\makeatletter \@namedef{subjclassname@2010}{  \textup{2010}
Mathematics Subject Classification}
\theoremstyle{plain}
\newtheorem{theorem}{Theorem}[section]
\newtheorem{corollary}[theorem]{Corollary}
\newtheorem{proposition}[theorem]{Proposition}
\newtheorem{lemma}[theorem]{Lemma}
\theoremstyle{remark}
\theoremstyle{definition}
\numberwithin{equation}{section}
\begin{document}
\title[CONVERGENCE THEOREMS]{SOME\ CONVERGENCE THEOREMS FOR\ OPERATOR\
SEQUENCES}
\author{HEYBETKULU MUSTAFAYEV}
\address{Institute of Mathematics and Mechanics of Academy Sciences of
Azerbaijan, BAKU-AZERBAIJAN}
\email{hsmustafayev@yahoo.com}
\keywords{Hilbert space, Banach space, compact operator, Toeplitz operator,
composition operator, model operator, local spectrum, operator sequence,
operator average, convergence.}
\subjclass[2010]{ 47A11, 47A35, 47B07; 47B35.}

\begin{abstract}
Let $A,$ $T$ and $B$ be bounded linear operators on a Banach space. This
paper is concerned mainly with finding some necessary and sufficient
conditions for convergence in operator norm of the sequences $\left\{
A^{n}TB^{n}\right\} $ and $\left\{ \frac{1}{n}\sum_{i=0}^{n-1}A^{i}TB^{i}%
\right\} $. These results are applied to the Toeplitz, composition and model
operators. Some related problems are also discussed.
\end{abstract}

\maketitle

\section{Introduction}

Throughout this paper, $H$ will denote a complex separable infinite
dimensional Hilbert space and $B\left( H\right) $, the algebra of all
bounded linear operators on $H.$ The ideal of all compact operators on $H$
will be denoted by $K\left( H\right) $. The quotient algebra $B\left(
H\right) \diagup K\left( H\right) $ is a $C^{\ast }-$algebra and called the
\textit{Calkin algebra}. As usual, $H^{2}$ will denote the classical Hardy
space on the open unit disk $\mathbb{D}:=\left\{ z\in
\mathbb{C}
:\left\vert z\right\vert <1\right\} .$ By $H^{\infty }$ we will denote the
space of all bounded analytic functions on $\mathbb{D}.$

Let $\mathbb{T}$:$=\partial \mathbb{D}$ be the unit circle and let $m$ be
the normalized Lebesgue measure on $\mathbb{T}$. Recall that for a given
symbol $\varphi \in L^{\infty }:=L^{\infty }\left( \mathbb{T},m\right) $,
the \textit{Toeplitz operator} $T_{\varphi }$ on $H^{2}$ is defined by%
\begin{equation*}
T_{\varphi }f=P_{+}\left( \varphi f\right) ,\text{ \ }f\in H^{2},
\end{equation*}%
where $P_{+}$ is the orthogonal projection from $L^{2}\left( \mathbb{T}%
,m\right) $ onto $H^{2}$. Let
\begin{equation*}
Sf\left( z\right) =zf\left( z\right)
\end{equation*}%
be the unilateral shift operator on $H^{2}.$ According to a theorem of Brown
and Halmos \cite{3}, $T\in B\left( H^{2}\right) $ is a Toeplitz operator if
and only if
\begin{equation*}
S^{\ast }TS=T.
\end{equation*}%
Barria and Halmos \cite{1} examined the so-called \textit{strongly
asymptotically Toeplitz operators} $T$ on $H^{2}$ for which the sequence $%
\left\{ S^{\ast n}TS^{n}\right\} $ converges strongly. This class includes
the Hankel algebra, the operator norm-closed algebra generated by all
Toeplitz and Hankel operators together \cite{1}.

An operator $T\in B\left( H^{2}\right) $ is said to be\textit{\ uniformly
asymptotically Toeplitz }if the sequence $\left\{ S^{\ast n}TS^{n}\right\} $
converges in the uniform operator topology. This class of operators is
closed in operator norm and under adjoints. It contains both Toeplitz
operators and the compact ones. Feintuch \cite{8} proved that an operator $%
T\in B\left( H^{2}\right) $ is uniformly asymptotically Toeplitz if and only
if it has the decomposition
\begin{equation*}
T=T_{0}+K,
\end{equation*}%
where $T_{0}$ is a Toeplitz operator, that is, $S^{\ast }T_{0}S=T_{0}$ and $K
$ is a compact operator.

Recall that each holomorphic function $\phi :\mathbb{D\rightarrow D}$
induces a bounded linear composition operator $C_{\phi }$ on $H^{2}$ by $%
C_{\phi }f=f\circ \phi $ (for instance, see \cite[Ch.5]{16}). The only
composition operator, which is also Toeplitz, is the identity operator \cite%
{20}. Using Feintuch's result, Nazarov and Shapiro \cite[Theorem 1.1]{20}
proved that a composition operator on $H^{2}$ is uniformly asymptotically
Toeplitz if and only if it is either compact or the identity operator.

Let $B\left( X\right) $ be the algebra of all bounded linear operators on a
complex Banach space $X$ and let $A,$ $T$ and $B$ be in $B\left( X\right) .$
The main purpose of this paper is to find necessary and sufficient
conditions for convergence in operator norm of the sequences $\left\{
A^{n}TB^{n}\right\} $ and $\left\{ \frac{1}{n}\sum_{i=0}^{n-1}A^{i}TB^{i}%
\right\} $.

\section{The sequence $\left\{ A^{n}TB^{n}\right\} $}

In this section, we give some results concerning convergence in operator
norm of the sequence $\left\{ A^{n}TB^{n}\right\} $ for Hilbert space
operators.

Recall that an operator $T\in B\left( H\right) $ is said to be \textit{%
essentially isometric} (resp. \textit{essentially unitary}) if $I-T^{\ast
}T\in K\left( H\right) $ (resp. $I-T^{\ast }T\in K\left( H\right) $ and $%
I-TT^{\ast }\in K\left( H\right) $).

We have the following:

\begin{theorem}
Let $A$ and $B^{\ast }$ be two essentially isometric operators on $H$ such
that $\left\Vert A^{n}x\right\Vert \rightarrow 0$ and $\left\Vert B^{\ast
n}x\right\Vert \rightarrow 0,$ for all $x\in H$. If $T\in B\left( H\right) ,$
then the sequence $\left\{ A^{n}TB^{n}\right\} $ converges in operator norm
if and only if we have the decomposition
\begin{equation*}
T=T_{0}+K,
\end{equation*}%
where $AT_{0}B=T_{0}$ and $K\in K\left( H\right) .$
\end{theorem}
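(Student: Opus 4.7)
The plan is to prove the two implications separately: sufficiency by a density argument, and necessity by a calculation in the Calkin algebra $B(H)/K(H)$ with quotient map $\pi$.

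For sufficiency, I would assume $T = T_0 + K$ with $AT_0 B = T_0$ and $K \in K(H)$. Iterating the fixed-point identity gives $A^n T_0 B^n = T_0$, so $A^n T B^n - T_0 = A^n K B^n$, and the task reduces to showing $\|A^n K B^n\| \to 0$. Since $\{A^n x\}$ and $\{B^{\ast n}x\}$ are (norm-)pointwise null, the Banach--Steinhaus theorem yields $M := \sup_n \max\{\|A^n\|,\|B^n\|\}<\infty$. As finite-rank operators are norm-dense in $K(H)$, a standard $\varepsilon/3$-argument reduces the problem to the case $K = \sum_{i=1}^{r} \langle \cdot,e_i\rangle f_i$, for which
\[
\|A^n K B^n\| \leq \sum_{i=1}^{r} \|A^n f_i\|\,\|B^{\ast n}e_i\| \longrightarrow 0.
\]

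For necessity, suppose $\|A^n T B^n - T_0\|\to 0$. Multiplying by $A$ on the left and $B$ on the right and taking the limit yields $AT_0 B = T_0$ at once, hence $A^n T_0 B^n = T_0$ for every $n$. To establish $T - T_0 \in K(H)$ I would pass to the Calkin algebra: the essential-isometry hypotheses translate to $\pi(A)^\ast \pi(A) = I$ and $\pi(B)\pi(B)^\ast = I$, so $\pi(A)$ is an isometry and $\pi(B)$ a coisometry; in particular $\|\pi(A^{\ast n})\|\leq 1$ and $\|\pi(B^{\ast n})\|\leq 1$ for every $n$. These relations give, for every $n$,
\[
\pi(A^{\ast n} A^n T B^n B^{\ast n}) = \pi(T),\qquad \pi(A^{\ast n} T_0 B^{\ast n}) = \pi(T_0),
\]
the second identity using $A^n T_0 B^n = T_0$. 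Subtracting and estimating in the Calkin norm,
\[
\|\pi(T) - \pi(T_0)\| = \|\pi(A^{\ast n})\,\pi(A^n T B^n - T_0)\,\pi(B^{\ast n})\| \leq \|A^n T B^n - T_0\| \longrightarrow 0,
\]
whence $\pi(T) = \pi(T_0)$, i.e.\ $K := T - T_0 \in K(H)$.

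The main obstacle I anticipate is the Calkin-algebra bookkeeping in the necessity step: one must recognise that essential isometry of $A$ and of $B^\ast$ amounts precisely to $\pi(A)$ and $\pi(B)^\ast$ being honest isometries in the Calkin algebra, and this is simultaneously what lets the sandwich $A^{\ast n}\cdot (A^n T B^n)\cdot B^{\ast n}$ recover $\pi(T)$ modulo compacts and what keeps the $n$-dependent conjugating factors contractive, so that the operator-norm convergence $A^n T B^n \to T_0$ transfers to the Calkin level.
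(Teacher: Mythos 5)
Your proof is correct, and while the sufficiency half coincides with the paper's (reduce to finite-rank $K$ via norm-density and uniform boundedness of $\{\|A^n\|\|B^n\|\}$, then estimate $\sum_i\|A^nf_i\|\,\|B^{\ast n}e_i\|\to 0$), your necessity half takes a genuinely different route. The paper derives compactness of $T-T_0$ from a ``limit operator'' construction: it builds a Hilbert space $\widehat{H}$ out of weakly null sequences modulo a Banach limit, shows the map $T\mapsto\widehat{T}$ is a contractive $\ast$-homomorphism with $\|\widehat{T}\|=\|T+K(H)\|$ and $\widehat{T}=0$ iff $T$ is compact, checks that essential isometries lift to honest isometries $\widehat{A}$, $\widehat{B}^{\ast}$, and then concludes from $\|\widehat{T-T_0}\|=\|\widehat{A}^n(\widehat{T}-\widehat{T_0})\widehat{B}^n\|\leq\|A^n(T-T_0)B^n\|\to 0$. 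You instead work directly in the Calkin algebra: since $\pi(A)^{\ast}\pi(A)=I$ and $\pi(B)\pi(B)^{\ast}=I$, the sandwich $\pi(A^{\ast n})\,\pi(A^nTB^n-T_0)\,\pi(B^{\ast n})$ equals $\pi(T)-\pi(T_0)$ for every $n$ while being dominated in norm by $\|A^nTB^n-T_0\|$. This is the same underlying mechanism --- a contractive $\ast$-homomorphism whose kernel is exactly $K(H)$ and under which essential isometries become isometries --- but your version needs only the standard $C^{\ast}$-quotient and none of the Banach-limit machinery; it is shorter and more self-contained for this one theorem. What the paper's heavier construction buys is reuse: Proposition 2.2(d) (the identity $\|\widehat{T}\|=\|T+K(H)\|$) is invoked again for Theorem 2.7 and Proposition 2.10, so the author amortizes the setup cost, whereas your argument would have to be adapted separately there. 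One small bookkeeping point in your favor: you make explicit the Banach--Steinhaus step that the paper leaves implicit in Lemma 2.3(a) when passing from finite-rank to general compact $K$.
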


For the proof, we need some preliminary results.

Let $H_{0}$ be the linear space of all weakly null sequences $\left\{
x_{n}\right\} $ in $H.$ Let us define a semi-inner product in $H_{0}$ by
\begin{equation*}
\langle \left\{ x_{n}\right\} ,\left\{ y_{n}\right\} \rangle =\text{l.i.m.}%
_{n}\langle x_{n},y_{n}\rangle ,
\end{equation*}%
where l.i.m. is a fixed Banach limit. If
\begin{equation*}
E:=\left\{ \left\{ x_{n}\right\} \in H_{0}:\text{ l.i.m.}_{n}\left\Vert
x_{n}\right\Vert ^{2}=0\right\} ,
\end{equation*}%
then $H_{0}\diagup E$ becomes a pre-Hilbert \ space with respect to the
inner product defined by
\begin{equation*}
\langle \left\{ x_{n}\right\} +E,\text{ }\left\{ y_{n}\right\} +E\rangle =%
\text{l.i.m.}_{n}\langle \left\{ x_{n}\right\} ,\left\{ y_{n}\right\}
\rangle .
\end{equation*}%
Let $\widehat{H}$ be the Hilbert space defined by the completion of $%
H_{0}\diagup E$ with respect to the induced norm
\begin{equation*}
\left\Vert \left\{ x_{n}\right\} +E\right\Vert =\left( \text{l.i.m.}%
_{n}\left\Vert x_{n}\right\Vert ^{2}\right) ^{\frac{1}{2}}.
\end{equation*}%
Now, for a given $T\in B\left( H\right) $ we define an operator $\widehat{T}$
on $H_{0}\diagup E$ by
\begin{equation*}
\widehat{T}:\left\{ x_{n}\right\} +E\mapsto \left\{ Tx_{n}\right\} +E.
\end{equation*}%
Consequently, we can write%
\begin{eqnarray*}
\left\Vert \widehat{T}\left( \left\{ x_{n}\right\} +E\right) \right\Vert
&=&\left( \text{l.i.m.}_{n}\left\Vert Tx_{n}\right\Vert ^{2}\right) ^{\frac{1%
}{2}} \\
&\leq &\left\Vert T\right\Vert \left( \text{l.i.m.}_{n}\left\Vert
x_{n}\right\Vert ^{2}\right) ^{\frac{1}{2}} \\
&=&\left\Vert T\right\Vert \left\Vert \left\{ x_{n}\right\} +E\right\Vert .
\end{eqnarray*}%
Since $H_{0}\diagup E$ is dense in $\widehat{H},$ the operator $\widehat{T}$
can be extended to the whole $\widehat{H}$ which we also denote by $\widehat{%
T}$. Clearly, $\left\Vert \widehat{T}\right\Vert \leq \left\Vert
T\right\Vert .$ The operator $\widehat{T}$ will be called \textit{limit
operator associated with} $T.$

\begin{proposition}
If $\widehat{T}$ is the limit operator associated with $T\in B\left(
H\right) $, then:

$\left( a\right) $ The map $T\mapsto \widehat{T}$ is a linear contractive $%
\ast -$homomorphism.

$\left( b\right) $ $T$ is a compact operator if and only if $\widehat{T}=0.$

$\left( c\right) $ $T$ is an \textit{essentially isometry }$($resp. \textit{%
essentially }unitary$)$\textit{\ }if and only if $\widehat{T}$ is an
isometry $($resp. unitary$).$

$\left( d\right) $ For an arbitrary $T\in B\left( H\right) ,$ we have $%
\left\Vert \widehat{T}\right\Vert =\left\Vert T+K\left( H\right) \right\Vert
.$
\end{proposition}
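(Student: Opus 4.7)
The plan is to dispatch (a)--(c) as essentially formal consequences of the construction and to save the substantive work for (d).

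For (a), linearity and contractivity are already built into the construction; only multiplicativity and the involution require checking. Multiplicativity is immediate from $\widehat{ST}(\{x_{n}\}+E)=\{STx_{n}\}+E=\widehat{S}\widehat{T}(\{x_{n}\}+E)$. For the $\ast$-property, one computes
\[
\langle \widehat{T}\widehat{x},\widehat{y}\rangle =\text{l.i.m.}_{n}\langle Tx_{n},y_{n}\rangle =\text{l.i.m.}_{n}\langle x_{n},T^{\ast }y_{n}\rangle =\langle \widehat{x},\widehat{T^{\ast }}\widehat{y}\rangle ,
\]
so $\widehat{T^{\ast }}=\widehat{T}^{\ast }$, and the identity $\widehat{I}=I_{\widehat{H}}$ is evident from the definition.

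For (b), the forward direction uses the familiar fact that a compact operator sends weakly null sequences to norm null sequences, so $\text{l.i.m.}_n\|Tx_n\|^2=0$ and $\widehat{T}\widehat{x}=0$ for every $\widehat{x}$. For the converse, if $T$ is not compact, pick a bounded sequence $\{y_n\}$ for which $\{Ty_n\}$ has no norm Cauchy subsequence; pass to a subsequence $y_{n_k}\rightharpoonup y$ and set $x_k=y_{n_k}-y$, so $\{x_k\}$ is bounded and weakly null, and after a further subsequence $\|Tx_k\|\geq \varepsilon >0$. The class $\widehat{x}=\{x_k\}+E$ then satisfies $\|\widehat{T}\widehat{x}\|\geq \varepsilon >0$, so $\widehat{T}\neq 0$. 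Part (c) is a direct consequence of (a) and (b): $I-T^{\ast }T\in K(H)$ is equivalent by (b) to $\widehat{I-T^{\ast }T}=0$, and by (a) to $I_{\widehat{H}}=\widehat{T}^{\ast }\widehat{T}$, i.e., $\widehat{T}$ is an isometry; the essentially unitary case is handled symmetrically with $I-TT^{\ast }$.

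The substantive part is (d). The easy inequality $\|\widehat{T}\|\leq \|T+K(H)\|$ follows because, for every compact $K$, part (b) gives $\widehat{T}=\widehat{T+K}$ and hence $\|\widehat{T}\|\leq \|T+K\|$; taking the infimum over $K\in K(H)$ yields the bound. The main obstacle is the reverse inequality $\|\widehat{T}\|\geq \|T+K(H)\|$. My plan is to build an orthonormal sequence $\{e_{n}\}\subset H$ on which $\|Te_{n}\|$ is nearly the essential norm. Fix $\varepsilon >0$; having chosen orthonormal $e_{1},\ldots ,e_{n}$ with span-projection $P_{n}$, observe that $TP_{n}$ is finite rank and compact, so
\[
\|T(I-P_{n})\|=\|T-TP_{n}\|\geq \|T+K(H)\|,
\]
permitting a choice of $e_{n+1}$ in the unit sphere of $(I-P_{n})H$ with $\|Te_{n+1}\|\geq \|T+K(H)\|-\varepsilon $. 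Since orthonormal sequences are weakly null and $\text{l.i.m.}_{n}\|e_{n}\|^{2}=1$, the class $\widehat{e}=\{e_{n}\}+E$ is a unit vector in $\widehat{H}$ with $\|\widehat{T}\widehat{e}\|^{2}=\text{l.i.m.}_{n}\|Te_{n}\|^{2}\geq (\|T+K(H)\|-\varepsilon )^{2}$; letting $\varepsilon \downarrow 0$ completes the proof.
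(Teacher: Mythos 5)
Your proposal is correct, and it differs from the paper's treatment in the one place where the paper actually does work, namely part (d). For the reverse inequality $\Vert \widehat{T}\Vert \geq \Vert T+K(H)\Vert$ the paper does not construct anything: it quotes the known formula (from Beauzamy) expressing the essential norm as $\sup\{\overline{\lim}_{n}\Vert Tx_{n}\Vert\}$ over weakly null unit sequences, extracts a subsequence along which the $\limsup$ is attained as a limit, and feeds that subsequence into $\widehat{H}$. You instead prove the needed half of that formula from scratch: the observation that $TP_{n}$ is compact forces $\Vert T(I-P_{n})\Vert \geq \Vert T+K(H)\Vert$, which lets you build an orthonormal (hence weakly null) sequence $\{e_{n}\}$ with $\Vert Te_{n}\Vert \geq \Vert T+K(H)\Vert -\varepsilon$ for every $n$. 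This is self-contained and even slightly cleaner, since each term of the sequence satisfies the lower bound and no subsequence extraction is needed; the paper's route is shorter only because it outsources the construction to a citation. The first inequality $\Vert \widehat{T}\Vert \leq \Vert T+K(H)\Vert$ is handled identically in both (via $\widehat{K}=0$ for compact $K$). You also supply the proofs of (a)--(c), which the paper declares ``clear'' and omits; your converse in (b) (extracting a weakly null sequence $x_{k}=y_{n_{k}}-y$ with $\Vert Tx_{k}\Vert$ bounded below when $T$ is not compact) and the reduction of (c) to (a) and (b) are exactly the intended arguments and are sound. The only cosmetic quibble is that in the last display one should assume $\varepsilon <\Vert T+K(H)\Vert$ before squaring, but this costs nothing.
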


\begin{proof}
Proofs of the assertions (a), (b) and (c) are omitted, since they are clear.
Let us prove (d). Let $\widehat{K}$ be the limit operator associated with $%
K\in K\left( H\right) .$ Since $\widehat{K}=0,$ we get
\begin{equation*}
\left\Vert \widehat{T}\right\Vert =\left\Vert \widehat{T}+\widehat{K}%
\right\Vert \leq \left\Vert T+K\right\Vert \text{, \ }\forall K\in K\left(
H\right) .
\end{equation*}%
This implies $\left\Vert \widehat{T}\right\Vert \leq \left\Vert T+K\left(
H\right) \right\Vert .$ For the reverse inequality, recall \cite[p.94]{2}
that%
\begin{equation*}
\left\Vert T+K\left( H\right) \right\Vert =\sup \left\{ \overline{%
\lim_{n\rightarrow \infty }}\left\Vert Tx_{n}\right\Vert :\left\Vert
x_{n}\right\Vert =1,\text{ }\forall n\in
\mathbb{N}
\text{ and }x_{n}\rightarrow 0\text{ weakly}\right\} .
\end{equation*}%
Therefore, for a given $\varepsilon >0$ there exists a sequence $\left\{
x_{n}\right\} $ in $H$ such that $\left\Vert x_{n}\right\Vert =1$ $\left(
\forall n\in
\mathbb{N}
\right) ,$ $x_{n}\rightarrow 0$ weakly and
\begin{equation*}
\overline{\lim_{n\rightarrow \infty }}\left\Vert Tx_{n}\right\Vert \geq
\left\Vert T+K\left( H\right) \right\Vert -\varepsilon .
\end{equation*}%
Consequently, there exists a subsequence $\left\{ x_{n_{k}}\right\} $ of $%
\left\{ x_{n}\right\} $ such that
\begin{equation*}
\lim_{k\rightarrow \infty }\left\Vert Tx_{n_{k}}\right\Vert \geq \left\Vert
T+K\left( H\right) \right\Vert -\varepsilon .
\end{equation*}%
On the other hand,
\begin{equation*}
\left\Vert \widehat{T}\right\Vert =\sup \left\{ \left( \text{l.i.m.}%
_{n}\left\Vert Tx_{n}\right\Vert ^{2}\right) ^{\frac{1}{2}}:\text{l.i.m.}%
_{n}\left\Vert x_{n}\right\Vert ^{2}=1\text{ and }x_{n}\rightarrow 0\text{
weakly}\right\} .
\end{equation*}%
As l.i.m.$_{k}\left\Vert x_{n_{k}}\right\Vert ^{2}=1$ and $%
x_{n_{k}}\rightarrow 0$ $\left( k\rightarrow \infty \right) $ weakly, by the
preceding identity we get
\begin{equation*}
\left\Vert \widehat{T}\right\Vert \geq \lim_{k\rightarrow \infty }\left\Vert
Tx_{n_{k}}\right\Vert \geq \left\Vert T+K\left( H\right) \right\Vert
-\varepsilon .
\end{equation*}%
Since $\varepsilon $ is arbitrary, we have $\left\Vert \widehat{T}%
\right\Vert \geq \left\Vert T+K\left( H\right) \right\Vert ,$ as required.
\end{proof}

\begin{lemma}
$\left( a\right) $ Let $A,B\in B\left( H\right) $ and assume that $%
\left\Vert A^{n}x\right\Vert \rightarrow 0$ and $\left\Vert B^{\ast
n}x\right\Vert \rightarrow 0,$ for all $x\in H.$ Then, for an arbitrary $%
K\in K\left( H\right) ,$ we have%
\begin{equation*}
\lim_{n\rightarrow \infty }\left\Vert A^{n}KB^{n}\right\Vert =0.
\end{equation*}

$\left( b\right) $ If $A$ and $B^{\ast }$ are essentially isometric
operators and
\begin{equation*}
\lim_{n\rightarrow \infty }\left\Vert A^{n}TB^{n}\right\Vert =0,
\end{equation*}%
then $T$ is a compact operator.
\end{lemma}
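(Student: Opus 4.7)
The plan is to treat the two parts separately; both rely on rank-one/compact-approximation ideas, but part (b) will crucially exploit the limit-operator construction introduced just before the preceding Proposition.

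For part (a), I would first use the Banach--Steinhaus theorem: pointwise convergence $A^{n}x\to 0$ and $B^{*n}x\to 0$ imply that
$M:=\sup_{n}\|A^{n}\|<\infty$ and $N:=\sup_{n}\|B^{n}\|=\sup_{n}\|B^{*n}\|<\infty$.
I would then reduce to rank-one operators $Kz=\langle z,y\rangle x$, for which the direct identity
\[
A^{n}KB^{n}z \;=\;\langle z,B^{*n}y\rangle\,A^{n}x
\]
yields $\|A^{n}KB^{n}\|\le\|A^{n}x\|\,\|B^{*n}y\|\to 0$. Linearity extends this to all finite-rank $K$. For general $K\in K(H)$, I would pick a finite-rank $K_{\varepsilon}$ with $\|K-K_{\varepsilon}\|<\varepsilon$ and use
\[
\|A^{n}KB^{n}\|\le\|A^{n}K_{\varepsilon}B^{n}\|+\varepsilon MN;
\]
taking $\limsup_{n}$ and then $\varepsilon\to 0$ concludes the proof.

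For part (b), I would transfer the hypothesis to the limit-operator side. Since $T\mapsto\widehat{T}$ is a contractive $*$-homomorphism, by the Proposition we have
\[
\|\widehat{A}^{\,n}\,\widehat{T}\,\widehat{B}^{\,n}\|=\|\widehat{A^{n}TB^{n}}\|\le\|A^{n}TB^{n}\|\longrightarrow 0.
\]
Essential isometry of $A$ gives that $\widehat{A}$ is an isometry, so $\widehat{A}^{*n}\widehat{A}^{\,n}=I$; multiplying on the left by $\widehat{A}^{*n}$ yields $\widehat{T}\,\widehat{B}^{\,n}\to 0$. Essential isometry of $B^{*}$ gives that $\widehat{B^{*}}=\widehat{B}^{*}$ is an isometry, i.e.\ $\widehat{B}\widehat{B}^{*}=I$, so a short induction on $n$ collapsing the middle factors yields $\widehat{B}^{\,n}(\widehat{B}^{*})^{n}=I$ with $\|(\widehat{B}^{*})^{n}\|=1$. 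Multiplying the relation $\widehat{T}\widehat{B}^{\,n}\to 0$ on the right by $(\widehat{B}^{*})^{n}$ gives
\[
\widehat{T}=\widehat{T}\,\widehat{B}^{\,n}(\widehat{B}^{*})^{n}\longrightarrow 0,
\]
so $\widehat{T}=0$, and the Proposition forces $T\in K(H)$.

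I expect no substantial obstacle: part (a) is a textbook compact-approximation argument, and part (b) is essentially bookkeeping once one notices that the isometry/coisometry structure of $\widehat{A}$ and $\widehat{B}$ furnishes one-sided inverses on both sides of $\widehat{T}$. The only mildly delicate point is the inductive identity $\widehat{B}^{\,n}(\widehat{B}^{*})^{n}=I$, which does not follow by formal manipulation in a non-commutative setting and must be derived by peeling off one pair $\widehat{B}\widehat{B}^{*}=I$ from the middle at a time.
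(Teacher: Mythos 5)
Your proposal is correct and follows essentially the same route as the paper: part (a) is the same finite-rank/tensor-product approximation (you merely make explicit the Banach--Steinhaus bound that the paper leaves implicit), and part (b) uses the same limit-operator transfer, with your one-sided-inverse manipulations $\widehat{A}^{*n}\widehat{A}^{n}=I$ and $\widehat{B}^{n}(\widehat{B}^{*})^{n}=I$ just being a more explicit derivation of the paper's direct assertion that $\|\widehat{T}\|=\|\widehat{A}^{n}\widehat{T}\widehat{B}^{n}\|$ when $\widehat{A}$ is an isometry and $\widehat{B}$ a co-isometry.
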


\begin{proof}
(a) For an arbitrary $x,y\in H,$ let $x\otimes y$ be the rank one operator
on $H$;%
\begin{equation*}
x\otimes y:z\mapsto \langle z,y\rangle x,\text{ \ }z\in H.
\end{equation*}%
Since finite rank operators are dense (in operator norm) in $K\left(
H\right) $, we may assume that $K$ is a finite rank operator, say,%
\begin{equation*}
K=\sum_{i=1}^{N}x_{i}\otimes y_{i},
\end{equation*}%
where $x_{i},y_{i}\in H$ $\left( i=1,...,N\right) .$ Consequently, we can
write%
\begin{equation*}
\left\Vert A^{n}KB^{n}\right\Vert =\left\Vert
\sum_{i=1}^{N}A^{n}x_{i}\otimes B^{\ast n}y_{i}\right\Vert \leq
\sum_{i=1}^{N}\left\Vert A^{n}x_{i}\right\Vert \left\Vert B^{\ast
n}y_{i}\right\Vert \rightarrow 0\text{ \ }\left( n\rightarrow \infty \right)
.
\end{equation*}

(b) Let $\widehat{A}$, $\widehat{T}$ and $\widehat{B}$ be the limit
operators associated with $A$, $T$ and $B,$ respectively. By Proposition 2.2$%
,$ $\widehat{A}$ and $\widehat{B}^{\ast }$ are isometries. Since the map $%
T\mapsto \widehat{T}$ is a contractive homomorphism, for an arbitrary $n\in
\mathbb{N}
$ we get
\begin{equation*}
\left\Vert \widehat{T}\right\Vert =\left\Vert \widehat{A}^{n}\widehat{T}%
\widehat{B}^{n}\right\Vert \leq \left\Vert A^{n}TB^{n}\right\Vert
\rightarrow 0.
\end{equation*}%
Hence $\widehat{T}=0.$ By Proposition 2.2, $T$ is a compact operator.
\end{proof}

We are now in a position to prove Theorem 2.1.

\begin{proof}[Proof of Theorem 2.1]
If $T=T_{0}+K,$ where $AT_{0}B=T_{0}$ and $K\in K\left( H\right) ,$ then
\begin{equation*}
A^{n}TB^{n}=T_{0}+A^{n}KB^{n}\text{, \ }\forall n\in
\mathbb{N}
.
\end{equation*}%
By Lemma 2.3, $\left\Vert A^{n}KB^{n}\right\Vert \rightarrow 0$ and
therefore $\left\Vert A^{n}TB^{n}-T_{0}\right\Vert \rightarrow 0.$ Now,
assume that there exists $T_{0}\in B\left( H\right) $ such that $\left\Vert
A^{n}TB^{n}-T_{0}\right\Vert \rightarrow 0.$\ Since
\begin{equation*}
\left\Vert A^{n+1}TB^{n+1}-AT_{0}B\right\Vert \rightarrow 0,
\end{equation*}%
we have $AT_{0}B=T_{0}$ which implies $A^{n}T_{0}B^{n}=T_{0}$ for all $n\in
\mathbb{N}
.$ Also, since
\begin{equation*}
\left\Vert A^{n}(T-T_{0})B^{n}\right\Vert \rightarrow 0,
\end{equation*}%
by Lemma 2.3, $T-T_{0}$ is a compact operator. So we have $T=T_{0}+K,$ where
$K\in K\left( H\right) .$
\end{proof}

As a consequence of Theorem 2.1 we have the following:

\begin{corollary}
Let $A\in B\left( H\right) $ and assume that $I-AA^{\ast }\in K\left(
H\right) $ and $\left\Vert A^{\ast n}x\right\Vert \rightarrow 0$ for all $%
x\in H.$ If $T\in B\left( H\right) ,$ then the sequence $\left\{ A^{\ast
n}TA^{n}\right\} $ converges in operator norm if and only if we have the
decomposition $T=T_{0}+K,$ where $A^{\ast }T_{0}A=T_{0}$ and $K\in K\left(
H\right) .$
\end{corollary}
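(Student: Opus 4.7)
The plan is to derive Corollary 2.5 as a direct specialization of Theorem 2.1, applying that theorem with the substitutions $A \mapsto A^{\ast}$ and $B \mapsto A$. Under these substitutions, the sequence $\{A^{n}TB^{n}\}$ in Theorem 2.1 becomes precisely $\{A^{\ast n}TA^{n}\}$, and the invariance relation $AT_{0}B=T_{0}$ becomes $A^{\ast}T_{0}A=T_{0}$, which matches the conclusion we want. So the whole task reduces to checking that the hypotheses of Theorem 2.1 are satisfied for the substituted operators.

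First I would verify the essential-isometry conditions. The new $A$ is $A^{\ast}$, which must be essentially isometric, meaning $I-(A^{\ast})^{\ast}A^{\ast} = I-AA^{\ast} \in K(H)$; this is given. The new $B$ is $A$, so its adjoint $B^{\ast}=A^{\ast}$ must also be essentially isometric, meaning $I-(A^{\ast})^{\ast}A^{\ast} = I-AA^{\ast}\in K(H)$, which again is the hypothesis. Next, I would check the asymptotic vanishing: Theorem 2.1 requires $\|(A^{\ast})^{n}x\|\to 0$ (playing the role of $\|A^{n}x\|\to 0$) and $\|(A^{\ast})^{n}x\|\to 0$ (playing the role of $\|B^{\ast n}x\|\to 0$, since $B=A$). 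Both of these are exactly the hypothesis $\|A^{\ast n}x\|\to 0$ assumed in the corollary.

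With the hypotheses confirmed, Theorem 2.1 applies and yields the stated equivalence. There is no real obstacle here beyond bookkeeping: the only subtlety is that the single operator $A^{\ast}$ is simultaneously playing the role of Theorem 2.1's $A$ and of $B^{\ast}$, so one must be careful to interpret the essential-isometry assumption correctly in both roles; once this is done, no further work is required.
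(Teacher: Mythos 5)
Your proposal is correct and is exactly the paper's (implicit) argument: the corollary is stated as an immediate consequence of Theorem 2.1 under the substitution $A\mapsto A^{\ast}$, $B\mapsto A$, and your verification that both essential-isometry conditions reduce to $I-AA^{\ast}\in K(H)$ and both vanishing conditions reduce to $\left\Vert A^{\ast n}x\right\Vert \rightarrow 0$ is precisely the required bookkeeping.
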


If $S$ is the unilateral shift on $H^{2}$, then the operator $I-SS^{\ast }$
is one dimensional and $\left\Vert S^{\ast n}f\right\Vert \rightarrow 0$ for
all $f\in H^{2}.$ By taking $A=S$ in Corollary 2.4, we obtain Feintuch's
result mentioned above.

Let an arbitrary $\varphi ,\psi \in L^{\infty }$ be given. As we have noted
in the Introduction, $T_{\varphi }T_{\psi }$ is a strongly asymptotically
Toeplitz operator, that is, $S^{\ast n}T_{\varphi }T_{\psi }S^{n}\rightarrow
T_{\varphi \psi }$ strongly \cite[Theorem 4]{1}. From this and from
Corollary 2.4 it follows that $T_{\varphi }T_{\psi }$ is a uniformly
asymptotically Toeplitz operator if and only if $T_{\varphi }T_{\psi }$ is a
compact perturbation of the Toeplitz operator $T_{\varphi \psi }.$ Now,
assume that one of the functions $\varphi ,\psi $ is a trigonometric
polynomial, say, $\psi =\sum_{-N}^{N}c_{k}e^{ik\theta }.$ Then as
\begin{equation*}
T_{\psi }=\sum_{k=1}^{N}c_{-k}S^{\ast k}+\sum_{k=0}^{N}c_{k}S^{k},
\end{equation*}%
$S^{\ast n}T_{\varphi }S^{\ast k}S^{n}=S^{\ast k}T_{\varphi }$ $\left(
\forall n\geq k\right) $ and $S^{\ast n}T_{\varphi }S^{k}S^{n}=T_{\varphi
}S^{k}$ $\left( \forall k\geq 0\right) ,$ we have%
\begin{equation*}
S^{\ast n}T_{\varphi }T_{\psi }S^{n}=\sum_{k=1}^{N}c_{-k}S^{\ast
k}T_{\varphi }+\sum_{k=0}^{N}c_{k}T_{\varphi }S^{k},\text{ \ }\forall n\geq
N.
\end{equation*}%
If $\varphi =\sum_{-N}^{N}c_{k}e^{ik\theta },$ then as $S^{\ast n}S^{\ast
k}T_{\psi }S^{n}=S^{\ast k}T_{\psi }$ $\left( \forall k\geq 0\right) $ and $%
S^{\ast n}S^{k}T_{\psi }S^{n}=T_{\psi }S^{k}$ $\left( \forall n\geq k\right)
,$ we have
\begin{equation*}
S^{\ast n}T_{\varphi }T_{\psi }S^{n}=\sum_{k=1}^{N}c_{-k}S^{\ast k}T_{\psi
}+\sum_{k=0}^{N}c_{k}T_{\psi }S^{k},\text{ \ }\forall n\geq N.
\end{equation*}%
Therefore, if one of the functions $\varphi ,\psi $ is continuous$,$ then $%
T_{\varphi }T_{\psi }$ is a uniformly asymptotically Toeplitz operator.
Further, if $\psi $ has the form $\psi =h+f$, where $h\in H^{\infty }$ and $%
f\in C\left( \mathbb{T}\right) ,$ then as $T_{\varphi }T_{h}=T_{\varphi h}$
we get%
\begin{eqnarray*}
S^{\ast n}T_{\varphi }T_{\psi }S^{n} &=&S^{\ast n}T_{\varphi }\left(
T_{h}+T_{f}\right) S^{n} \\
&=&S^{\ast n}T_{\varphi h}S^{n}+S^{\ast n}T_{\varphi }T_{f}S^{n} \\
&=&T_{\varphi h}+S^{\ast n}T_{\varphi }T_{f}S^{n}.
\end{eqnarray*}%
It follows that $T_{\varphi }T_{\psi }$ is a uniformly asymptotically
Toeplitz operator for all $\varphi \in L^{\infty }$ and $\psi \in H^{\infty
}+C\left( \mathbb{T}\right) $ (recall that the algebraic sum $H^{\infty
}+C\left( \mathbb{T}\right) $ is a uniformly closed subalgebra of $L^{\infty
}$ and sometimes called a \textit{Douglas algebra}). Consequently, $%
T_{\varphi }T_{\psi }$ is a compact perturbation of the Toeplitz operator $%
T_{\varphi \psi }$ for all $\varphi \in L^{\infty }$ and $\psi \in H^{\infty
}+C\left( \mathbb{T}\right) .$ Similarly, we can see that if $\varphi $ has
the form $\varphi =\overline{h}+f$, where $h\in H^{\infty }$ and $f\in
C\left( \mathbb{T}\right) ,$ then $T_{\varphi }T_{\psi }$ is a uniformly
asymptotically Toeplitz operator.

Note that in Corollary 2.4, compactness condition of the operator $%
I-AA^{\ast }$ is essential. To see this, let $A=V$ be the Volterra integral
operator on $H=L^{2}\left[ 0,1\right] .$ Then, $I-VV^{\ast }\notin K\left(
H\right) $ and as $\left\Vert V^{n}\right\Vert \rightarrow 0,$ we have $%
\left\Vert V^{\ast n}x\right\Vert \rightarrow 0$ for all $x\in H.$ Since $%
\left\Vert V^{\ast n}TV^{n}\right\Vert \rightarrow 0$ for all $T\in B\left(
H\right) ,$ the equation $V^{\ast }T_{0}V=T_{0}$ has only zero solution. If
the conclusion of Corollary 2.4 were true, we would get $B\left( H\right)
\subseteq K\left( H\right) ,$ which is a contradiction.

Let $H^{2}\left( E\right) $ be the Hardy space of all analytic functions on $%
\mathbb{D}$ with values in a Hilbert space $E.$ Let $A\in B\left( H\right) $
be a contraction, $E:=\overline{\left( I-AA^{\ast }\right) H}$ and assume
that $\left\Vert A^{\ast n}x\right\Vert \rightarrow 0$ for all $x\in H.$ By
the Model Theorem of Nagy-Foia\c{s} (see, \cite[Ch.VI, Theorem 2.3]{19} and
\cite{21}), $A$ is unitary equivalent to its model operator
\begin{equation*}
A_{\Theta }f:=P_{\mathcal{K}}S_{E}f,\text{ \ }f\in \mathcal{K},
\end{equation*}%
where $\mathcal{K}=H^{2}\left( E\right) \ominus \Theta H^{2}\left( F\right)
, $ $F$ is a subspace of $E,$ $\Theta $ is a bounded analytic function on $%
\mathbb{D}$ with values in $B\left( F,E\right) ,$ the space of all bounded
linear operators from $F$ into $E$ ($\Theta \left( \xi \right) $ is an
isometry for almost all $\xi \in \mathbb{T}$)$,$ $P_{\mathcal{K}}$ is the
orthogonal projection from $H^{2}\left( E\right) $ onto $\mathcal{K}$ and $%
S_{E}$ is the unilateral shift operator on $H^{2}\left( E\right) .$ Notice
also that $A_{\Theta }^{\ast }=S_{E}^{\ast }\mid _{\mathcal{K}}$.
Consequently, Corollary 2.4 can be applied to the model operator $A_{\Theta
} $ in the case when the operator $A$ satisfies the following conditions: 1)
$A $ is a contraction; 2) $\left\Vert A^{\ast n}x\right\Vert \rightarrow 0$
for all $x\in H;$ 3) The defect operator $\mathcal{D}_{A^{\ast
}}:=(I-AA^{\ast })^{\frac{1}{2}}$ is compact.

In addition, assume that $\left\Vert A^{n}x\right\Vert \rightarrow 0$ for
all $x\in H.$ In this case, the subspace $E$ can be identified with $F$ and $%
\Theta \left( \xi \right) $ becomes unitary for almost all $\xi \in \mathbb{T%
}$. Consequently, Proposition 2.5 (shown below) is applicable to the model
operator $A_{\Theta }$ in the case when the operator $A$ satisfies the
following conditions: 1) $A$ is a contraction; 2) $\left\Vert
A^{n}x\right\Vert \rightarrow 0$ and $\left\Vert A^{\ast n}x\right\Vert
\rightarrow 0$ for all $x\in H;$ 3) the defect operator $\mathcal{D}%
_{A^{\ast }}$ is compact.

\begin{proposition}
Let $A\in B\left( H\right) $ and assume that $I-AA^{\ast }\in K\left(
H\right) ,$ $\left\Vert A^{n}x\right\Vert \rightarrow 0$ and $\left\Vert
A^{\ast n}x\right\Vert \rightarrow 0$ for all $x\in H.$ For an arbitrary $%
T\in B\left( H\right) ,$ the following assertions are equivalent:

$\left( a\right) $ The sequence $\left\{ A^{\ast n}TA^{n}\right\} $
converges in operator norm.

$\left( b\right) $ $A^{\ast n}TA^{n}\rightarrow 0$ in operator norm.

$\left( c\right) $ $T$ is a compact operator.
\end{proposition}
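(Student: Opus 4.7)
The plan is to prove the cycle (a)$\Rightarrow$(b)$\Rightarrow$(c)$\Rightarrow$(b), with (b)$\Rightarrow$(a) trivial. The heart of the argument is showing that the hypothesis $\|A^n x\| \to 0$ for every $x$ (an ingredient absent from Corollary 2.4) rules out nonzero fixed points of the map $T_0 \mapsto A^* T_0 A$, which collapses the general limit produced by Theorem 2.1 to zero.

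For (c)$\Rightarrow$(b), I would apply Lemma 2.3(a) with $A^*$ playing the role of ``$A$'' in that lemma and $A$ playing the role of ``$B$''. The required hypotheses then read $\|(A^*)^n x\|\to 0$ and $\|(A^*)^n x\|\to 0$, both of which are given, so $\|A^{*n}TA^n\|\to 0$ whenever $T$ is compact.

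For (a)$\Rightarrow$(b), assume $A^{*n}TA^n \to T_0$ in operator norm. Pre- and post-multiplying by $A^*$ and $A$, one sees $A^{*(n+1)}TA^{n+1} = A^*(A^{*n}TA^n)A$ tends both to $T_0$ and to $A^*T_0 A$, so
\begin{equation*}
A^* T_0 A = T_0, \qquad \text{hence } A^{*n}T_0 A^n = T_0 \ \text{for every } n.
\end{equation*}
Taking inner products, for any $x,y \in H$,
\begin{equation*}
\langle T_0 x, y\rangle = \langle A^{*n}T_0 A^n x, y\rangle = \langle T_0 A^n x, A^n y\rangle,
\end{equation*}
and since $\|A^n x\|\to 0$ and $\|T_0\|<\infty$, the right-hand side tends to $0$. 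Thus $T_0 = 0$, giving (b).

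Finally (b)$\Rightarrow$(c) comes from Lemma 2.3(b), applied again with the roles of $A,B$ taken by $A^*, A$: essential isometry of ``$A$''$\,=\,A^*$ requires $I-AA^*\in K(H)$, and essential isometry of ``$B^*$''$\,=\,A^*$ requires the same, both supplied by the hypothesis. The main conceptual point — and the only place where the extra assumption $\|A^n x\|\to 0$ is consumed — is the step forcing $T_0=0$; everything else is a direct invocation of Lemma 2.3 after matching the parameters correctly, so I do not anticipate any real obstacle.
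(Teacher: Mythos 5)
Your proof is correct and follows essentially the same strategy as the paper's: the implications involving compactness are handled by the two parts of Lemma 2.3 with the roles of $A,B$ played by $A^{\ast},A$ (your bookkeeping of which hypotheses are consumed where is accurate), and the limit $T_{0}$ is annihilated by the estimate $\left\vert \langle T_{0}x,y\rangle \right\vert \leq \left\Vert T_{0}\right\Vert \left\Vert A^{n}x\right\Vert \left\Vert A^{n}y\right\Vert \rightarrow 0$, which is word for word the paper's key step. The only difference is in (a)$\Rightarrow$(b): the paper first invokes Corollary 2.4 to obtain the decomposition $T=T_{0}+K$ with $A^{\ast}T_{0}A=T_{0}$ and then disposes of $K$ via Lemma 2.3, whereas you extract the relation $A^{\ast}T_{0}A=T_{0}$ directly from the norm convergence of $A^{\ast n}TA^{n}$ by passing to the shifted sequence. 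Your route is slightly more economical, since it does not route through Theorem 2.1 and the limit-operator machinery behind it, but the conceptual content is identical.
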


\begin{proof}
(a)$\Rightarrow $(b) By Corollary 2.4, $T=T_{0}+K$, where $A^{\ast
}T_{0}A=T_{0}$ and $K\in K\left( H\right) .$ On the other hand, by Lemma
2.3, $\left\Vert A^{\ast n}KA^{n}\right\Vert \rightarrow 0$. It remains to
show that $T_{0}=0.$ Indeed, for an arbitrary $x,y\in H,$ from the identity $%
A^{\ast n}T_{0}A^{n}=T_{0}$ $\left( \forall n\in
\mathbb{N}
\right) $, we can write%
\begin{equation*}
\left\vert \langle T_{0}x,y\rangle \right\vert =\left\vert \langle
T_{0}A^{n}x,A^{n}y\rangle \right\vert \leq \left\Vert T_{0}\right\Vert
\left\Vert A^{n}x\right\Vert \left\Vert A^{n}y\right\Vert \rightarrow 0.
\end{equation*}%
Hence $T_{0}=0.$

(b)$\Rightarrow $(c)$\Rightarrow $(a) are obtained from Lemma 2.3.
\end{proof}

Recall that an operator $T\in B\left( X\right) $ is said to be \textit{%
almost periodic} if for every $x\in X$, the orbit $\left\{ T^{n}x:n\in
\mathbb{N}
\right\} $ is relatively compact. Clearly, an almost periodic operator is
power bounded, that is,
\begin{equation*}
\sup_{n\geq 0}\left\Vert T^{n}\right\Vert <\infty .
\end{equation*}%
If $T\in B\left( X\right) $ is an almost periodic operator, then by the
Jacobs-Glicksberg-de Leeuw decomposition theorem \cite[Ch.I, Theorem 1.15]{7}%
, every $x\in X$ can be written as $x=x_{0}+x_{1},$ where $\left\Vert
T^{n}x_{0}\right\Vert \rightarrow 0$ and $x_{1}\in \overline{\text{span}}%
\left\{ y\in X:\exists \xi \in \mathbb{T},\text{ }Ty=\xi y\right\} .$

From now on, for a given $T\in B\left( X\right) $ the left and right
multiplication operators on $B\left( X\right) $ will be denoted by $L_{T}$
and $R_{T},$ respectively.

The following result is an improvement of Proposition 2.5.

\begin{proposition}
Let $A\in B\left( H\right) $ and assume that $I-AA^{\ast }\in K\left(
H\right) ,$ $\left\Vert A^{n}x\right\Vert \rightarrow 0$ and $\left\Vert
A^{\ast n}x\right\Vert \rightarrow 0$ for all $x\in H.$ For an arbitrary $%
T\in B\left( H\right) $ the following assertions are equivalent:

$\left( a\right) $ $\left\{ A^{\ast n}TA^{n}:n\in
\mathbb{N}
\right\} $ is relatively compact in the operator norm topology.

$\left( b\right) $ $A^{\ast n}TA^{n}\rightarrow 0$ in operator norm.

$\left( c\right) $ $T$ is a compact operator.
\end{proposition}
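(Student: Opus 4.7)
The two easy implications $(b)\Rightarrow(c)$ and $(c)\Rightarrow(b)$ are read off from Lemma 2.3 applied to the triple $(A^{*},T,A)$: the condition $I-AA^{*}\in K(H)$ makes $A^{*}$ essentially isometric, and $\|A^{*n}x\|\to 0$, $\|A^{n}x\|\to 0$ are given by hypothesis. Since a norm-convergent sequence is automatically relatively compact, $(b)\Rightarrow(a)$ is trivial, and the substantive new content of the proposition is the implication $(a)\Rightarrow(b)$.

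To prove $(a)\Rightarrow(b)$, I would introduce the bounded linear operator
\begin{equation*}
\Phi:B(H)\to B(H),\qquad \Phi(Y)=A^{*}YA,
\end{equation*}
so that $\Phi^{n}(T)=A^{*n}TA^{n}$. The uniform boundedness principle applied to the pointwise-null sequences $\{A^{n}\}$ and $\{A^{*n}\}$ gives $C:=\sup_{n}\|A^{n}\|<\infty$, whence $\|\Phi^{n}\|\le C^{2}$ and $\Phi$ is power bounded. Let $X_{T}:=\overline{\mathrm{span}}\{\Phi^{n}T:n\ge 0\}$; this is a closed $\Phi$-invariant subspace of $B(H)$ containing $T$, and a standard $\varepsilon$-approximation argument, combined with power-boundedness and the relative compactness of $\{\Phi^{n}T\}$ assumed in (a), shows that every orbit in $X_{T}$ is relatively norm compact. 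Thus $\Phi|_{X_{T}}$ is almost periodic in the sense used in the excerpt, and the Jacobs--Glicksberg--de Leeuw decomposition yields a splitting $T=T_{0}+T_{1}$ with $\|\Phi^{n}T_{0}\|\to 0$ and $T_{1}\in\overline{\mathrm{span}}\{Y\in X_{T}:\Phi(Y)=\xi Y\text{ for some }\xi\in\mathbb{T}\}$.

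It then suffices to show that $T_{1}=0$, for which I would check that every unimodular eigenvector of $\Phi|_{X_{T}}$ is zero. If $A^{*}YA=\xi Y$ with $|\xi|=1$, iteration gives $A^{*n}YA^{n}=\xi^{n}Y$, so that for all $x,y\in H$,
\begin{equation*}
|\langle Yx,y\rangle|=|\langle YA^{n}x,A^{n}y\rangle|\le\|Y\|\|A^{n}x\|\|A^{n}y\|\longrightarrow 0,
\end{equation*}
forcing $Y=0$. Hence $T_{1}=0$, so $T=T_{0}$ and $\|A^{*n}TA^{n}\|\to 0$, which is (b); compactness of $T$ then follows from Lemma 2.3(b) (or equivalently from Proposition 2.5). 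The main obstacle is the passage from single-orbit relative compactness in (a) to the almost periodicity of $\Phi|_{X_{T}}$ needed to invoke JGdL exactly in the form stated in the excerpt; this is a routine but nontrivial density/$\varepsilon$-step that has to be spelled out carefully.
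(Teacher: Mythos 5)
Your proposal is correct and follows essentially the same route as the paper: the easy implications via Lemma 2.3, and for $(a)\Rightarrow(b)$ the power boundedness of $L_{A^{*}}R_{A}$, the Jacobs--Glicksberg--de Leeuw decomposition $T=T_{0}+T_{1}$ applied to a suitable closed invariant subspace on which the restriction is almost periodic, and the annihilation of unimodular eigenvectors via $\vert\langle Qx,y\rangle\vert\le\Vert Q\Vert\,\Vert A^{n}x\Vert\,\Vert A^{n}y\Vert\to 0$. The only cosmetic difference is that the paper takes the subspace to be the set of \emph{all} operators with relatively norm compact orbit (making almost periodicity immediate), whereas you take the closed span of the single orbit and supply the $\varepsilon$-argument there; both require the same routine verification.
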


\begin{proof}
(a)$\Rightarrow $(b) Let $E$ be the set of all $Q\in B\left( H\right) $ such
that
\begin{equation*}
\left\{ \left( L_{A^{\ast }}R_{A}\right) ^{n}Q:n\in
\mathbb{N}
\right\}
\end{equation*}
is relatively compact in the operator norm topology. By the uniform
boundedness principle, the operator $L_{A^{\ast }}R_{A}$ is power bounded
and therefore $E$ is a closed (in operator norm) $L_{A^{\ast }}R_{A}-$%
invariant subspace. Consequently, $L_{A^{\ast }}R_{A}\mid _{E},$ the
restriction of $L_{A^{\ast }}R_{A}$ to $E$ is an almost periodic operator.
Since $T\in E,$ by the Jacobs-Glicksberg-de Leeuw decomposition theorem, $%
T=T_{0}+T_{1},$ where
\begin{equation*}
\lim_{n\rightarrow \infty }\left\Vert A^{\ast n}T_{0}A^{n}\right\Vert =0
\end{equation*}%
and%
\begin{equation*}
T_{1}\in \overline{\text{span}}^{\left\Vert \cdot \right\Vert }\left\{ Q\in
E:\exists \xi \in \mathbb{T},\text{ }A^{\ast }QA=\xi Q\right\} .
\end{equation*}%
We must show that $T_{1}=0.$ For this, it suffices to show that the identity
$A^{\ast }QA=\xi Q$ $\left( \xi \in \mathbb{T}\right) $ implies $Q=0.$
Indeed, since
\begin{equation*}
A^{\ast n}QA^{n}=\xi ^{n}Q\text{, }\forall n\in
\mathbb{N}
,
\end{equation*}%
we get%
\begin{equation*}
\left\vert \langle Qx,y\rangle \right\vert =\left\vert \langle
QA^{n}x,A^{n}y\rangle \right\vert \leq \left\Vert Q\right\Vert \left\Vert
A^{n}x\right\Vert \left\Vert A^{n}y\right\Vert \rightarrow 0\text{, \ }%
\forall x,y\in H.
\end{equation*}%
Hence $Q=0$.

(b)$\Rightarrow $(c)$\Rightarrow $(a) are obtained from Lemma 2.3.
\end{proof}

Next, we have the following:

\begin{theorem}
Let $A$ and $B^{\ast }$ be two essentially isometric contractions on $H$ and
assume that $\left\Vert A^{n}x\right\Vert \rightarrow 0$ and $\left\Vert
B^{\ast n}x\right\Vert \rightarrow 0$ for all $x\in H.$ Then, for an
arbitrary $T\in B\left( H\right) $ we have%
\begin{equation*}
\lim_{n\rightarrow \infty }\left\Vert A^{n}TB^{n}\right\Vert =\left\Vert
T+K\left( H\right) \right\Vert .
\end{equation*}
\end{theorem}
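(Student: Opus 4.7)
My plan is to squeeze $\left\Vert A^{n}TB^{n}\right\Vert $ between an upper and a lower bound that both converge to $\left\Vert T+K\left( H\right) \right\Vert $. The upper bound is a routine consequence of Lemma 2.3(a), using that both $A$ and $B^{\ast }$ are contractions; the lower bound is obtained by pushing everything through the limit operator functor of Proposition 2.2, under which $\widehat{A}$ and $\widehat{B}^{\ast }$ become honest isometries on $\widehat{H}$.

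For the upper bound, since $\left\Vert A\right\Vert \leq 1$ and $\left\Vert B\right\Vert =\left\Vert B^{\ast }\right\Vert \leq 1$, for any $K\in K\left( H\right) $ I decompose $T=\left( T+K\right) -K$ and estimate
\begin{equation*}
\left\Vert A^{n}TB^{n}\right\Vert \leq \left\Vert A^{n}\left( T+K\right) B^{n}\right\Vert +\left\Vert A^{n}KB^{n}\right\Vert \leq \left\Vert T+K\right\Vert +\left\Vert A^{n}KB^{n}\right\Vert .
\end{equation*}
The hypotheses $\left\Vert A^{n}x\right\Vert \rightarrow 0$ and $\left\Vert B^{\ast n}x\right\Vert \rightarrow 0$ are exactly those of Lemma 2.3(a), so the compact error $\left\Vert A^{n}KB^{n}\right\Vert $ tends to zero. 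This yields $\limsup_{n}\left\Vert A^{n}TB^{n}\right\Vert \leq \left\Vert T+K\right\Vert $ for every $K\in K\left( H\right) $, whence $\limsup_{n}\left\Vert A^{n}TB^{n}\right\Vert \leq \left\Vert T+K\left( H\right) \right\Vert $.

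For the lower bound, Proposition 2.2(c) makes $\widehat{A}$ and $\widehat{B}^{\ast }$ isometries on $\widehat{H}$, so $\widehat{A}^{n}$ is an isometry and $\widehat{B}^{n}$ is a coisometry. Using that $S\mapsto \widehat{S}$ is a contractive $\ast $-homomorphism together with Proposition 2.2(d),
\begin{equation*}
\left\Vert T+K\left( H\right) \right\Vert =\left\Vert \widehat{T}\right\Vert =\left\Vert \widehat{A}^{n}\widehat{T}\widehat{B}^{n}\right\Vert =\left\Vert \widehat{A^{n}TB^{n}}\right\Vert \leq \left\Vert A^{n}TB^{n}\right\Vert ,
\end{equation*}
where the middle equality uses that left multiplication by an isometry preserves operator norm, while right multiplication by $\widehat{B}^{n}$ does too, by passing to adjoints and invoking the isometry of $\widehat{B}^{\ast n}$. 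Taking $\liminf_{n}$ completes the squeeze.

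I do not anticipate a serious obstacle: every ingredient is already supplied by Proposition 2.2 and Lemma 2.3, and the only point calling for slight care is the two-sided norm invariance $\left\Vert \widehat{A}^{n}X\widehat{B}^{n}\right\Vert =\left\Vert X\right\Vert $ used in the lower bound, which is immediate once one notes that $\widehat{B}^{n}$ is a coisometry with isometric adjoint.
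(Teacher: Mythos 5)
Your proof is correct and follows essentially the same route as the paper: the upper bound via the decomposition $T=(T+K)-K$, contractivity, and Lemma 2.3(a), and the lower bound via the limit-operator functor of Proposition 2.2, using that $\widehat{A}$ and $\widehat{B}^{\ast}$ are isometries so that $\left\Vert \widehat{A}^{n}\widehat{T}\widehat{B}^{n}\right\Vert =\left\Vert \widehat{T}\right\Vert =\left\Vert T+K\left( H\right) \right\Vert \leq \left\Vert A^{n}TB^{n}\right\Vert$. The only difference is cosmetic: you spell out the adjoint argument for right multiplication by the coisometry $\widehat{B}^{n}$, which the paper leaves implicit.
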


\begin{proof}
If $K\in K\left( H\right) ,$ then by Lemma 2.3, $\left\Vert
A^{n}KB^{n}\right\Vert \rightarrow 0.$ Since
\begin{equation*}
\left\Vert A^{n}\left( T+K\right) B^{n}\right\Vert \leq \left\Vert
T+K\right\Vert ,
\end{equation*}%
we have
\begin{equation*}
\lim_{n\rightarrow \infty }\left\Vert A^{n}TB^{n}\right\Vert \leq \left\Vert
T+K\left( H\right) \right\Vert .
\end{equation*}%
For the reverse inequality, let $\widehat{A}$, $\widehat{T}$ and $\widehat{B}
$ be the limit operators associated with $A$, $T$ and $B,$ respectively. By
Proposition 2.2, $\widehat{A}$ and $\widehat{B}^{\ast }$ are isometries. By
using the same proposition again, we can write%
\begin{equation*}
\left\Vert T+K\left( H\right) \right\Vert =\left\Vert \widehat{T}\right\Vert
=\left\Vert \widehat{A}^{n}\widehat{T}\widehat{B}^{n}\right\Vert \leq
\left\Vert A^{n}TB^{n}\right\Vert \text{, \ }\forall n\in
\mathbb{N}
.
\end{equation*}%
Thus we have
\begin{equation*}
\left\Vert T+K\left( H\right) \right\Vert \leq \lim_{n\rightarrow \infty
}\left\Vert A^{n}TB^{n}\right\Vert .
\end{equation*}
\end{proof}

We know \cite[Corollary 7.13]{5} that every Toeplitz operator $T_{\varphi }$
with symbol $\varphi \in L^{\infty }$ satisfies%
\begin{equation*}
\left\Vert T_{\varphi }\right\Vert =\left\Vert T_{\varphi }+K\left(
H^{2}\right) \right\Vert .
\end{equation*}

As a consequence of Theorem 2.7, we have the following generalization of the
preceding formula.

\begin{corollary}
Let $A\in B\left( H\right) $ be a contraction and assume that $I-AA^{\ast
}\in K\left( H\right) $ and $\left\Vert A^{\ast n}x\right\Vert \rightarrow 0$
for all $x\in H.$ Then, for an arbitrary $T\in B\left( H\right) $ we have%
\begin{equation*}
\lim_{n\rightarrow \infty }\left\Vert A^{\ast n}TA^{n}\right\Vert
=\left\Vert T+K\left( H\right) \right\Vert .
\end{equation*}
\end{corollary}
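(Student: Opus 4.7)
The plan is to observe that Corollary 2.8 is essentially the ``diagonal'' specialization of Theorem 2.7 obtained by substituting $A \leftarrow A^{\ast}$ and $B \leftarrow A$. So the whole proof reduces to checking that the hypotheses of Theorem 2.7 are met under the weaker-looking hypotheses of the corollary.

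First I would verify the contractivity and essential isometry requirements. The operator $A^{\ast}$ (playing the role of $A$ in Theorem 2.7) is a contraction because $A$ is. Its essential isometry condition reads $I - (A^{\ast})^{\ast} A^{\ast} = I - A A^{\ast} \in K(H)$, which is exactly the standing hypothesis on $A$. Similarly, $A^{\ast}$ (playing the role of $B^{\ast}$ since $B = A$) is a contraction, and its essential isometry is again $I - A A^{\ast} \in K(H)$. The asymptotic-zero conditions $\|A^{\ast n} x\| \to 0$ and $\|(A^{\ast})^{\ast n} x\| = \|A^{\ast n} x\| \to 0$ both coincide with the given hypothesis.

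With all hypotheses of Theorem 2.7 verified, I would apply it directly: for any $T \in B(H)$,
\begin{equation*}
\lim_{n\to\infty} \|A^{\ast n} T A^n\| = \|T + K(H)\|.
\end{equation*}
This is precisely the conclusion of Corollary 2.8.

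There is no real obstacle here; the only subtlety is keeping track of which operator plays the role of $A$ and which plays the role of $B$ in Theorem 2.7, and noticing that the single condition $I - A A^{\ast} \in K(H)$ simultaneously covers both essential-isometry requirements because $B^{\ast} = A^{\ast}$ and the essential-isometry condition for $A^{\ast}$ uses $A^{\ast}(A^{\ast})^{\ast} = A^{\ast} A$... wait, let me re-examine: essentially isometric for an operator $C$ means $I - C^{\ast} C \in K(H)$. For $C = A^{\ast}$ this is $I - A A^{\ast} \in K(H)$, as claimed. So the hypotheses really do collapse to the single compactness assumption in the corollary.
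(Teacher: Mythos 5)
Your proof is correct and is exactly the route the paper intends: Corollary 2.8 is stated as an immediate consequence of Theorem 2.7 via the substitution $A\leftarrow A^{\ast}$, $B\leftarrow A$, and your verification that both essential-isometry conditions collapse to $I-AA^{\ast}\in K(H)$ is the only point that needed checking.
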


For an arbitrary $A,B\in B\left( H\right) ,$ we put
\begin{equation*}
\mathcal{I}_{A,B}=\left\{ T\in B\left( H\right) :ATB=T\right\} .
\end{equation*}

\begin{proposition}
Assume that the operators $A,B$ satisfy the hypotheses of Theorem 2.7. Then,
for an arbitrary $K\in K\left( H\right) $ we have%
\begin{equation*}
\left\Vert K+\mathcal{I}_{A,B}\right\Vert \geq \frac{1}{2}\left\Vert
K\right\Vert .
\end{equation*}%
In the case $AB=I,$ this estimate is the best possible.
\end{proposition}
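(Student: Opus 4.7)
The plan is to prove the lower bound by exploiting, for each $T_{0}\in \mathcal{I}_{A,B}$, the iterated identity $A^{n}T_{0}B^{n}=T_{0}$ (immediate by induction from $AT_{0}B=T_{0}$) together with Lemma 2.3(a), which forces $A^{n}KB^{n}$ to vanish in norm. Writing
\begin{equation*}
A^{n}(K+T_{0})B^{n}=A^{n}KB^{n}+T_{0},
\end{equation*}
the right-hand side converges in norm to $T_{0}$, so $\|A^{n}(K+T_{0})B^{n}\|\to \|T_{0}\|$. Since $A$ and $B$ are contractions, $\|A^{n}(K+T_{0})B^{n}\|\leq \|K+T_{0}\|$ for every $n$. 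Passing to the limit yields $\|T_{0}\|\leq \|K+T_{0}\|$.

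The triangle inequality then gives
\begin{equation*}
\|K\|\leq \|K+T_{0}\|+\|T_{0}\|\leq 2\|K+T_{0}\|,
\end{equation*}
and taking the infimum over $T_{0}\in \mathcal{I}_{A,B}$ produces $\|K+\mathcal{I}_{A,B}\|\geq \tfrac{1}{2}\|K\|$, which is the sought lower bound. Note that Theorem 2.7 could also be invoked here (applied to $T = K + T_{0}$, using that $K+T_{0}$ and $T_{0}$ have the same image in the Calkin algebra), but the direct argument above is shorter and uses only Lemma 2.3(a) and contractivity.

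For sharpness when $AB=I$, the key observation is that $I\in \mathcal{I}_{A,B}$, since $AIB=AB=I$, hence the line $\mathbb{C}\cdot I$ lies in $\mathcal{I}_{A,B}$. I would pick any unit vector $x\in H$ and take $K:=x\otimes x$, a rank-one orthogonal projection of norm one. The self-adjoint operator $K-\tfrac{1}{2}I$ acts as $\tfrac{1}{2}$ on $\operatorname{span}\{x\}$ and as $-\tfrac{1}{2}$ on $x^{\perp }$, so $\|K-\tfrac{1}{2}I\|=\tfrac{1}{2}$. Using $-\tfrac{1}{2}I\in \mathcal{I}_{A,B}$, this yields $\|K+\mathcal{I}_{A,B}\|\leq \tfrac{1}{2}=\tfrac{1}{2}\|K\|$, matching the lower bound.

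I do not foresee a genuine obstacle here; the conceptual step is noticing that invariance $A^{n}T_{0}B^{n}=T_{0}$ combined with $\|A^{n}KB^{n}\|\to 0$ and contractivity pins $\|T_{0}\|$ below $\|K+T_{0}\|$, after which the triangle inequality closes the argument. The only item to double-check in the sharpness step is that $-\tfrac{1}{2}I$ genuinely lies in $\mathcal{I}_{A,B}$, which is forced by the hypothesis $AB=I$.
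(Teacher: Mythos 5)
Your proof is correct and follows essentially the same route as the paper: the key inequality $\|T_{0}\|\leq \|K+T_{0}\|$ comes from $\lim_{n}\|A^{n}(K+T_{0})B^{n}\|=\|T_{0}\|$ together with contractivity, followed by the triangle inequality, and the sharpness example $K=x\otimes x$, $T_{0}=-\tfrac{1}{2}I$ is identical to the paper's. The only cosmetic differences are that the paper phrases the lower bound as a proof by contradiction and cites Theorem 2.7 directly for $\|T_{0}\|=\|T_{0}+K(H)\|$, whereas you unpack that step into Lemma 2.3(a) plus contractivity, which is if anything slightly more self-contained.
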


\begin{proof}
Assume that there exists $K\in K\left( H\right) $ such that
\begin{equation*}
\left\Vert K+\mathcal{I}_{A,B}\right\Vert <\frac{1}{2}\left\Vert
K\right\Vert .
\end{equation*}%
Then there exists $T\in \mathcal{I}_{A,B}$ such that
\begin{equation*}
\left\Vert K+T\right\Vert <\frac{1}{2}\left\Vert K\right\Vert .
\end{equation*}%
By Theorem 2.7, $\left\Vert T\right\Vert =\left\Vert T+K\left( H\right)
\right\Vert $ which implies $\left\Vert T\right\Vert \leq \left\Vert
K+T\right\Vert $. Consequently, we can write%
\begin{equation*}
\left\Vert K\right\Vert \leq \left\Vert K+T\right\Vert +\left\Vert
T\right\Vert \leq 2\left\Vert K+T\right\Vert <\left\Vert K\right\Vert ,
\end{equation*}%
which is a contradiction.

In the case $AB=I,$ we have $I\in \mathcal{I}_{A,B}.$ If $K=x\otimes x,$
where $\left\Vert x\right\Vert =1$ and $T=-\frac{1}{2}I$, then $\left\Vert
K+T\right\Vert =\frac{1}{2}.$
\end{proof}

Let $\mathcal{T}$ be the space of all Toeplitz operators. By taking $%
A=S^{\ast }$ and $B=S$ in Proposition 2.9, we have
\begin{equation*}
\left\Vert K+\mathcal{T}\right\Vert \geq \frac{1}{2}\left\Vert K\right\Vert
\text{, \ }\forall K\in K\left( H^{2}\right) ,
\end{equation*}%
where this estimate is the best possible.

\section{One dimensional model and the Hartman-Sarason theorem}

Using the results of the preceding section, here we give a quantitative
generalization of the Hartman-Sarason theorem.

Recall that a contraction $T$ on $H$ is said to be \textit{completely
non-unitary} if it has no proper reducing subspace on which it acts as a
unitary operator. If $T$ is a completely non-unitary contraction, then $%
f\left( T\right) $ $\left( f\in H^{\infty }\right) $ can be defined by the
Nagy-Foia\c{s} functional calculus \cite[Ch.III]{19}.

Let $T$ be a contraction on $H$ and assume that
\begin{equation*}
\lim_{n\rightarrow \infty }\left\Vert T^{n}x\right\Vert =\lim_{n\rightarrow
\infty }\left\Vert T^{\ast n}x\right\Vert =0\text{, \ }\forall x\in H.
\end{equation*}%
In addition, if%
\begin{equation*}
\dim \left( I-TT^{\ast }\right) H=\dim \left( I-T^{\ast }T\right) H=1,
\end{equation*}%
then by the Model Theorem of Nagy-Foia\c{s} \cite[Ch.VI, Theorem 2.3]{19}
(see also, \cite{21}) $T$ is unitary equivalent to its \textit{model operator%
}
\begin{equation*}
S_{\theta }=P_{\theta }S\mid _{H_{\theta }^{2}}
\end{equation*}%
acting on the \textit{model space}
\begin{equation*}
H_{\theta }^{2}=H^{2}\ominus \theta H^{2},
\end{equation*}%
where $\theta $ is an inner function (a function $\theta $ in $H^{\infty }$
is an inner function if $\left\vert \theta \right\vert =1$ a.e. on $\mathbb{T%
}$) and $P_{\theta }$ is the orthogonal projection from $H^{2}$ onto $%
H_{\theta }^{2}$. Beurling's theorem \cite[Corollary 6.11]{5} says that
these spaces are generic invariant subspaces for the backward shift operator
\begin{equation*}
\left( S^{\ast }f\right) \left( z\right) =\frac{f\left( z\right) -f\left(
0\right) }{z},\text{ \ }f\in H^{2}.
\end{equation*}%
Notice that
\begin{equation*}
S_{\theta }=\left( S^{\ast }\mid _{H_{\theta }^{2}}\right) ^{\ast }.
\end{equation*}

Let $\theta $ be an inner function and let $S_{\theta }$ be the model
operator on the model space $H_{\theta }^{2}.$ For an arbitrary $f\in
H^{\infty }$, we can define the operator
\begin{equation*}
f\left( S_{\theta }\right) =P_{\theta }f\left( S\right) \mid _{H_{\theta
}^{2}}
\end{equation*}%
which is unitary equivalent to $f\left( T\right) .$ The map $f\mapsto
f\left( S_{\theta }\right) $ is linear, multiplicative and by the Nehari
formula \cite[p.235]{21},%
\begin{equation*}
\left\Vert f\left( S_{\theta }\right) \right\Vert =\text{dist}\left(
\overline{\theta }f,H^{\infty }\right) .
\end{equation*}%
Let us mention Sarason's theorem \cite[p.230]{21} which asserts that an
operator $Q\in B\left( H_{\theta }^{2}\right) $ is a commutant of $S_{\theta
}$ if and only if $Q=f\left( S_{\theta }\right) $ for some $f\in H^{\infty }$%
.

Let us also mention that the classical theorem of Hartman and Sarason \cite[%
p.235]{21} classifies compactness of the operators $f\left( S_{\theta
}\right) .$ The operator $f\left( S_{\theta }\right) $ $\left( f\in
H^{\infty }\right) $ is compact if and only if $\overline{\theta }f\in
H^{\infty }+C\left( \mathbb{T}\right) .$

We have the following quantitative generalization of the Hartman-Sarason
theorem.

\begin{theorem}
Let $\theta $ be an inner function and let $S_{\theta }$ be the model
operator on the model space $H_{\theta }^{2}.$ Then, for an arbitrary $f\in
H^{\infty }$ we have%
\begin{equation*}
\left\Vert f\left( S_{\theta }\right) +K\left( H_{\theta }^{2}\right)
\right\Vert =\textnormal{dist}\left( \overline{\theta }f,H^{\infty }+C\left(
\mathbb{T}\right) \right) .
\end{equation*}
\end{theorem}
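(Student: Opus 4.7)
The plan is to realize $f(S_\theta)$ as an isometric copy of the Hankel operator $H_{\overline\theta f}$ and then invoke the essential-norm refinement of Hartman's theorem. Using the standard identity $P_\theta g = g - \theta P_{+}(\overline\theta g)$ for the projection $P_\theta:H^2\to H_\theta^2$, together with $|\theta|=1$ a.e.\ on $\mathbb{T}$, one computes that for every $h\in H_\theta^2$
\begin{equation*}
f(S_\theta)h \;=\; P_\theta(fh) \;=\; \theta\,P_{-}(\overline\theta f h) \;=\; \theta\,H_{\overline\theta f}h,
\end{equation*}
where $H_{\overline\theta f}:H^2\to H^2_{-}:=L^2\ominus H^2$ is the standard Hankel operator with $L^\infty$-symbol $\overline\theta f$. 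Since multiplication by the unimodular $\theta$ is an $L^2$-isometry, this gives the pointwise equality $\|f(S_\theta)h\| = \|H_{\overline\theta f}h\|$ for all $h\in H_\theta^2$.

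Next I would upgrade this pointwise identity to an equality of essential norms. Because $H_{\overline\theta f}(\theta u)=P_{-}(fu)=0$ for $u\in H^2$, the Hankel operator $H_{\overline\theta f}$ vanishes on $\theta H^2$, so relative to $H^2 = H_\theta^2\oplus\theta H^2$ it has the block form $[H_{\overline\theta f}|_{H_\theta^2},\,0]$; a standard block-perturbation argument then gives $\|H_{\overline\theta f}+K(H^2,H^2_{-})\|=\|H_{\overline\theta f}|_{H_\theta^2}+K(H_\theta^2,H^2_{-})\|$. Combining with the isometric identification from the first step, and noting that the range of $H_{\overline\theta f}|_{H_\theta^2}$ already lies inside $\overline\theta H_\theta^2 \subset H^2_{-}$ so that compact perturbations transfer losslessly via $M_\theta$, one concludes $\|f(S_\theta)+K(H_\theta^2)\| = \|H_{\overline\theta f}+K(H^2,H^2_{-})\|$. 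To finish, I would apply the classical essential-norm formula $\|H_\psi+K(H^2,H^2_{-})\|=\textnormal{dist}(\psi,H^\infty + C(\mathbb{T}))$ for $\psi\in L^\infty$ with $\psi=\overline\theta f$.

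The main obstacle is the middle step: $f(S_\theta)$ and $H_{\overline\theta f}|_{H_\theta^2}$ are valued in different spaces ($H_\theta^2$ versus $H^2_{-}$), and one must verify carefully that the passage between compact perturbations of the two is lossless in essential norm. This reduces to the observation that $M_\theta$ maps the range of $H_{\overline\theta f}|_{H_\theta^2}$ isometrically onto a subspace of $H_\theta^2$, so compact perturbations on one side correspond exactly to compact perturbations on the other. Once this calibration is secured, the proof is a direct translation of the classical Hankel essential-norm formula, which itself can be derived from Nehari's theorem combined with Hartman's compactness criterion ($H_\phi$ compact for $\phi\in H^\infty + C(\mathbb{T})$). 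As an alternative route that stays closer to the paper's machinery, one could first apply Corollary 2.8 with $A=S_\theta$ (a contraction with $I-S_\theta S_\theta^{\ast}$ of rank one and $\|S_\theta^{\ast n} h\|\to 0$) to obtain $\|f(S_\theta)+K(H_\theta^2)\| = \lim_n\|S_\theta^{\ast n}f(S_\theta)S_\theta^n\|$, and then identify that limit with the same Hankel-operator essential norm.
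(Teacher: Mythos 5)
Your argument is correct, but it takes a genuinely different route from the paper's. You identify $f(S_\theta)$ with the Hankel operator $H_{\overline{\theta} f}$ via the isometry of multiplication by $\theta$ (the computation $f(S_\theta)h=\theta P_{-}(\overline{\theta}fh)$ and the vanishing of $H_{\overline{\theta}f}$ on $\theta H^2$ are both right, and the calibration of essential norms that you flag as the main obstacle does go through: compressing by the inclusion $H_\theta^2\hookrightarrow H^2$ in one direction and by $P_{H_\theta^2}M_\theta$ in the other transfers compact perturbations losslessly), and you then quote the classical Adamyan--Arov--Krein formula $\|H_\psi+K\|=\mathrm{dist}(\psi,H^\infty+C(\mathbb{T}))$. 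The paper instead stays entirely inside its own machinery: Theorem 2.7 (whose engine is the limit-operator identity $\|\widehat{T}\|=\|T+K(H)\|$ of Proposition 2.2(d)) gives $\lim_n\|S_\theta^{\ast n}TS_\theta^{n}\|=\|T+K(H_\theta^2)\|$, from which it extracts $\lim_n\|TS_\theta^{n}\|=\|T+K(H_\theta^2)\|$; taking $T=f(S_\theta)$, the Nehari formula converts $\|S_\theta^{n}f(S_\theta)\|$ into $\mathrm{dist}(\overline{\theta}f,\overline{z}^{n}H^\infty)$, and Lemmas 3.2--3.3 identify the limit of these distances with $\mathrm{dist}(\overline{\theta}f,H^\infty+C(\mathbb{T}))$. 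So the hard lower bound that you import wholesale from the Hankel essential-norm theorem is, in the paper, re-derived from the weakly-null-sequence characterization of the essential norm; your proof is shorter and makes the structural link to Hankel operators explicit, while the paper's is self-contained relative to Section 2 and in effect reproves that classical formula in this special case. Your closing alternative (Corollary 2.8 with $A=S_\theta$, then identifying the limit) is essentially the paper's actual argument, but to complete it you would still need the Nehari step and the distance lemma rather than a direct appeal to the Hankel result.
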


For the proof, we need several lemmas.

\begin{lemma}
Let $\left\{ E_{n}\right\} $ be an increasing sequence of closed subspaces
of a Banach space $X.$ Then, for an arbitrary $x\in X$ we have
\begin{equation*}
\lim_{n\rightarrow \infty }\textnormal{dist}\left( x,E_{n}\right) =%
\textnormal{dist}\left( x,\overline{\bigcup_{n=1}^{\infty }E_{n}}\right)
\text{.}
\end{equation*}
\end{lemma}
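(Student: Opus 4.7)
The plan is to verify the two inequalities that together give the claimed equality. Write $E := \overline{\bigcup_{n=1}^{\infty} E_n}$ and $d_n := \textnormal{dist}(x, E_n)$.

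First I would observe that since $\{E_n\}$ is increasing, the sequence $\{d_n\}$ is monotone nonincreasing; moreover $E_n \subseteq E$ for every $n$, so $d_n \geq \textnormal{dist}(x, E)$. In particular $\lim_n d_n$ exists and is at least $\textnormal{dist}(x, E)$. This is the easy half.

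For the reverse inequality I would fix $\varepsilon > 0$ and pick $y \in E$ with $\|x - y\| < \textnormal{dist}(x, E) + \varepsilon/2$, using the definition of distance to $E$. By definition of $E$ as a closure, there then exist an index $N$ and an element $z \in E_N$ with $\|y - z\| < \varepsilon/2$. The triangle inequality gives $\|x - z\| < \textnormal{dist}(x, E) + \varepsilon$, hence $d_N \leq \|x - z\| < \textnormal{dist}(x, E) + \varepsilon$. Monotonicity of $\{d_n\}$ then yields $d_n < \textnormal{dist}(x, E) + \varepsilon$ for all $n \geq N$, so $\lim_n d_n \leq \textnormal{dist}(x, E) + \varepsilon$. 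Letting $\varepsilon \to 0$ finishes the proof.

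There is really no substantial obstacle here; the only subtlety is making sure to use both layers of approximation (approximating $x$ by an element of $E$, then approximating that element by a point in some $E_n$) and to invoke monotonicity of $\{d_n\}$ so that the single index $N$ controls all later terms. The closedness of each $E_n$ is not actually used for the inequality; what matters is that the subspaces are nested and that their union is dense in $E$.
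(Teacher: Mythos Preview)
Your proof is correct and follows essentially the same route as the paper's: both arguments note that $\{d_n\}$ is decreasing with each $d_n \geq \textnormal{dist}(x,E)$, and then approximate $x$ by an element of some $E_N$ to establish the reverse inequality. The only cosmetic difference is that the paper argues the second inequality by contradiction (assuming $\textnormal{dist}(x,E)<\alpha$ and finding $x_0\in E_{n_0}$ with $\|x-x_0\|<\alpha$), whereas you carry out the equivalent direct $\varepsilon$-argument with an explicit two-step approximation.
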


\begin{proof}
If $x\in X,$ then the sequence $\left\{ \text{dist}\left( x,E_{n}\right)
\right\} $ is decreasing. Let
\begin{equation*}
\alpha :=\lim_{n\rightarrow \infty }\text{dist}\left( x,E_{n}\right)
=\inf_{n}\text{dist}\left( x,E_{n}\right) .
\end{equation*}%
Since
\begin{equation*}
E_{n}\subseteq \overline{\bigcup_{n=1}^{\infty }E_{n}},
\end{equation*}%
we have
\begin{equation*}
\text{dist}\left( x,\overline{\bigcup_{n=1}^{\infty }E_{n}}\right) \leq
\text{dist}\left( x,E_{n}\right)
\end{equation*}%
which implies%
\begin{equation*}
\text{dist}\left( x,\overline{\bigcup_{n=1}^{\infty }E_{n}}\right) \leq
\alpha .
\end{equation*}%
If
\begin{equation*}
\text{dist}\left( x,\overline{\bigcup_{n=1}^{\infty }E_{n}}\right) <\alpha ,
\end{equation*}%
then $\left\Vert x-x_{0}\right\Vert <\alpha $ for some $x_{0}\in
\bigcup_{n=1}^{\infty }E_{n}.$ Consequently, $x_{0}\in E_{n_{0}}$ for some $%
n_{0}.$ Hence dist$\left( x,E_{n_{0}}\right) <\alpha .$ This contradicts dist%
$\left( x,E_{n_{0}}\right) \geq \alpha .$
\end{proof}

\begin{lemma}
For an arbitrary $\varphi \in L^{\infty }$ we have%
\begin{equation*}
\lim_{n\rightarrow \infty }\textnormal{dist}\left( \varphi ,\overline{z}%
^{n}H^{\infty }\right) =\textnormal{dist}\left( \varphi ,H^{\infty }+C\left(
\mathbb{T}\right) \right) .
\end{equation*}
\end{lemma}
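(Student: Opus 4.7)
My plan is to reduce Lemma 3.3 to the previous Lemma 3.2 by identifying $\overline{\bigcup_{n=1}^{\infty}\overline{z}^{n}H^{\infty}}$ with $H^{\infty}+C(\mathbb{T})$ inside $L^{\infty}$.

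First I would observe that the sequence $\{\overline{z}^{n}H^{\infty}\}$ is increasing: if $f\in H^{\infty}$, then $\overline{z}^{n}f=\overline{z}^{n+1}(zf)$ and $zf\in H^{\infty}$, so $\overline{z}^{n}H^{\infty}\subseteq \overline{z}^{n+1}H^{\infty}$. Each $\overline{z}^{n}H^{\infty}$ is closed in $L^{\infty}$ because multiplication by $\overline{z}^{n}$ is an isometry of $L^{\infty}$ and $H^{\infty}$ is closed. Hence Lemma 3.2 applies and gives
\begin{equation*}
\lim_{n\to\infty}\textnormal{dist}\bigl(\varphi,\overline{z}^{n}H^{\infty}\bigr)=\textnormal{dist}\biggl(\varphi,\overline{\bigcup_{n=1}^{\infty}\overline{z}^{n}H^{\infty}}\biggr).
\end{equation*}

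Next I would prove the set equality $\overline{\bigcup_{n=1}^{\infty}\overline{z}^{n}H^{\infty}}=H^{\infty}+C(\mathbb{T})$. For the inclusion $\subseteq$, given $f\in H^{\infty}$ I would split $f=p_{n-1}+z^{n}g$, where $p_{n-1}$ is the Taylor polynomial of degree less than $n$ and $g\in H^{\infty}$; then $\overline{z}^{n}f=\overline{z}^{n}p_{n-1}+g$, whose first term is a trigonometric polynomial (hence in $C(\mathbb{T})$) and whose second term lies in $H^{\infty}$. Thus $\overline{z}^{n}H^{\infty}\subseteq H^{\infty}+C(\mathbb{T})$ for every $n$; taking the $L^{\infty}$-closure and using the classical fact (Sarason) that $H^{\infty}+C(\mathbb{T})$ is closed in $L^{\infty}$ gives the inclusion. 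For the reverse inclusion, fix $h+c\in H^{\infty}+C(\mathbb{T})$ and $\varepsilon>0$, approximate $c$ uniformly by a trigonometric polynomial $p(z)=\sum_{k=-N}^{N}c_{k}z^{k}$ with $\|c-p\|_{\infty}<\varepsilon$. Then $p=\overline{z}^{N}(z^{N}p)$ where $z^{N}p$ is an analytic polynomial, so $p\in\overline{z}^{N}H^{\infty}$; combined with $h\in H^{\infty}\subseteq\overline{z}^{N}H^{\infty}$, we get $h+p\in\overline{z}^{N}H^{\infty}$, and $\|h+c-(h+p)\|_{\infty}<\varepsilon$ shows $h+c$ lies in the closure.

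Combining the two steps yields the claimed identity. The only nontrivial input is the closedness of $H^{\infty}+C(\mathbb{T})$ in $L^{\infty}$, which is Sarason's theorem and is cited by the paper elsewhere, so I would simply invoke it; the rest is bookkeeping with Fourier decompositions and the elementary Lemma 3.2. I do not foresee any real obstacle — the main subtlety is remembering to justify that $H^{\infty}+C(\mathbb{T})$ is closed rather than merely a subspace, since otherwise the $\subseteq$ inclusion after taking closures would not follow.
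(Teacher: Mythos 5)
Your proposal is correct and follows essentially the same route as the paper: apply Lemma 3.2 to the increasing sequence of closed subspaces $E_{n}=\overline{z}^{n}H^{\infty}$ and identify $\overline{\bigcup_{n}E_{n}}$ with $H^{\infty}+C\left(\mathbb{T}\right)$. The only difference is cosmetic --- the paper cites Douglas's description of $H^{\infty}+C\left(\mathbb{T}\right)$ as the closed algebra generated by $\overline{z}$ and $H^{\infty}$ and checks that the union is already a linear span, whereas you verify both inclusions by hand via Taylor polynomials and Sarason's closedness theorem.
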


\begin{proof}
We know \cite[Proposition 6.36]{5} that $H^{\infty }+C\left( \mathbb{T}%
\right) $ is a uniformly closed subalgebra\textit{\ }of $L^{\infty }$
generated by $\overline{z}$ and $H^{\infty }.$ If $E_{n}:=\overline{z}%
^{n}H^{\infty },$ then $\left\{ E_{n}\right\} $ is an increasing sequence of
closed subspaces of $L^{\infty }$. Since%
\begin{equation*}
H^{\infty }+C\left( \mathbb{T}\right) =\overline{\text{span}}_{L^{\infty
}}\left\{ \overline{z}^{n}H^{\infty }:n\geq 0\right\}
\end{equation*}%
and%
\begin{equation*}
\overline{z}^{n}f_{1}+\overline{z}^{m}f_{2}=\left(
z^{m}f_{1}+z^{n}f_{2}\right) \overline{z}^{n+m}\in \overline{z}%
^{n+m}H^{\infty }\text{ \ }\left( f_{1},f_{2}\in H^{\infty }\right) ,
\end{equation*}%
we have
\begin{equation*}
\overline{\bigcup_{n=1}^{\infty }E_{n}}=H^{\infty }+C\left( \mathbb{T}%
\right) .
\end{equation*}%
Applying Lemma 3.2 to the subspaces $\left\{ E_{n}\right\} ,$ we obtain our
result.
\end{proof}

Now, we can prove Theorem 3.1.

\begin{proof}[Proof of Theorem 3.1]
As we have noted above, the model operator $S_{\theta }$ is an essentially
unitary contraction. Moreover, $\left\Vert S_{\theta }^{n}h\right\Vert
\rightarrow 0$ and $\left\Vert S_{\theta }^{\ast n}h\right\Vert \rightarrow 0
$ for all $h\in H_{\theta }^{2}.$ If $T\in B\left( H_{\theta }^{2}\right) ,$
then by Theorem 2.7,
\begin{equation*}
\lim_{n\rightarrow \infty }\left\Vert S_{\theta }^{\ast n}TS_{\theta
}^{n}\right\Vert =\left\Vert T+K\left( H_{\theta }^{2}\right) \right\Vert
\end{equation*}%
which implies
\begin{equation*}
\lim_{n\rightarrow \infty }\left\Vert TS_{\theta }^{n}\right\Vert \geq
\left\Vert T+K\left( H_{\theta }^{2}\right) \right\Vert .
\end{equation*}%
If $K\in K\left( H_{\theta }^{2}\right) ,$ then as $\left\Vert KS_{\theta
}^{n}\right\Vert \rightarrow 0$ (see, the proof of Lemma 2.3) we get
\begin{equation*}
\lim_{n\rightarrow \infty }\left\Vert TS_{\theta }^{n}\right\Vert
=\lim_{n\rightarrow \infty }\left\Vert \left( T+K\right) S_{\theta
}^{n}\right\Vert \leq \left\Vert T+K\right\Vert ,\text{ \ }\forall K\in
K\left( H_{\theta }^{2}\right) .
\end{equation*}%
It follows that
\begin{equation*}
\lim_{n\rightarrow \infty }\left\Vert TS_{\theta }^{n}\right\Vert \leq
\left\Vert T+K\left( H_{\theta }^{2}\right) \right\Vert .
\end{equation*}%
Thus we have%
\begin{equation*}
\lim_{n\rightarrow \infty }\left\Vert TS_{\theta }^{n}\right\Vert
=\left\Vert T+K\left( H_{\theta }^{2}\right) \right\Vert \text{, \ }\forall
T\in B\left( H_{\theta }^{2}\right) .
\end{equation*}%
In particular, taking $T=f\left( S_{\theta }\right) $ we obtain
\begin{equation}
\lim_{n\rightarrow \infty }\left\Vert S_{\theta }^{n}f\left( S_{\theta
}\right) \right\Vert =\left\Vert f\left( S_{\theta }\right) +K\left(
H_{\theta }^{2}\right) \right\Vert .  \label{3.1}
\end{equation}%
Further, by the Nehari formula we can write
\begin{equation*}
\left\Vert S_{\theta }^{n}f\left( S_{\theta }\right) \right\Vert =\text{dist}%
\left( \overline{\theta }z^{n}f,H^{\infty }\right) =\text{dist}\left(
\overline{\theta }f,\overline{z}^{n}H^{\infty }\right) .
\end{equation*}%
On the other hand, by Lemma 3.3,
\begin{equation*}
\lim_{n\rightarrow \infty }\left\Vert S_{\theta }^{n}f\left( S_{\theta
}\right) \right\Vert =\lim_{n\rightarrow \infty }\text{dist}\left( \overline{%
\theta }f,\overline{z}^{n}H^{\infty }\right) =\text{dist}\left( \overline{%
\theta }f,H^{\infty }+C\left( \mathbb{T}\right) \right) .
\end{equation*}%
Now, taking into account (3.1), finally we obtain%
\begin{equation*}
\left\Vert f\left( S_{\theta }\right) +K\left( H_{\theta }^{2}\right)
\right\Vert =\text{dist}\left( \overline{\theta }f,H^{\infty }+C\left(
\mathbb{T}\right) \right) .
\end{equation*}%
The proof is complete.
\end{proof}

Below, we present some applications of Theorem 3.1.

Let $X$ be a Banach space. As usual, $\sigma \left( T\right) $ will denote
the spectrum of the operator $T\in B\left( X\right) .$ Given $T\in B\left(
X\right) ,$ we let $A_{T}$ denote the closure in the uniform operator
topology of all polynomials in $T.$ Then, $A_{T}$ is a commutative unital
Banach algebra. The Gelfand space of $A_{T}$ can be identified with $\sigma
_{A_{T}}\left( T\right) $, the spectrum of $T$ with respect to the algebra $%
A_{T}$. Since $\sigma \left( T\right) $ is a (closed) subset of $\sigma
_{A_{T}}\left( T\right) ,$ for every $\lambda \in \sigma \left( T\right) ,$
there is a multiplicative functional $\phi _{\lambda }$ on $A_{T}$ such that
$\phi _{\lambda }\left( T\right) =\lambda $. By $\widehat{Q}$ we will denote
the Gelfand transform of $Q\in A_{T}$. Instead of $\widehat{Q}\left( \phi
_{\lambda }\right) \left( =\phi _{\lambda }\left( Q\right) \right) ,$ where $%
\lambda \in \sigma \left( T\right) ,$ we will use the notation $\widehat{Q}%
\left( \lambda \right) .$ It follows from the Shilov Theorem \cite[Theorem
2.54]{5} that if $T$ is a contraction, then
\begin{equation*}
\sigma _{A_{T}}\left( T\right) \cap \mathbb{T=}\sigma \left( T\right) \cap
\mathbb{T}\text{.}
\end{equation*}

The following result was obtained in \cite{17}.

\begin{theorem}
If $T$ is a contraction on a Hilbert space, then for an arbitrary $Q\in
A_{T} $ we have%
\begin{equation*}
\lim_{n\rightarrow \infty }\left\Vert T^{n}Q\right\Vert =\sup_{\xi \in
\sigma \left( T\right) \cap \mathbb{T}}\left\vert \widehat{Q}\left( \xi
\right) \right\vert .
\end{equation*}
\end{theorem}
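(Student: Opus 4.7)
The plan is to prove the two inequalities separately. Note first that since $T$ is a contraction, $\|T^{n+1}Q\|\le\|T\|\,\|T^{n}Q\|\le\|T^{n}Q\|$, so the sequence $\{\|T^{n}Q\|\}$ is non-increasing and the limit $L:=\lim_{n}\|T^{n}Q\|$ exists. For the inequality $L\ge\sup_{\xi\in\sigma(T)\cap\mathbb{T}}|\widehat{Q}(\xi)|$ I would use Shilov's theorem cited in the text to identify $\sigma(T)\cap\mathbb{T}=\sigma_{A_{T}}(T)\cap\mathbb{T}$, so that for each such $\xi$ there is a character $\phi_{\xi}$ on $A_{T}$ with $\phi_{\xi}(T)=\xi$. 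Then $\phi_{\xi}(T^{n}Q)=\xi^{n}\widehat{Q}(\xi)$ gives $|\widehat{Q}(\xi)|=|\xi^{n}\widehat{Q}(\xi)|\le\|T^{n}Q\|$; taking the supremum over $\xi$ and then the limit in $n$ yields the lower bound.

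For the reverse inequality, the first move is the Sz.-Nagy--Foia\c{s} decomposition $T=T_{u}\oplus T_{\mathrm{cnu}}$ on $H=H_{u}\oplus H_{\mathrm{cnu}}$, with the corresponding splitting $Q=Q_{u}\oplus Q_{1}$ (valid because $Q\in A_{T}$), so that $\|T^{n}Q\|=\max(\|T_{u}^{n}Q_{u}\|,\|T_{\mathrm{cnu}}^{n}Q_{1}\|)$. For the unitary summand each polynomial $p(T_{u})$ is normal, so $\|p(T_{u})\|=\sup_{\xi\in\sigma(T_{u})}|p(\xi)|$; passing to the closure $A_{T_{u}}$ gives $\|T_{u}^{n}Q_{u}\|=\|Q_{u}\|=\sup_{\xi\in\sigma(T_{u})}|\widehat{Q}_{u}(\xi)|$, and since $\sigma(T_{u})\subseteq\sigma(T)\cap\mathbb{T}$ with $\widehat{Q}_{u}$ agreeing with the restriction of $\widehat{Q}$, this quantity is already bounded by the desired right-hand side, for every $n$.

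The main obstacle is controlling $\lim\|T_{\mathrm{cnu}}^{n}Q_{1}\|$ by $\sup_{\xi\in\sigma(T_{\mathrm{cnu}})\cap\mathbb{T}}|\widehat{Q}_{1}(\xi)|$. The strategy is to realise $T_{\mathrm{cnu}}$ via its Nagy-Foia\c{s} model $A_{\Theta}$ on $H^{2}(E)\ominus\Theta H^{2}(F)$ and to use Sarason's commutant theorem to identify $Q_{1}$ with the Nagy-Foia\c{s} $H^{\infty}$ calculus applied to a suitable symbol $\Phi$. The asymptotic norm should then be computable, in analogy with Theorem~3.1, as the distance of $\Theta^{*}\Phi$ to a vector-valued version of $H^{\infty}+C(\mathbb{T})$, which one must finally match with the peripheral spectral supremum via the spectral theory of the model operator.

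The technical difficulty is twofold: one must pass from the scalar setting of Theorem~3.1 to operator-valued symbols, and one must handle general cnu contractions beyond the $C_{00}$ case where the tools of Section~2 apply cleanly (compact defect, both $\|T^{n}x\|\to 0$ and $\|T^{*n}x\|\to 0$). A natural route is to invoke the K\'erchy asymptotic limit $A_{T_{\mathrm{cnu}}}=\mathrm{SOT}\text{-}\lim T_{\mathrm{cnu}}^{*n}T_{\mathrm{cnu}}^{n}$ to peel off a $C_{\cdot 0}$ summand (on which Corollary~2.8 applies to give $\lim\|T_{\mathrm{cnu}}^{*n}\,\cdot\, T_{\mathrm{cnu}}^{n}\|$ as a Calkin norm, which after further analysis yields the peripheral spectral sup) and to treat the residual piece via quasi-similarity with a unitary summand, which would bring that part back to the already-handled unitary case.
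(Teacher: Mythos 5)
The paper does not actually prove this statement: Theorem 3.4 is quoted from reference [17] and used as a black box, so there is no internal proof to compare yours against. Judged on its own terms, your argument establishes only the easy half. The lower bound is correct and standard: monotonicity of $\left\Vert T^{n}Q\right\Vert$, Shilov's theorem to produce a character $\phi _{\xi }$ of $A_{T}$ with $\phi _{\xi }\left( T\right) =\xi $ for each $\xi \in \sigma \left( T\right) \cap \mathbb{T}$, and $\left\vert \widehat{Q}\left( \xi \right) \right\vert =\left\vert \phi _{\xi }\left( T^{n}Q\right) \right\vert \leq \left\Vert T^{n}Q\right\Vert $. The reduction to the completely non-unitary summand is also legitimate, since $H_{u}$ and its orthogonal complement reduce $T$ and are therefore invariant under every element of $A_{T}$.

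The upper bound $\lim_{n}\left\Vert T_{\mathrm{cnu}}^{n}Q_{1}\right\Vert \leq \sup_{\xi \in \sigma \left( T\right) \cap \mathbb{T}}\left\vert \widehat{Q}\left( \xi \right) \right\vert $ --- the entire content of the theorem --- is not proved; you only describe a programme and yourself flag its obstacles, and the programme as stated does not go through. First, a general cnu contraction has no compact defect and need not be of class $C_{00}$ or $C_{\cdot 0}$, so Theorem 2.7, Corollary 2.8 and the scalar model-space machinery of Section 3 (inner characteristic function, Nehari formula, Lemma 3.3) are unavailable; the characteristic function $\Theta $ is in general neither inner nor $\ast $-inner, and the Hartman--Sarason/Nehari identities you want to imitate fail. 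Second, Sarason's commutant theorem is specific to the multiplicity-one model $S_{\theta }$; for a general model operator one needs commutant lifting with operator-valued symbols, and an element of $A_{T}$ --- a norm limit of polynomials $p_{n}\left( T\right) $ where the $p_{n}$ need not converge in $H^{\infty }$ or even in the disk algebra --- need not equal $f\left( T\right) $ for any $f\in H^{\infty }$. Third, the proposed splitting of a cnu contraction into a $C_{\cdot 0}$ summand plus a piece quasi-similar to a unitary is not a decomposition that exists in general, and quasi-similarity does not preserve operator norms, so even where such structure is present it cannot transfer the asymptotic identity. The known proofs of this inequality (essentially the quantitative Esterle--Strouse--Zouakia/Katznelson--Tzafriri theorem for Hilbert-space contractions, which is what [17] supplies) rest on the unitary dilation together with a genuinely different functional-analytic argument, not on the model-operator computations of Section 3. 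As it stands, the hard inequality is missing.
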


For a non-empty closed subset $\Gamma $ of $\mathbb{T}$, by $H_{\Gamma
}^{\infty }$ we will denote the set of all those functions $f$ in $H^{\infty
}$ that have a continuous extension $\widetilde{f}$ to $\mathbb{D}\cup
\Gamma $. Clearly, $H_{\Gamma }^{\infty }$ is a closed subspace of $%
H^{\infty }.$ It follows from the general theory of $H^{p}$ spaces that if $%
\Gamma $ has positive Lebesgue measure and $f\in H_{\Gamma }^{\infty }$ is
not identically zero, then $\widetilde{f}$ cannot vanish identically on $%
\Gamma .$

If $T$ is a contraction on a Hilbert space $H,$ then there is a canonical
decomposition of $H$ into two $T-$reducing subspaces $H=H_{0}\oplus H_{u}$
such that $T_{0}:=T\mid _{H_{0}}$ is completely non-unitary and $%
T_{u}:=T\mid _{H_{u}}$ is unitary \cite[Ch.I, Theorem 3.2]{19}. It can be
seen that%
\begin{equation*}
\sigma \left( T_{u}\right) \subseteq \sigma \left( T\right) \cap \mathbb{T}.
\end{equation*}%
Let $f$ be in $H_{\sigma \left( T\right) \cap \mathbb{T}}^{\infty }$ with
continuous extension $\widetilde{f}$ to $\mathbb{D}\cup (\sigma \left(
T\right) \cap \mathbb{T)}$. As in \cite{10}, we can define $f\left( T\right)
\in B\left( H\right) $ by%
\begin{equation*}
f\left( T\right) =f\left( T_{0}\right) \oplus \widetilde{f}\left(
T_{u}\right) ,
\end{equation*}%
where $f\left( T_{0}\right) $ is given by the Nagy-Foias functional calculus
and
\begin{equation*}
\widetilde{f}\left( T_{u}\right) =\left( \widetilde{f}\mid _{\sigma \left(
T\right) \cap \mathbb{T}}\right) \left( T_{u}\right) .
\end{equation*}%
It can be seen that
\begin{equation*}
\left\Vert f\left( T\right) \right\Vert \leq \left\Vert f\right\Vert
_{\infty }\text{, \ }\forall f\in H_{\sigma \left( T\right) \cap \mathbb{T}%
}^{\infty }.
\end{equation*}%
Further, by the Gamelin-Garnett theorem \cite{9}, there exists a sequence $%
\left\{ f_{n}\right\} $ in $H^{\infty }$ such that each $f_{n}$ has an
analytic extension $g_{n}$ to a neighborhood $O_{n}$ of $\mathbb{D}\cup
(\sigma \left( T\right) \cap \mathbb{T)}$ and%
\begin{equation*}
\lim_{n\rightarrow \infty }\left\Vert f_{n}-f\right\Vert _{\infty }=0.
\end{equation*}%
Then, $g_{n}\left( T\right) $ can be defined by the Riesz-Dunford functional
calculus. Since $f_{n}\left( T\right) =g_{n}\left( T\right) \in A_{T}$ and
\begin{equation*}
\left\Vert f_{n}\left( T\right) -f\left( T\right) \right\Vert \leq
\left\Vert f_{n}-f\right\Vert _{\infty }\rightarrow 0,
\end{equation*}%
we have that $f\left( T\right) \in A_{T}$. Moreover,
\begin{equation*}
\widehat{f\left( T\right) }\left( \xi \right) =\widetilde{f}\left( \xi
\right) \text{, \ }\forall \xi \in \sigma \left( T\right) \cap \mathbb{T}.
\end{equation*}

As a consequence of Theorem 3.4 we have the following:

\begin{corollary}
Let $T$ be a contraction on a Hilbert space. If $f\in H_{\sigma \left(
T\right) \cap \mathbb{T}}^{\infty }$ with continuous extension $\widetilde{f}
$ to $\mathbb{D}\cup (\sigma \left( T\right) \cap \mathbb{T)},$ then
\begin{equation*}
\lim_{n\rightarrow \infty }\left\Vert T^{n}f\left( T\right) \right\Vert
=\sup_{\xi \in \sigma \left( T\right) \cap \mathbb{T}}\left\vert \widetilde{f%
}\left( \xi \right) \right\vert .
\end{equation*}
\end{corollary}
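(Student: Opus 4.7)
The plan is to deduce this as a straightforward consequence of Theorem 3.4 once we know that $f(T) \in A_T$ and can identify its Gelfand transform on $\sigma(T) \cap \mathbb{T}$. Fortunately, both facts are already supplied in the discussion immediately preceding the corollary, so the role of the proof is mainly to tie these pieces together.

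First I would recall the construction of $f(T)$: using the canonical decomposition $H = H_0 \oplus H_u$ with $T_0$ completely non-unitary and $T_u$ unitary, the operator $f(T) = f(T_0) \oplus \widetilde{f}(T_u)$ is well-defined for $f \in H^\infty_{\sigma(T) \cap \mathbb{T}}$. The Gamelin--Garnett theorem then yields a sequence $\{f_n\} \subset H^\infty$ with each $f_n$ extending analytically to a neighborhood of $\mathbb{D} \cup (\sigma(T) \cap \mathbb{T})$ and $\|f_n - f\|_\infty \to 0$. Since $f_n(T)$ is defined by the Riesz--Dunford calculus on such a neighborhood, $f_n(T) \in A_T$, and the norm estimate $\|f_n(T) - f(T)\| \le \|f_n - f\|_\infty$ shows $f(T) \in A_T$. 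Moreover, passing to the limit in $\widehat{f_n(T)}(\xi) = f_n(\xi) \to \widetilde{f}(\xi)$ for $\xi \in \sigma(T) \cap \mathbb{T}$ yields
\begin{equation*}
\widehat{f(T)}(\xi) = \widetilde{f}(\xi), \qquad \forall \xi \in \sigma(T) \cap \mathbb{T}.
\end{equation*}

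With $Q := f(T) \in A_T$ in hand, Theorem 3.4 applies directly and gives
\begin{equation*}
\lim_{n \to \infty} \|T^n f(T)\| = \sup_{\xi \in \sigma(T) \cap \mathbb{T}} \bigl|\widehat{f(T)}(\xi)\bigr| = \sup_{\xi \in \sigma(T) \cap \mathbb{T}} |\widetilde{f}(\xi)|,
\end{equation*}
which is precisely the conclusion. There is essentially no obstacle: the analytical content (the Gamelin--Garnett approximation, the identification of the Gelfand transform, and the key limit formula from Theorem 3.4) is already in place, and the corollary amounts to a clean substitution. If anything, the only point meriting a sentence of care is verifying that the convergence $\widehat{f_n(T)}(\xi) \to \widehat{f(T)}(\xi)$ is uniform enough on $\sigma(T) \cap \mathbb{T}$ to let one replace $\widehat{f(T)}(\xi)$ by $\widetilde{f}(\xi)$, but this is immediate from continuity of the Gelfand transform and the uniform convergence $f_n \to \widetilde{f}$ on $\mathbb{D} \cup (\sigma(T) \cap \mathbb{T})$.
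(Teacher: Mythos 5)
Your proposal is correct and follows exactly the route the paper intends: the discussion immediately preceding the corollary establishes $f(T)\in A_{T}$ via the Gamelin--Garnett approximation and identifies $\widehat{f(T)}(\xi )=\widetilde{f}(\xi )$ on $\sigma (T)\cap \mathbb{T}$, after which Theorem 3.4 gives the conclusion by direct substitution. There is no gap and no divergence from the paper's argument.
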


Now, let $\theta $ be an inner function and let $S_{\theta }$ be the model
operator on the model space $H_{\theta }^{2}.$ We put

\begin{equation*}
\Sigma _{u}\left( \theta \right) =\left\{ \xi \in \mathbb{T}:\underset{z\in
\mathbb{D},\text{ }z\rightarrow \xi }{\lim \inf }\left\vert \theta \left(
z\right) \right\vert =0\right\} .
\end{equation*}%
It follows from the Lipschitz-Moeller theorem \cite[p.81]{21} that
\begin{equation*}
\sigma \left( S_{\theta }\right) \cap \mathbb{T}=\Sigma _{u}\left( \theta
\right) .
\end{equation*}%
If $f\in H_{\Sigma _{u}\left( \theta \right) }^{\infty }$ with continuous
extension $\widetilde{f}$ to $\mathbb{D}\cup \Sigma _{u}\left( \theta
\right) ,$ then by Corollary 3.5,
\begin{equation*}
\lim_{n\rightarrow \infty }\left\Vert S_{\theta }^{n}f\left( S_{\theta
}\right) \right\Vert =\sup_{\xi \in \Sigma _{u}\left( \theta \right)
}\left\vert \widetilde{f}\left( \xi \right) \right\vert \text{.}
\end{equation*}%
On the other hand, by (3.1),
\begin{equation*}
\lim_{n\rightarrow \infty }\left\Vert S_{\theta }^{n}f\left( S_{\theta
}\right) \right\Vert =\left\Vert f\left( S_{\theta }\right) +K\left(
H_{\theta }^{2}\right) \right\Vert .
\end{equation*}%
Thus we have
\begin{equation*}
\left\Vert f\left( S_{\theta }\right) +K\left( H_{\theta }^{2}\right)
\right\Vert =\sup_{\xi \in \Sigma _{u}\left( \theta \right) }\left\vert
\widetilde{f}\left( \xi \right) \right\vert \text{.}
\end{equation*}%

From Theorem 3.1 and from the preceding identity we have the following:

\begin{corollary}
Let $\theta $ be an inner function and let $S_{\theta }$ be the model
operator on the model space $H_{\theta }^{2}.$ For an arbitrary $f\in
H_{\Sigma _{u}\left( \theta \right) }^{\infty }$ with continuous extension $%
\widetilde{f}$ to $\mathbb{D}\cup \Sigma _{u}\left( \theta \right) ,$ we have%
\begin{equation*}
\left\Vert f\left( S_{\theta }\right) +K\left( H_{\theta }^{2}\right)
\right\Vert =\textnormal{dist}\left( \overline{\theta }f,H^{\infty }+C\left(
\mathbb{T}\right) \right) =\sup_{\xi \in \Sigma _{u}\left( \theta \right)
}\left\vert \widetilde{f}\left( \xi \right) \right\vert .
\end{equation*}
\end{corollary}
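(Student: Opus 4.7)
The plan is to assemble the corollary by stitching together the two identities already established in the discussion immediately preceding its statement. Neither equality requires fresh work; rather the task is to check that the hypotheses on $f$ make both applicable and that the object $f(S_\theta)$ is interpreted consistently in the two frameworks.

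First, for the equality $\|f(S_\theta)+K(H^2_\theta)\| = \textnormal{dist}(\overline{\theta}f,\,H^\infty+C(\mathbb{T}))$, I would simply invoke Theorem 3.1. That result was proven for any $f\in H^\infty$, and since $H^\infty_{\Sigma_u(\theta)}\subseteq H^\infty$, it applies verbatim. No extra argument is needed beyond citing Theorem 3.1.

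Next, for the equality with $\sup_{\xi\in\Sigma_u(\theta)}|\widetilde{f}(\xi)|$, the plan is to apply Corollary 3.5 to the contraction $T=S_\theta$. This requires two ingredients that are recorded just above the statement: the Lipschitz--Moeller identification $\sigma(S_\theta)\cap\mathbb{T}=\Sigma_u(\theta)$, which matches the hypothesis $f\in H^\infty_{\sigma(S_\theta)\cap\mathbb{T}}$ with the given $f\in H^\infty_{\Sigma_u(\theta)}$; and the fact that $S_\theta$ is a contraction on the Hilbert space $H^2_\theta$, which lets us use Corollary 3.5. This yields
\begin{equation*}
\lim_{n\to\infty}\|S_\theta^n f(S_\theta)\|=\sup_{\xi\in\Sigma_u(\theta)}|\widetilde{f}(\xi)|.
\end{equation*}
Combining with formula (3.1), which was derived inside the proof of Theorem 3.1 and asserts
\begin{equation*}
\lim_{n\to\infty}\|S_\theta^n f(S_\theta)\|=\|f(S_\theta)+K(H^2_\theta)\|,
\end{equation*}
gives the second equality.

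The only point that takes a moment to verify is that the two \emph{a priori} distinct definitions of $f(S_\theta)$ agree: in Theorem 3.1 it is given by the Nagy--Foia\c{s} functional calculus $f(S_\theta)=P_\theta f(S)\!\mid_{H^2_\theta}$, while in Corollary 3.5 it is assembled via the Gamelin--Garnett approximation procedure and lies in $A_{S_\theta}$. I would note that $S_\theta$ is completely non-unitary (its defect operators are one-dimensional rank perturbations and it has no unitary part), so the unitary summand in the canonical decomposition is trivial and the two definitions collapse to the same operator. This is the only place that even hints at a genuine obstacle; once observed, the corollary follows by simply chaining the two equalities.
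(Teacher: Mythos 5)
Your proposal is correct and follows essentially the same route as the paper: the first equality is Theorem 3.1 applied to $f\in H^{\infty}_{\Sigma_{u}(\theta)}\subseteq H^{\infty}$, and the second comes from combining Corollary 3.5 (via the Lipschitz--Moeller identification $\sigma(S_{\theta})\cap\mathbb{T}=\Sigma_{u}(\theta)$) with formula (3.1), exactly as in the discussion preceding the corollary. Your added remark that the Nagy--Foia\c{s} and $A_{S_{\theta}}$ definitions of $f(S_{\theta})$ coincide because $S_{\theta}$ is completely non-unitary is a sensible consistency check that the paper leaves implicit.
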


\section{The sequence $\left\{ \frac{1}{n}\sum_{i=0}^{n-1}A^{i}TB^{i}\right%
\} $}

In this section, we give some results concerning convergence in operator
norm of the sequence $\left\{ \frac{1}{n}\sum_{i=0}^{n-1}A^{i}TB^{i}\right\}
$ for Hilbert space operators.

Let $X$ be a Banach space. It is easy to check that if $T\in B\left(
X\right) $ is power bounded, then
\begin{equation*}
\overline{\left( T-I\right) X}=\left\{ x\in X:\lim_{n\rightarrow \infty
}\left\Vert \frac{1}{n}\sum_{i=0}^{n-1}T^{i}x\right\Vert =0\right\} .
\end{equation*}

The following result is well known (for instance, see \cite[Ch.2, \S 2.1,
Theorems 1.2 and 1.3]{12}).

\begin{proposition}
Let $T\in B\left( X\right) $ be power bounded and let $E$ be the set of all $%
x\in X$ such that the sequence $\left\{ \frac{1}{n}\sum_{i=0}^{n-1}T^{i}x%
\right\} $ converges strongly. Then, we have the decomposition%
\begin{equation*}
E=\overline{\left( T-I\right) X}\oplus \ker \left( T-I\right) .
\end{equation*}%
If $X$ is reflexive, then $E=X.$
\end{proposition}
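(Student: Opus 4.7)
The plan is to prove the two inclusions for $E$ separately and then address the reflexive case. Set $M := \sup_{n\geq 0}\|T^n\| < \infty$ and write $A_n x := \frac{1}{n}\sum_{i=0}^{n-1} T^i x$; the operators $A_n$ form a uniformly bounded family with $\|A_n\| \leq M$.

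First I would establish the inclusion $\ker(T-I) + \overline{(T-I)X} \subseteq E$ together with the explicit limits on each summand. For $x \in \ker(T-I)$ one has trivially $A_n x \equiv x$. For $x = (T-I)y$ the sum telescopes, giving $A_n x = \frac{1}{n}(T^n y - y) \to 0$ by power boundedness, and the uniform bound $\|A_n\|\leq M$ extends this to every $x \in \overline{(T-I)X}$. The same observation shows that the sum is direct, since an element of the intersection would satisfy simultaneously $A_n x \equiv x$ and $A_n x \to 0$.

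The key step is then $E \subseteq \ker(T-I) \oplus \overline{(T-I)X}$. Given $x \in E$ with $y := \lim_n A_n x$, the identity $(T-I)A_n = \frac{1}{n}(T^n - I)$ yields $\|(T-I)A_n x\| \to 0$, so continuity of $T$ forces $Ty = y$. To see that $x - y \in \overline{(T-I)X}$ I would invoke Hahn--Banach: any $\phi \in X^*$ vanishing on $(T-I)X$ satisfies $T^*\phi = \phi$, so $\phi(A_n x) = \phi(x)$ for every $n$; passing to the limit gives $\phi(x) = \phi(y)$. Thus every such $\phi$ annihilates $x-y$, forcing $x - y \in \overline{(T-I)X}$.

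For the reflexive case, the goal is $E = X$. Fix $x \in X$; the orbit $\{A_n x\}$ is bounded, so reflexivity yields a weakly convergent subsequence $A_{n_k} x \rightharpoonup y$. The telescoping bound $\|(T-I)A_n x\| \leq \frac{2M}{n}\|x\| \to 0$ together with the weak continuity of $T-I$ forces $(T-I)y = 0$, i.e.\ $y \in \ker(T-I)$, and the same Hahn--Banach argument again gives $x - y \in \overline{(T-I)X}$. The first paragraph then yields $A_n x \to y$ strongly, so $x \in E$. The main obstacle is the Hahn--Banach duality step that identifies $\overline{(T-I)X}$ through the $T^*$-fixed vectors; in the reflexive setting one additionally needs the weak-compactness extraction to produce a candidate limit before that duality argument closes the loop, while all other ingredients reduce to telescoping and uniform boundedness.
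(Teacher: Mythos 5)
Your proof is correct and complete. Note that the paper does not actually prove this proposition; it simply cites Krengel's \emph{Ergodic Theorems}, so there is no in-paper argument to compare against. What you have written is the classical Yosida-style proof of the mean ergodic theorem: the telescoping identity $(T-I)A_nx=\frac{1}{n}(T^n-I)x$ to show any limit of the averages is a fixed point, the Hahn--Banach duality step (functionals annihilating $(T-I)X$ are $T^*$-fixed, hence constant along the averages) to place $x-y$ in $\overline{(T-I)X}$, and weak sequential compactness of bounded sets to manufacture the candidate limit in the reflexive case. All steps check out, including the directness of the sum and the $\varepsilon/2$ passage from $(T-I)X$ to its closure via the uniform bound $\|A_n\|\leq M$.
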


Applying Proposition 4.1 to the operator $L_{A}R_{B}$ on the space $B\left(
X\right) $, we have the following:

\begin{corollary}
Let $A,B\in B\left( X\right) $ be two operators such that $\sup_{n\geq
0}\left( \left\Vert A^{n}\right\Vert \left\Vert B^{n}\right\Vert \right)
<\infty $ and $T\in B\left( X\right) .$ Then, the sequence $\left\{ \frac{1}{%
n}\sum_{i=0}^{n-1}A^{i}TB^{i}\right\} $ converges in operator norm if and
only if we have the decomposition $T=T_{0}+Q,$ where
\begin{equation*}
\left\Vert \frac{1}{n}\sum_{i=0}^{n-1}A^{i}T_{0}B^{i}\right\Vert \rightarrow
0\text{ and }AQB=Q.
\end{equation*}
\end{corollary}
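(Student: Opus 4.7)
The plan is to apply Proposition 4.1 directly to the single operator $\mathcal{L} := L_A R_B$ acting on the Banach space $Y := B(X)$ equipped with the operator norm. The point is that iterates of $\mathcal{L}$ are easy to compute: for every $T \in B(X)$,
\begin{equation*}
\mathcal{L}^i(T) = L_{A^i} R_{B^i}(T) = A^i T B^i,
\end{equation*}
so the Cesàro averages of $\mathcal{L}$ applied to $T$ are exactly the averages $\frac{1}{n}\sum_{i=0}^{n-1} A^i T B^i$ appearing in the statement, and the relevant topology (operator-norm convergence) coincides with the \emph{strong} topology on $Y = B(X)$.

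First I would verify the hypothesis of Proposition 4.1 for $\mathcal{L}$ on $Y$. The submultiplicativity bound
\begin{equation*}
\|\mathcal{L}^i(T)\| = \|A^i T B^i\| \leq \|A^i\|\,\|T\|\,\|B^i\|
\end{equation*}
together with the standing assumption $\sup_{n\geq 0}(\|A^n\|\|B^n\|) < \infty$ shows that $\|\mathcal{L}^i\|_{B(Y)} \leq \|A^i\|\|B^i\|$ is uniformly bounded in $i$, so $\mathcal{L}$ is power bounded on $Y$. Hence Proposition 4.1 applies.

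Next I would read off the two summands. The kernel of $\mathcal{L} - I$ on $Y$ is
\begin{equation*}
\ker(\mathcal{L} - I) = \{Q \in B(X) : AQB = Q\},
\end{equation*}
which is precisely the condition imposed on the second summand $Q$. For the first summand, I would invoke the identity stated in the excerpt just before Proposition 4.1, namely
\begin{equation*}
\overline{(\mathcal{L} - I)\,Y} = \Bigl\{T_0 \in Y : \Bigl\|\tfrac{1}{n}\sum_{i=0}^{n-1} \mathcal{L}^i(T_0)\Bigr\| \to 0\Bigr\} = \Bigl\{T_0 : \Bigl\|\tfrac{1}{n}\sum_{i=0}^{n-1} A^i T_0 B^i\Bigr\| \to 0\Bigr\},
\end{equation*}
valid because $\mathcal{L}$ is power bounded on $Y$. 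Feeding these two identifications into the decomposition
\begin{equation*}
E = \overline{(\mathcal{L} - I)Y} \oplus \ker(\mathcal{L} - I)
\end{equation*}
supplied by Proposition 4.1 yields exactly the stated characterization.

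I do not expect a genuine obstacle: the whole argument is a transcription of Proposition 4.1 to the space $B(X)$ with $\mathcal{L} = L_A R_B$. The only subtle point worth flagging is that Proposition 4.1 speaks of \emph{strong} convergence of Cesàro averages, and one has to be careful that the relevant norm on the ambient Banach space $Y$ is the operator norm on $B(X)$, so that strong convergence in $Y$ is exactly operator-norm convergence of the averages in the statement — this is automatic once $Y$ is taken with its operator-norm Banach space structure.
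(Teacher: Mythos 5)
Your proposal is correct and is precisely the paper's argument: the author also obtains this corollary by applying Proposition 4.1 to the power bounded operator $L_{A}R_{B}$ on the Banach space $B\left( X\right) $, identifying $\ker \left( L_{A}R_{B}-I\right) $ with $\left\{ Q:AQB=Q\right\} $ and $\overline{\left( L_{A}R_{B}-I\right) B\left( X\right) }$ with the set of $T_{0}$ whose Ces\`{a}ro averages tend to zero in operator norm. Nothing further is needed.
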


\begin{lemma}
Let $T\in B\left( X\right) $ be power bounded, $x\in X$ and assume that
\begin{equation*}
\lim_{n\rightarrow \infty }\left\Vert T^{n+1}x-T^{n}x\right\Vert =0.
\end{equation*}%
$\left( a\right) $ If the sequence $\left\{ \frac{1}{n}%
\sum_{i=0}^{n-1}T^{i}x\right\} $ converges strongly, then the sequence $%
\left\{ T^{n}x\right\} $ converges strongly $($to same element$),$ too.

$\left( b\right) $ If $X$ is reflexive, then the sequence $\left\{
T^{n}x\right\} $ converges strongly.
\end{lemma}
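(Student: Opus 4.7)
The plan is to prove (a) by a two-scale argument balancing the slow-oscillation hypothesis against the Cesàro convergence, and then to deduce (b) immediately from (a) via Proposition~4.1.

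For part (a), set $y:=\lim_n \frac{1}{n}\sum_{i=0}^{n-1} T^i x$ and $y_k := \frac{1}{k}\sum_{j=0}^{k-1} T^j x$, so $y_k \to y$. First I would verify $Ty=y$: applying $T$ to $y_n$ gives $Ty_n = y_n + \frac{1}{n}(T^n x - x)$, and the right-hand side converges to $y$ because $\|T^n x\|$ stays bounded by power-boundedness. Hence $Ty = y$, and consequently $T^n y = y$ for all $n$. This is the anchor behind the decomposition
\begin{equation*}
T^n x - y = (T^n x - T^n y_k) + T^n(y_k - y).
\end{equation*}

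The core observation is $T^n y_k = \frac{1}{k}\sum_{j=0}^{k-1} T^{n+j} x$, which gives
\begin{equation*}
T^n x - T^n y_k = \frac{1}{k}\sum_{j=0}^{k-1}\bigl(T^n x - T^{n+j} x\bigr).
\end{equation*}
Each difference $T^n x - T^{n+j} x$ telescopes into at most $j \le k-1$ consecutive increments $T^{m+1}x - T^m x$ with $m \geq n$, and the hypothesis $\|T^{m+1}x - T^m x\| \to 0$ makes these uniformly small as soon as $n$ is large. Thus for each fixed $k$, the term $\|T^n x - T^n y_k\|$ is arbitrarily small once $n$ is large enough. The second term is controlled by $\|T^n(y_k - y)\| \leq M\|y_k - y\|$, where $M := \sup_n\|T^n\|$, and this is small once $k$ is large, independently of $n$. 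Given $\varepsilon>0$, I would first pick $k$ so that $M\|y_k - y\| < \varepsilon/2$, then pick $N$ so that $\|T^n x - T^n y_k\| < \varepsilon/2$ for $n \geq N$; combining the two estimates yields $T^n x \to y$.

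For part (b), reflexivity together with power-boundedness ensures via Proposition~4.1 that $\bigl\{\frac{1}{n}\sum_{i=0}^{n-1} T^i x\bigr\}$ converges strongly for every $x$, so the hypothesis of (a) holds automatically and strong convergence of $\{T^n x\}$ follows at once. The only delicate point I foresee is managing the order of quantifiers in the telescoping estimate—one must fix $k$ before choosing $N$, since the number of increments summed in the telescoping bound grows with $k$—but once this is arranged correctly the argument closes cleanly.
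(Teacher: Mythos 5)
Your proof is correct, and for part (a) it takes a genuinely different route from the paper's. The paper works structurally: it introduces the closed $T$-invariant subspace $F:=\{y\in X:\lim_{n}\|T^{n+1}y-T^{n}y\|=0\}$, notes that power boundedness forces $\|T^{n}z\|\to 0$ for every $z\in\overline{(T-I)F}$, and then applies the mean ergodic decomposition of Proposition 4.1 inside $F$ to split $x=x_{0}+y_{0}$ with $x_{0}\in\overline{(T-I)F}$ and $Ty_{0}=y_{0}$, so that $T^{n}x=T^{n}x_{0}+y_{0}\to y_{0}$. You bypass Proposition 4.1 entirely in part (a): after verifying $Ty=y$ for the Ces\`{a}ro limit $y$ (your computation $Ty_{n}=y_{n}+\frac{1}{n}(T^{n}x-x)$ is right), you compare $T^{n}x$ with $T^{n}y_{k}=\frac{1}{k}\sum_{j=0}^{k-1}T^{n+j}x$, telescope each $T^{n}x-T^{n+j}x$ into at most $k-1$ increments of size $\sup_{m\geq n}\|T^{m+1}x-T^{m}x\|$, and close with a two-parameter $\varepsilon$-argument; your insistence on fixing $k$ before choosing $N$ is exactly the point that makes this legitimate. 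The paper's decomposition approach is shorter given that Proposition 4.1 is already on the table and makes the structure of the limit ($y_{0}\in\ker(T-I)$, $x-y_{0}\in\overline{(T-I)F}$) explicit; your argument is more elementary and self-contained, using only the existence of the Ces\`{a}ro limit and power boundedness, and in particular only needs the hypothesis $\|T^{n+1}x-T^{n}x\|\to 0$ at the single vector $x$ rather than on a whole invariant subspace. Part (b) is handled identically in both proofs, via Proposition 4.1 in the reflexive case.
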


\begin{proof}
(a) Notice that
\begin{equation*}
F:=\left\{ y\in X:\lim_{n\rightarrow \infty }\left\Vert
T^{n+1}y-T^{n}y\right\Vert =0\right\}
\end{equation*}%
is a closed $T-$invariant subspace and $x\in F.$ Since $T$ is power bounded
and%
\begin{equation*}
\left\Vert T^{n}(T-I)y\right\Vert =\left\Vert T^{n+1}y-T^{n}y\right\Vert
\rightarrow 0\text{, \ }\forall y\in F,
\end{equation*}%
we have $\left\Vert T^{n}y\right\Vert \rightarrow 0$ for all $y\in \overline{%
\left( T-I\right) F}.$ Now, let $E$ be the set of all $y\in F$ such that the
sequence $\left\{ \frac{1}{n}\sum_{i=0}^{n-1}T^{i}y\right\} $ converges
strongly. Since $x\in E,$ by Proposition 4.1 we have the decomposition $%
x=x_{0}+y_{0},$ where $x_{0}\in \overline{\left( T-I\right) F}$ and $%
Ty_{0}=y_{0}.$ As $T^{n}x=T^{n}x_{0}+y_{0}$ and $\left\Vert
T^{n}x_{0}\right\Vert \rightarrow 0,$ we have $\left\Vert
T^{n}x-y_{0}\right\Vert \rightarrow 0.$ Clearly, $\frac{1}{n}%
\sum_{i=0}^{n-1}T^{i}x\rightarrow y_{0}$ strongly.

(b) If $X$ is reflexive, then by Proposition 4.1 the sequence $\left\{ \frac{%
1}{n}\sum_{i=0}^{n-1}T^{i}x\right\} $ converges strongly for every $x\in X.$
By (a), the sequence $\left\{ T^{n}x\right\} $ converges strongly.
\end{proof}

Next, we have the following:

\begin{theorem}
Let $A$ and $B^{\ast }$ be two essentially isometric operators on $H$ and $%
T\in B\left( H\right) .$ Assume that:

$\left( i\right) \ \left\Vert A^{n}x\right\Vert \rightarrow 0$ and $%
\left\Vert B^{\ast n}x\right\Vert \rightarrow 0$ for all $x\in H;$

$\left( ii\right) $ $ATB-T\in K\left( H\right) .$

Then, the sequence $\left\{ \frac{1}{n}\sum_{i=0}^{n-1}A^{i}TB^{i}\right\} $
converges in operator norm if and only if we have the decomposition $%
T=T_{0}+K,$ where $AT_{0}B=T_{0}$ and $K\in K\left( H\right) .$
\end{theorem}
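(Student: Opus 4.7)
The plan is to exploit hypothesis (ii)---the compactness of $K_{0}:=ATB-T$---to obtain a telescoping identity that exhibits each Cesaro average as a compact perturbation of $T$. No Tauberian-type passage from Cesaro convergence back to convergence of $\{A^{n}TB^{n}\}$ is needed (and would be hard to arrange, since the hypotheses do not force $L_{A}R_{B}$ to be power bounded), so Theorem 2.1 is not invoked.

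For the easy direction, suppose $T=T_{0}+K$ with $AT_{0}B=T_{0}$ and $K\in K(H)$. Induction gives $A^{i}T_{0}B^{i}=T_{0}$, hence
$$\frac{1}{n}\sum_{i=0}^{n-1}A^{i}TB^{i}=T_{0}+\frac{1}{n}\sum_{i=0}^{n-1}A^{i}KB^{i}.$$
By Lemma 2.3(a), $\|A^{i}KB^{i}\|\to 0$, and the Cesaro average of a null real sequence tends to zero; so the right side converges to $T_{0}$ in operator norm.

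For the converse, assume $S_{n}:=\frac{1}{n}\sum_{i=0}^{n-1}A^{i}TB^{i}\to T_{0}$ in operator norm. Telescoping gives $A^{i}TB^{i}-T=\sum_{j=0}^{i-1}A^{j}K_{0}B^{j}$, and swapping the order of summation yields
$$T-S_{n}=-\frac{1}{n}\sum_{j=0}^{n-2}(n-1-j)\,A^{j}K_{0}B^{j}.$$
Each summand is compact (bounded operators sandwiching $K_{0}$), so $T-S_{n}\in K(H)$ for every $n$. Because $K(H)$ is closed in operator norm and $S_{n}\to T_{0}$, I obtain $T-T_{0}\in K(H)$. Setting $K:=T-T_{0}$ yields the required decomposition $T=T_{0}+K$.

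It remains to verify $AT_{0}B=T_{0}$. A direct computation gives
$$AS_{n}B-S_{n}=\frac{1}{n}\bigl(A^{n}TB^{n}-T\bigr)=\frac{1}{n}\sum_{i=0}^{n-1}A^{i}K_{0}B^{i}.$$
Applying Lemma 2.3(a) once more, $\|A^{i}K_{0}B^{i}\|\to 0$, so the right side vanishes in operator norm; the left side meanwhile converges to $AT_{0}B-T_{0}$, giving $AT_{0}B=T_{0}$. The only calculation needing any care is the double-sum rearrangement (fix $j$ and count $i\in\{j+1,\dots,n-1\}$); there is no substantive obstacle, since hypothesis (ii) is exactly what makes the telescoping work.
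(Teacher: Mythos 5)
Your proof is correct, and it takes a genuinely different route from the paper's. The paper argues Tauberian-style: hypothesis (ii) plus Lemma 2.3(a) gives $\left\Vert (L_{A}R_{B})^{n+1}T-(L_{A}R_{B})^{n}T\right\Vert \rightarrow 0$, then Lemma 4.3 (which rests on the mean ergodic decomposition of Proposition 4.1) upgrades norm convergence of the Ces\`{a}ro averages to norm convergence of $\left\{ A^{n}TB^{n}\right\} $ itself, and finally Theorem 2.1 --- whose proof uses the limit-operator machinery and the essential isometry of $A$ and $B^{\ast }$ --- produces the decomposition. You instead telescope: $T-S_{n}$ is an explicit finite combination of the compact operators $A^{j}(ATB-T)B^{j}$, hence compact, so norm-closedness of $K(H)$ gives $T-T_{0}\in K(H)$ directly, and $AT_{0}B=T_{0}$ falls out of the one-line identity $AS_{n}B-S_{n}=\frac{1}{n}(A^{n}TB^{n}-T)$ together with Lemma 2.3(a). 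This is more elementary (no Theorem 2.1, no Lemma 4.3, no limit operators) and in fact strictly stronger: nowhere do you use that $A$ and $B^{\ast }$ are essentially isometric, only hypotheses (i) and (ii), so your argument proves the theorem without that standing assumption. One side remark of yours is wrong, though it does not affect your proof: the hypotheses \emph{do} force $L_{A}R_{B}$ to be power bounded, since $\left\Vert A^{n}x\right\Vert \rightarrow 0$ and $\left\Vert B^{\ast n}x\right\Vert \rightarrow 0$ give $\sup_{n}\left\Vert A^{n}\right\Vert <\infty $ and $\sup_{n}\left\Vert B^{n}\right\Vert =\sup_{n}\left\Vert B^{\ast n}\right\Vert <\infty $ by the uniform boundedness principle, which is exactly how the paper justifies invoking Lemma 4.3.
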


\begin{proof}
Assume that the sequence $\left\{ \frac{1}{n}\sum_{i=0}^{n-1}A^{i}TB^{i}%
\right\} $ converges in operator norm. Since $ATB-T\in K\left( H\right) ,$
by Lemma 2.3,%
\begin{equation*}
\lim_{n\rightarrow \infty }\left\Vert \left( L_{A}R_{B}\right)
^{n+1}T-\left( L_{A}R_{B}\right) ^{n}T\right\Vert =\lim_{n\rightarrow \infty
}\left\Vert A^{n}\left( ATB-T\right) B^{n}\right\Vert =0.
\end{equation*}%
Notice also that the operator $L_{A}R_{B}$ is power bounded. Applying Lemma
4.3 to the operator $L_{A}R_{B}$ on the space $B\left( X\right) $, we obtain
that the sequence $\left\{ A^{n}TB^{n}\right\} $ converges in operator norm.
By Theorem 2.1, $T=T_{0}+K,$ where $AT_{0}B=T_{0}$ and $K\in K\left(
H\right) .$

If $T=T_{0}+K,$ where $AT_{0}B=T_{0}$ and $K\in K\left( H\right) ,$ then we
have
\begin{equation*}
\frac{1}{n}\sum_{i=0}^{n-1}A^{i}TB^{i}=T_{0}+\frac{1}{n}%
\sum_{i=0}^{n-1}A^{i}KB^{i}.
\end{equation*}%
By Lemma 2.3, $\left\Vert A^{n}KB^{n}\right\Vert \rightarrow 0$ and
therefore $\left\Vert \frac{1}{n}\sum_{i=0}^{n-1}A^{i}KB^{i}\right\Vert
\rightarrow 0.$ Thus
\begin{equation*}
\frac{1}{n}\sum_{i=0}^{n-1}A^{i}TB^{i}\rightarrow T_{0}\text{ in operator
norm.}
\end{equation*}
\end{proof}

\begin{corollary}
Assume that the operators $A,T\in B\left( H\right) $ satisfy the following
conditions:

$\left( i\right) $ $I-AA^{\ast }\in K\left( H\right) ;$

$\left( ii\right) $ $\left\Vert A^{\ast n}x\right\Vert \rightarrow 0$ for
all $x\in H;$

$\left( iii\right) $ $A^{\ast }TA-T\in K\left( H\right) .$

Then, the sequence $\left\{ \frac{1}{n}\sum_{i=0}^{n-1}A^{\ast
i}TA^{i}\right\} $ converges in operator norm if and only if we have the
decomposition $T=T_{0}+K,$ where $A^{\ast }T_{0}A=T_{0}$ and $K\in K\left(
H\right) .$
\end{corollary}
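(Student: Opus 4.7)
The plan is to obtain this corollary as a direct specialization of Theorem 4.4, taking the two distinguished operators in that theorem to be $A^{\ast}$ (playing the role of ``$A$'') and $A$ (playing the role of ``$B$''). Under this substitution the three numbered hypotheses of the corollary translate one-for-one into the hypotheses of Theorem 4.4, so the argument reduces to a careful bookkeeping exercise rather than any new analysis.

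First I would verify the ``essentially isometric'' hypothesis required of both $A^{\ast}$ (in the role of $A$) and of $A^{\ast}$ (the adjoint of the operator playing $B$). The first demands $I - (A^{\ast})^{\ast}(A^{\ast}) = I - AA^{\ast} \in K(H)$, and the second likewise demands $I - AA^{\ast} \in K(H)$; both collapse onto condition (i) of the corollary. Next I would check the two stability conditions of Theorem 4.4, namely $\|(A^{\ast})^{n}x\| \to 0$ and $\|A^{\ast n}x\| \to 0$ for every $x \in H$; these are identical and are exactly hypothesis (ii). Finally the compactness requirement (ii) of Theorem 4.4, $ATB - T \in K(H)$, becomes under the substitution $A^{\ast}TA - T \in K(H)$, which is precisely hypothesis (iii).

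With all the hypotheses of Theorem 4.4 thus in hand, its conclusion supplies at once the stated equivalence: the sequence $\{\frac{1}{n}\sum_{i=0}^{n-1} A^{\ast i} T A^{i}\}$ converges in operator norm if and only if $T = T_{0} + K$ with $K \in K(H)$ and $AT_{0}B = T_{0}$, where the latter invariance relation reads $A^{\ast}T_{0}A = T_{0}$ under our identification. I do not anticipate any substantive obstacle, as the corollary is a pure restatement of Theorem 4.4; the only thing that needs care is the observation that with $B = A$ the two essentially-isometric conditions and the two strong-stability conditions of Theorem 4.4 each collapse onto a single hypothesis of the corollary.
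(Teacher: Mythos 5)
Your proposal is correct and is exactly the derivation the paper intends: Corollary 4.5 is stated without proof as the specialization of Theorem 4.4 to $A\mapsto A^{\ast}$, $B\mapsto A$, and your verification that both essential-isometry conditions and both stability conditions collapse onto hypotheses (i) and (ii), with $ATB-T\in K(H)$ becoming hypothesis (iii), is the whole argument.
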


The proof of the following lemma is straightforward and will be omitted.

\begin{lemma}
Let $U$ be an essentially unitary operator on $H.$ Then, $T\in B\left(
H\right) $ is an essential commutant of $U$ if and only if $U^{\ast }TU-T\in
K\left( H\right) .$
\end{lemma}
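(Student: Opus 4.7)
The plan is to unpack both implications by simple algebraic manipulations, using only that $I-U^{\ast}U$ and $I-UU^{\ast}$ lie in $K(H)$ and that $K(H)$ is a two-sided ideal in $B(H)$.

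First I would handle the forward direction. Assume $TU-UT\in K(H)$. Multiply on the left by $U^{\ast}$ to get $U^{\ast}TU-U^{\ast}UT\in K(H)$. Now write $U^{\ast}UT-T=-(I-U^{\ast}U)T$, which is compact because $I-U^{\ast}U\in K(H)$ and $K(H)$ is a right ideal. Adding these two compact terms gives $U^{\ast}TU-T\in K(H)$.

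For the reverse direction I would argue symmetrically: assuming $U^{\ast}TU-T\in K(H)$, multiply on the left by $U$ to obtain $UU^{\ast}TU-UT\in K(H)$, and observe that $TU-UU^{\ast}TU=(I-UU^{\ast})TU$ is compact since $I-UU^{\ast}\in K(H)$. Summing yields $TU-UT\in K(H)$.

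There is no serious obstacle: the lemma is purely algebraic and the two essential-unitarity relations $I-U^{\ast}U,I-UU^{\ast}\in K(H)$ do all the work. The only point to keep in mind is that one needs both relations, one for each direction, which is precisely why the essentially \emph{unitary} hypothesis (rather than merely essentially isometric) is invoked.
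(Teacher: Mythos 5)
Your proof is correct, and it is precisely the straightforward algebraic argument the paper has in mind when it states that the proof "is straightforward and will be omitted": both directions follow by multiplying by $U^{\ast}$ or $U$ and absorbing the compact defects $I-U^{\ast}U$ and $I-UU^{\ast}$ using the ideal property of $K(H)$. Your closing remark that one defect relation is needed for each direction is also an accurate observation about why essential unitarity, not just essential isometry, is assumed.
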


Recall that $T\in B\left( H^{2}\right) $ is an \textit{essentially Toeplitz
operator }if\textit{\ }%
\begin{equation*}
S^{\ast }TS-T\in K\left( H^{2}\right) .
\end{equation*}%
By Lemma 4.6, $T\in B\left( H^{2}\right) $ is an essentially Toeplitz
operator if and only if $T$ is an essential commutant of the unilateral
shift operator $S.$ On the other hand, essential commutant of the unilateral
shift is a $C^{\ast }-$algebra. Consequently, the set of all essentially
Toeplitz operators is a $C^{\ast }-$algebra and therefore contains the $%
C^{\ast }-$algebra generated by all Toeplitz operators.

\begin{corollary}
An \textit{essentially Toeplitz operator }$T$ is a compact perturbation of a
Toeplitz operator if and only if the sequence $\left\{ \frac{1}{n}%
\sum_{i=0}^{n-1}S^{\ast i}TS^{i}\right\} $ converges in operator norm.
\end{corollary}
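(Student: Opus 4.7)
The plan is to deduce this directly from Corollary 4.5 applied to the unilateral shift $A = S$ on $H = H^{2}$. First I would verify that $S$ satisfies the three hypotheses of Corollary 4.5. Condition (i), $I - SS^{\ast} \in K(H^{2})$, holds because $I - SS^{\ast}$ is the rank-one orthogonal projection onto the constants. Condition (ii), $\|S^{\ast n}f\| \to 0$ for every $f \in H^{2}$, is the standard fact that the backward shift is of class $C_{\cdot 0}$ (equivalently, $\bigcap_{n} S^{n} H^{2} = \{0\}$ together with $\|S^{\ast n}f\|^{2} = \sum_{k \geq n} |\hat{f}(k)|^{2}$). Condition (iii), $S^{\ast} T S - T \in K(H^{2})$, is precisely the hypothesis that $T$ is essentially Toeplitz.

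With these verifications, Corollary 4.5 applies and gives that $\{\tfrac{1}{n}\sum_{i=0}^{n-1} S^{\ast i} T S^{i}\}$ converges in operator norm if and only if $T = T_{0} + K$ with $S^{\ast} T_{0} S = T_{0}$ and $K \in K(H^{2})$. To finish, I would invoke the Brown-Halmos characterization (recalled in the introduction): an operator $T_{0} \in B(H^{2})$ satisfies $S^{\ast} T_{0} S = T_{0}$ if and only if $T_{0}$ is a Toeplitz operator. Substituting this equivalence into the conclusion of Corollary 4.5 yields exactly the statement of Corollary 4.7.

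There is essentially no obstacle here: the corollary is a direct specialization. The only bookkeeping point worth double-checking is the verification of condition (ii) for $S$ on $H^{2}$, which follows instantly from the Parseval expansion, so the proof reduces to a one-line citation of Corollary 4.5 combined with the Brown-Halmos theorem.
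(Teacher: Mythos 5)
Your proposal is correct and is exactly the route the paper intends: Corollary 4.7 is stated as an immediate specialization of Corollary 4.5 to $A=S$ (whose hypotheses you verify correctly), with the Brown--Halmos characterization $S^{\ast}T_{0}S=T_{0}$ identifying the operators $T_{0}$ as the Toeplitz operators. Nothing is missing.
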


In \cite{20}, it was proved that if the composition operator $C_{\phi }$ on $%
H^{2}$ is neither compact nor the identity, then $C_{\phi }$ cannot be
compact perturbation of a Toeplitz operator.

\begin{corollary}
If $C_{\phi }$ is a composition operator on $H^{2},$ then the sequence
\begin{equation*}
\left\{ \frac{1}{n}\sum_{i=0}^{n-1}S^{\ast i}C_{\phi }S^{i}\right\}
\end{equation*}
converges in operator norm if and only if either $C_{\phi }$ is compact or
the identity operator.
\end{corollary}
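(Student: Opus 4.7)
For the easy direction, if $C_{\phi}=I$ then $S^{\ast i}C_{\phi}S^{i}=I$ for every $i$ and the Cesaro average is the constant operator $I$. If $C_{\phi}$ is compact, Lemma 2.3(a) applied with $A=S^{\ast}$ and $B=S$ (so that $\|S^{\ast n}x\|\to 0$ suffices to verify the hypotheses) gives $\|S^{\ast n}C_{\phi}S^{n}\|\to 0$, and the Cesaro average of a norm-null sequence is itself norm-null.

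For the converse, suppose the Cesaro average converges in operator norm to some $Q\in B(H^{2})$. Pushing $S^{\ast}(\cdot)S$ through the limit, and noting that it only shifts the average by an $O(1/n)$ correction, yields $S^{\ast}QS=Q$, so by the Brown and Halmos theorem $Q$ is a Toeplitz operator. The plan is to show $C_{\phi}-Q\in K(H^{2})$: once this is in hand, $C_{\phi}$ is a compact perturbation of the Toeplitz operator $Q$, and the Nazarov-Shapiro result from \cite{20} recalled just before the corollary (a composition operator that is neither compact nor the identity cannot be a compact perturbation of a Toeplitz operator) forces $C_{\phi}$ to be either compact or the identity. The natural instrument for extracting compactness of $C_{\phi}-Q$ is Corollary 4.5 with $A=S$: its conditions (i) and (ii) are immediate for the shift, and condition (iii), i.e.\ $S^{\ast}C_{\phi}S-C_{\phi}\in K(H^{2})$, would produce a decomposition $C_{\phi}=T_{0}+K$ with $T_{0}$ Toeplitz and $K$ compact; uniqueness of the Toeplitz part then identifies $T_{0}$ with $Q$ and $K$ with $C_{\phi}-Q$.

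The main obstacle I expect is that condition (iii) is not automatic for composition operators: a direct calculation shows, for instance, $(S^{\ast}C_{\phi}S-C_{\phi})e_{n}=e_{2n+1}-e_{2n}$ when $\phi(z)=z^{2}$, and this is not a compact operator. When $C_{\phi}$ fails to be essentially Toeplitz, Corollary 4.5 is unavailable and one has to rule out Cesaro convergence directly. My strategy would be first to observe that the Cesaro average tends to $0$ on every polynomial (so the only candidate for an operator-norm limit is $Q=0$), and then to exhibit unit test vectors $f_{N}$ adapted to the boundary behavior of $\phi$ on which $\|\frac{1}{N}\sum_{i=0}^{N-1}S^{\ast i}C_{\phi}S^{i}f_{N}\|$ is bounded away from $0$; for $\phi(z)=z^{2}$ the explicit choice $f_{N}=N^{-1/2}\sum_{k=0}^{N-1}e_{2k}$ does the job and serves as a model. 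Carrying out this construction uniformly over all non-essentially-Toeplitz composition symbols is the delicate point of the argument.
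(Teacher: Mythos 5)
Your easy direction is fine, and your diagnosis of the obstacle is sharp and correct: for $\phi (z)=z^{2}$ one has $(S^{\ast }C_{\phi }S-C_{\phi })z^{n}=z^{2n+1}-z^{2n}$, which sends an orthonormal basis to an orthogonal sequence of vectors of norm $\sqrt{2}$, so $C_{z^{2}}$ is \emph{not} essentially Toeplitz and neither Corollary 4.5 nor Corollary 4.7 can be invoked for it. For comparison: the paper supplies no proof of this corollary at all; it is stated immediately after Corollary 4.7 and the quoted Nazarov--Shapiro result, so the intended argument is exactly the one in your second paragraph, and your example shows that this one-line derivation does not literally cover all composition operators. In that respect your critique is more careful than the source.

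Nevertheless, your proposal is not a proof. All of the content of the hard direction for non-essentially-Toeplitz symbols is concentrated in your last paragraph, and there you only handle $\phi (z)=z^{2}$ explicitly (where the test vectors $f_{N}=N^{-1/2}\sum_{k=0}^{N-1}e_{2k}$ do work, since the averages tend to $0$ strongly while $\Vert \frac{1}{N}\sum_{i=0}^{N-1}S^{\ast i}C_{z^{2}}S^{i}f_{N}\Vert $ stays bounded below) before declaring the general case ``the delicate point.'' What is missing is precisely the theorem: for an arbitrary $\phi $ that is neither compact, nor the identity, nor essentially Toeplitz, you must either (i) show that operator-norm convergence of the averages forces $S^{\ast }C_{\phi }S-C_{\phi }\in K(H^{2})$, after which Corollary 4.7 and Nazarov--Shapiro finish the job, or (ii) construct, uniformly in $\phi $, test vectors witnessing non-convergence; neither is supplied, and no concrete mechanism is proposed beyond the single example. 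Note also that Corollary 4.2 alone only yields $C_{\phi }=T_{0}+Q$ with $Q$ Toeplitz and $\Vert \frac{1}{n}\sum_{i=0}^{n-1}S^{\ast i}T_{0}S^{i}\Vert \rightarrow 0$, which does not by itself make $T_{0}$ compact, so that route does not close the gap either. As it stands, the proposal establishes the corollary only for essentially Toeplitz composition operators together with the one model case $\phi (z)=z^{2}$.
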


Recall that the class of compact composition operators are sufficiently
large (for instance, see \cite{16}).

Following \cite{15}, we could define an \textit{asymptotic Toeplitz operator
in the Calkin} \textit{algebra} as an operator $T\in B\left( H^{2}\right) $
such that the sequence $\left\{ S^{\ast n}TS^{n}\right\} $ converges in the
Calkin algebra.

The following result, which seems to be unnoticed (see, \cite[p.745]{15}).

\begin{proposition}
Every \textit{asymptotic Toeplitz operator in the Calkin} \textit{algebra}
is an essentially \textit{Toeplitz operator.}
\end{proposition}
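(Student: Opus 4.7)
The plan is to pass to the Calkin algebra $\mathcal{C}:=B(H^{2})/K(H^{2})$ and exploit the fact that the image of $S$ is a \emph{unitary} element there. Write $\pi$ for the quotient map and set $u:=\pi(S)$, $t:=\pi(T)$. Since $S^{\ast}S=I$ and $I-SS^{\ast}$ is the rank-one projection onto the constants (hence compact), we have $u^{\ast}u=1=uu^{\ast}$, so $u$ is unitary in the $C^{\ast}$-algebra $\mathcal{C}$.

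The hypothesis says precisely that $\{u^{\ast n}tu^{n}\}$ converges in $\mathcal{C}$; in particular its consecutive differences tend to zero, and a telescoping factorization gives
\begin{equation*}
u^{\ast (n+1)}tu^{n+1}-u^{\ast n}tu^{n}=u^{\ast n}(u^{\ast}tu-t)u^{n}\longrightarrow 0 \quad (n\to\infty).
\end{equation*}
Because $u$ is unitary in the $C^{\ast}$-algebra $\mathcal{C}$, conjugation by $u^{n}$ is an isometric $\ast$-automorphism, so $\|u^{\ast n}(u^{\ast}tu-t)u^{n}\|=\|u^{\ast}tu-t\|$ for every $n$. Combined with the display above, this forces $\|u^{\ast}tu-t\|=0$, i.e.\ $u^{\ast}tu=t$, and multiplying by $u$ on the left rewrites this as $ut=tu$. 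Unwinding through $\pi$, we get $ST-TS\in K(H^{2})$.

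By Lemma 4.6, applied to the essentially unitary operator $U=S$, the condition $ST-TS\in K(H^{2})$ is equivalent to $S^{\ast}TS-T\in K(H^{2})$, which is exactly the definition of an essentially Toeplitz operator. There is no genuine obstacle in this argument: the whole proof turns on the single observation that $\pi(S)$ is unitary, which upgrades the conjugation map $x\mapsto u^{\ast}xu$ to an isometry of $\mathcal{C}$ and thereby promotes the soft information ``consecutive differences of a convergent sequence vanish'' directly to the strong pointwise identity $u^{\ast}tu=t$ in the Calkin algebra.
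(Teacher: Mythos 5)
Your proof is correct. Every step checks out: $\pi(S)$ is unitary in the Calkin algebra because $S^{\ast}S=I$ and $I-SS^{\ast}$ is rank one; convergence of $\{u^{\ast n}tu^{n}\}$ forces its consecutive differences $u^{\ast n}(u^{\ast}tu-t)u^{n}$ to vanish; and inner conjugation by a unitary is an isometric $\ast$-automorphism, so these differences all have the fixed norm $\|u^{\ast}tu-t\|$, which must therefore be zero. (The detour through $ut=tu$ and Lemma 4.6 is harmless but unnecessary: $u^{\ast}tu=t$ already reads $S^{\ast}TS-T\in K(H^{2})$.)

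Your route is genuinely different from the paper's. The paper works with the limit operators $\widehat{S^{\ast}},\widehat{T},\widehat{S},\widehat{Q}$ of Proposition 2.2 (a concrete Hilbert-space realization in which the Calkin norm becomes the operator norm and essential isometries become isometries), and argues by uniqueness of limits: from $\widehat{S^{\ast}}^{\,n}\widehat{T}\widehat{S}^{\,n}\to\widehat{Q}$ it deduces $\widehat{S^{\ast}}\widehat{Q}\widehat{S}=\widehat{Q}$, i.e. $S^{\ast}QS-Q\in K(H^{2})$ for the \emph{limit} $Q$ of the sequence; to land on the stated conclusion about $T$ one still needs the observation that $T-Q\in K(H^{2})$, which the paper leaves implicit. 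Your argument avoids the auxiliary construction entirely, works inside the Calkin algebra itself, and delivers the relation $u^{\ast}tu=t$ for $T$ directly; as a bonus it shows the sequence $\{\pi(S^{\ast n}TS^{n})\}$ is actually constant, so its limit is $\pi(T)$ — a sharper statement than the proposition requires, obtained with less machinery.
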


\begin{proof}
If $T\in B\left( H^{2}\right) $ is an asymptotic Toeplitz operator in the
Calkin algebra,\textit{\ }then there is an operator $Q\in B\left(
H^{2}\right) $ such that
\begin{equation*}
\lim_{n\rightarrow \infty }\left\Vert S^{\ast n}TS^{n}-Q+K\left( H\right)
\right\Vert =0.
\end{equation*}%
Let $\widehat{S^{\ast }}$, $\widehat{T}$, $\widehat{S}$ and $\widehat{Q}$ be
the limit operators associated with $S^{\ast }$, $T$, $S$ and $Q,$
respectively. By Proposition 2.2,
\begin{equation*}
\lim_{n\rightarrow \infty }\left\Vert \widehat{S^{\ast }}^{n}\widehat{T}%
\widehat{S}^{n}-\widehat{Q}\right\Vert =0.
\end{equation*}%
Since
\begin{equation*}
\lim_{n\rightarrow \infty }\left\Vert \widehat{S^{\ast }}^{n+1}\widehat{T}%
\widehat{S}^{n+1}-\widehat{S^{\ast }}\widehat{Q}\widehat{S}\right\Vert =0,
\end{equation*}%
we have $\widehat{S^{\ast }}\widehat{Q}\widehat{S}=\widehat{Q}.$ By using
the same proposition again, we obtain that $S^{\ast }QS-Q\in K\left(
H^{2}\right) .$
\end{proof}

\section{Banach space operators}

In this section, we study convergence in operator norm of the sequence $%
\left\{ A^{n}TB^{n}\right\} $ for Banach space operators.

Let $X$ be a Banach space. For an arbitrary $T\in B\left( X\right) $ and $%
x\in X$, we define $\rho _{T}\left( x\right) $ to be the set of all $\lambda
\in
\mathbb{C}
$ for which there exists a neighborhood $U_{\lambda }$ of $\lambda $ with $%
u\left( z\right) $ analytic on $U_{\lambda }$ having values in $X$ such that
\begin{equation*}
\left( zI-T\right) u\left( z\right) =x\text{, \ }\forall z\in U_{\lambda }.
\end{equation*}%
This set is open and contains the resolvent set $\rho \left( T\right) $ of $T
$. By definition, the \textit{local spectrum} of $T$ at $x\in X$, denoted by
$\sigma _{T}\left( x\right) ,$ is the complement of $\rho _{T}\left(
x\right) $, so it is a compact subset of $\sigma \left( T\right) $. This
object is the most tractable if the operator $T$ has the \textit{%
single-valued extension property }(SVEP), i.e., for every open set $U$ in $%
\mathbb{C}
,$ the only analytic function $u:U\rightarrow X$ for which the equation $%
\left( zI-T\right) u\left( z\right) =0$ holds is the constant function $%
u\equiv 0$. If $T$ has SVEP, then $\sigma _{T}\left( x\right) \neq \emptyset
,$ whenever $x\in X\diagdown \left\{ 0\right\} $ \cite[Proposition 1.2.16]%
{13}. Note that the local spectrum of $T$ may be "very small" with respect
to its usual spectrum. To see this, let $\sigma $ be a "small" clopen part
of $\sigma \left( T\right) $. Let $P_{\sigma }$ be the spectral projection
associated with $\sigma $ and $X_{\sigma }:=P_{\sigma }X$. Then, $X_{\sigma }
$ is a closed $T-$invariant subspace of $X$ and $\sigma \left( T\mid
_{X_{\sigma }}\right) =\sigma $. It is easy to see that $\sigma _{T}\left(
x\right) \subseteq \sigma $ for every $x\in X_{\sigma }$.

If $T$ is power bounded, then clearly, $\sigma \left( T\right) \subset
\overline{\mathbb{D}\text{ }}$and $\sigma _{T}\left( x\right) \cap \mathbb{T}
$ consists of all $\xi \in \mathbb{T}$ such that the function $z\rightarrow
\left( zI-T\right) ^{-1}x$ $\left( \left\vert z\right\vert >1\right) $ has
no analytic extension to a neighborhood of $\xi $.

\begin{lemma}
Let $T\in B\left( X\right) $, $x\in X$ and assume that $\sup_{n\geq
0}\left\Vert T^{n}x\right\Vert <\infty .$ Then, $\sigma _{T}\left( x\right)
\subseteq \overline{\mathbb{D}}.$
\end{lemma}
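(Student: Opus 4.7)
The plan is to show directly that every $\lambda$ with $|\lambda|>1$ lies in the local resolvent set $\rho_T(x)$, by producing an explicit $X$-valued analytic function on a neighborhood of $\lambda$ that solves the local resolvent equation $(zI-T)u(z)=x$.

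The natural candidate is the formal Laurent-type series
\begin{equation*}
u(z) := \sum_{n=0}^{\infty} \frac{T^n x}{z^{n+1}}.
\end{equation*}
First I would observe that, by hypothesis, $M:=\sup_{n\geq 0}\|T^n x\|<\infty$, so for any fixed $\lambda$ with $|\lambda|>1$ and any $r$ with $1<r<|\lambda|$, the estimate $\|T^n x/z^{n+1}\|\leq M/r^{n+1}$ holds uniformly on the annulus $\{z:|z|\geq r\}$. By the Weierstrass $M$-test, the series converges absolutely and uniformly on compact subsets of $U:=\{z\in\mathbb{C}:|z|>1\}$, hence defines an $X$-valued analytic function $u$ on $U$.

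Next I would verify the local resolvent identity by a straightforward termwise computation: using boundedness and continuity of $T$, one can apply $zI-T$ termwise to obtain
\begin{equation*}
(zI-T)u(z) \;=\; \sum_{n=0}^{\infty}\frac{T^n x}{z^n}\;-\;\sum_{n=0}^{\infty}\frac{T^{n+1}x}{z^{n+1}}\;=\;x
\end{equation*}
for every $z\in U$, since the two series telescope after reindexing. Therefore for any $\lambda$ with $|\lambda|>1$, choosing the neighborhood $U_\lambda :=\{z:|z|>1\}$ itself shows $\lambda\in\rho_T(x)$. Passing to complements, $\sigma_T(x)\subseteq\overline{\mathbb{D}}$, as required.

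There is essentially no main obstacle here: the argument is the local analogue of the standard Neumann-series proof that $\sigma(T)\subseteq\{|z|\leq\|T\|\}$, with $\|T^n x\|$ playing the role of $\|T^n\|$. The only point worth being careful about is the justification for moving the bounded operator $T$ inside the infinite sum, which follows from uniform convergence on compact subsets of $U$ together with continuity of $T$.
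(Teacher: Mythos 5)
Your proposal is correct and is exactly the paper's argument: the paper also takes $u(z)=\sum_{n=0}^{\infty}T^{n}x/z^{n+1}$, notes it is analytic on $\mathbb{C}\setminus\overline{\mathbb{D}}$ and satisfies $(zI-T)u(z)=x$ there, and concludes $\sigma_{T}(x)\subseteq\overline{\mathbb{D}}$. You simply supply the convergence and termwise-manipulation details that the paper leaves implicit.
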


\begin{proof}
Consider the function
\begin{equation*}
u\left( z\right) :=\sum_{n=0}^{\infty }\frac{T^{n}x}{z^{n+1}}
\end{equation*}%
which is analytic on $%
\mathbb{C}
\diagdown \overline{\mathbb{D}}$ and $\left( zI-T\right) u\left( z\right) =x$
for all $z\in
\mathbb{C}
\diagdown \overline{\mathbb{D}}.$ This shows that $%
\mathbb{C}
\diagdown \overline{\mathbb{D}}\subseteq \rho _{T}\left( x\right) $ and
therefore $\sigma _{T}\left( x\right) \subseteq \overline{\mathbb{D}}.$
\end{proof}

We mention the following classical result of Katznelson and Tzafriri \cite[%
Theorem 1]{11}: If $T\in B\left( X\right) $ is power bounded, then $%
\lim_{n\rightarrow \infty }\left\Vert T^{n+1}-T^{n}\right\Vert =0$ if and
only if $\sigma \left( T\right) \cap \mathbb{T}\subseteq \left\{ 1\right\} .$

We have the following local version of the Katznelson-Tzafriri theorem \cite[%
Theorem 4.2]{18}.

\begin{theorem}
Let $T\in B\left( X\right) $, $x\in X$ and assume that $\sup_{n\geq
0}\left\Vert T^{n}x\right\Vert <\infty .$ If $\sigma _{T}\left( x\right)
\cap \mathbb{T}\subseteq \left\{ 1\right\} ,$ then%
\begin{equation*}
\lim_{n\rightarrow \infty }\left\Vert T^{n+1}x-T^{n}x\right\Vert =0.
\end{equation*}
\end{theorem}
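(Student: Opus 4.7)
My plan is to adapt the proof of the classical Katznelson--Tzafriri theorem to the local setting, using the local resolvent as the central object. Set $y := (T-I)x$, so the conclusion becomes $\|T^n y\| = \|T^{n+1}x - T^n x\| \to 0$, and note that the orbit $(T^n y)$ is bounded by $2\sup_k \|T^k x\| < \infty$. The key object is the local resolvent
\[
u(z) := (zI - T)^{-1}x = \sum_{n=0}^{\infty} \frac{T^n x}{z^{n+1}}, \qquad |z|>1,
\]
which satisfies $\|u(z)\| \leq M/(|z|-1)$ for $|z|>1$, with $M := \sup_k \|T^k x\|$. By the hypothesis $\sigma_T(x) \cap \mathbb{T} \subseteq \{1\}$ and the definition of the local spectrum, $u$ extends holomorphically across every point of $\mathbb{T} \setminus \{1\}$; thus $u$ is analytic on $\mathbb{C} \setminus K$ where $K$ is a compact subset of $\overline{\mathbb{D}}$ with $K \cap \mathbb{T} \subseteq \{1\}$.

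For any $r > 1$, Cauchy's integral formula gives
\[
T^n y = \frac{1}{2\pi i}\oint_{|\zeta|=r} \zeta^n (\zeta-1)\,u(\zeta)\,d\zeta.
\]
I would deform this contour through the analyticity region of $u$ to a composite curve $\gamma_n$ that follows an inner circle $|\zeta| = 1 - \delta$ outside a small neighbourhood $V$ of $\zeta = 1$ (chosen so that $K \setminus V$ lies in $\{|z| \leq 1-\delta\}$), and near $\zeta = 1$ detours outward along an arc $|\zeta - 1| = \eta_n$ with $\eta_n \to 0$ at an $n$-dependent rate. On the inner-circle portion $|\zeta^n| \leq (1-\delta)^n \to 0$ while $u$ is bounded on this compact set, so that contribution vanishes as $n \to \infty$. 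On the detour, combining $|\zeta - 1| = \eta_n$, $|\zeta^n| \leq (1+\eta_n)^n$, and $\|u(\zeta)\| \leq M/(|\zeta|-1)$, a judicious choice (e.g.\ $\eta_n = 1/n^2$) keeps the contribution $o(1)$ as well.

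The main obstacle is precisely the detour estimate near $\zeta = 1$: with a fixed detour radius $|\zeta|^n$ grows exponentially, so one must shrink $\eta_n$ with $n$ while staying inside the analyticity region of $u$ and respecting the power-boundedness bound. A cleaner alternative, which avoids the delicate contour surgery, is to recast the problem as a vector-valued Tauberian statement: the bounded sequence $(T^n x)_{n \geq 0} \subseteq X$ has its Beurling (Carleman) spectrum contained in $\sigma_T(x) \cap \mathbb{T} \subseteq \{1\}$, and the vector-valued analogue of the Katznelson--Tzafriri Tauberian theorem applied to the Wiener-algebra function $f(z) = z - 1$ (which vanishes on $\{1\}$) yields $\|T^{n+1}x - T^n x\| \to 0$ directly. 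The proof of that Tauberian statement rests on approximate identities on $\mathbb{T}$ peaked at the exceptional point and on the factorization of elements of the Wiener algebra that vanish at $1$ through the function $(z-1)$.
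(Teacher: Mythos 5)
The paper does not actually prove this statement: Theorem 5.2 is quoted from the author's earlier work \cite[Theorem 4.2]{18}, so there is no in-paper argument to compare against. Judged on its own, your proposal is not yet a proof. The contour route, which you develop in most detail, breaks down exactly at the point you flag, and the fix you suggest ($\eta_n=1/n^2$) does not address the real obstruction. The difficulty is not the outward arc $|\zeta-1|=\eta_n$, $|\zeta|\ge 1$ (there the factor $|\zeta-1|=\eta_n$ and the arc length do give a vanishing contribution); it is the pieces joining that shrinking arc to the fixed inner circle $|\zeta|=1-\delta$. Those pieces must traverse a neighbourhood of the unit circle at distances from $1$ ranging from $\eta_n$ up to the size of $V$. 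Inside the disk near $1$ you have no analyticity at all, since $\sigma_T(x)$ may accumulate at $1$ from inside $\mathbb{D}$ in an arbitrary fashion, and the neighbourhood of $\mathbb{T}\setminus\{1\}$ to which $u$ extends may pinch to zero width as $\zeta\to 1$. On or just outside $\mathbb{T}$ the only a priori bound is $\|u(\zeta)\|\le M/(|\zeta|-1)$, which degenerates on the circle, while the analytic extension across $\mathbb{T}\setminus\{1\}$ carries no quantitative bound as $\zeta\to 1$ along the circle. A direct estimate of the connecting pieces therefore cannot close; this is precisely why the known contour proofs of Katznelson--Tzafriri-type results require an additional device (a Newman--Korevaar-type correcting kernel, or quantitative resolvent growth hypotheses near the exceptional point) that your sketch does not supply.

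Your fallback Tauberian route is the right strategy and is, in substance, how \cite{18} proceeds, but as written it consists of two black boxes. The step that actually uses the hypothesis --- showing that the Beurling/Carleman spectrum of the bounded one-sided orbit $(T^nx)_{n\ge 0}$ (taken modulo $c_0$, or in the appropriate quotient) is contained in $\sigma_T(x)\cap\mathbb{T}$ --- is a genuine theorem, not a routine identification: the naive Carleman transform of the orbit extended by $0$ to $n<0$ has the whole circle as its singular set, and one must work with the reduced spectrum to make the statement true. The second black box, the one-sided vector-valued Tauberian theorem for functions in $A(\mathbb{T})$ with nonnegative Fourier spectrum vanishing on a set of synthesis, is standard but should at least be stated precisely and sourced. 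Until both are supplied, the argument is an outline of the correct proof rather than a proof.
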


Note that in contrast with the Katznelson-Tzafriri theorem, the converse of
Theorem 5.2 does not hold, in general. Indeed, if $S^{\ast }$ is the
backward shift operator on $H^{2},$ then as $\left\Vert S^{\ast
n}f\right\Vert \rightarrow 0,$ we have
\begin{equation*}
\lim_{n\rightarrow \infty }\left\Vert S^{\ast \left( n+1\right) }f-S^{\ast
n}f\right\Vert =0\text{, \ }\forall f\in H^{2}.
\end{equation*}%
On the other hand, since
\begin{equation*}
\left( \lambda I-S^{\ast }\right) ^{-1}f\left( z\right) =\frac{\lambda
^{-1}f\left( \lambda ^{-1}\right) -zf\left( z\right) }{1-\lambda z}\text{ \ }%
\left( \left\vert \lambda \right\vert >1\right) ,
\end{equation*}%
$\sigma _{S^{\ast }}\left( f\right) \cap \mathbb{T}$ consists of all $\xi
\in \mathbb{T}$ for which the function $f$ has no analytic extension to a
neighborhood of $\xi $ (see, \cite[p.24]{6}).

Theorem 5.2 combined with Lemma 4.3 yields the next result.

\begin{theorem}
Assume that $T\in B\left( X\right) $ and $x\in X$ satisfy the following
conditions:

$\left( i\right) $ $\sup_{n\geq 0}\left\Vert T^{n}x\right\Vert <\infty $;

$\left( ii\right) $ $\sigma _{T}\left( x\right) \cap \mathbb{T}\subseteq
\left\{ 1\right\} .$

If the sequence $\left\{ \frac{1}{n}\sum_{i=0}^{n-1}T^{i}x\right\} $
converges strongly to $y\in X,$ then $T^{n}x\rightarrow y$ strongly.
\end{theorem}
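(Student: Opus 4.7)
The plan is to combine Theorem 5.2 with Lemma 4.3(a), exactly as the preceding remark suggests. First, hypotheses (i) and (ii) are precisely those of Theorem 5.2, so that theorem immediately gives $\|T^{n+1}x-T^{n}x\|\to 0$; this is the ``almost invariance'' input required by Lemma 4.3(a).

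Next I would identify the Ces\`aro limit $y$ as a fixed point of $T$. From
\[
T\!\left(\tfrac{1}{n}\sum_{i=0}^{n-1}T^{i}x\right)=\tfrac{1}{n}\sum_{i=0}^{n-1}T^{i}x+\tfrac{1}{n}\bigl(T^{n}x-x\bigr),
\]
together with $\sup_{n}\|T^{n}x\|<\infty$ (which makes $\frac{1}{n}(T^{n}x-x)\to 0$) and the continuity of $T$, one obtains $Ty=y$ in the limit. Setting $z:=x-y$, one has $T^{n}z=T^{n}x-y$, so $\sup_{n}\|T^{n}z\|<\infty$, $\|T^{n+1}z-T^{n}z\|=\|T^{n+1}x-T^{n}x\|\to 0$, and $\frac{1}{n}\sum_{i=0}^{n-1}T^{i}z\to 0$. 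It therefore suffices to prove $T^{n}z\to 0$.

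This last assertion is the content of Lemma 4.3(a) applied to $z$: its Ces\`aro means converge strongly to $0$, so by that lemma the orbit $\{T^{n}z\}$ itself converges strongly to $0$, whence $T^{n}x=T^{n}z+y\to y$, as required. The main, and only, obstacle is that Lemma 4.3(a) is formally stated under the assumption that $T$ be globally power bounded, whereas here we know only $\sup_{n}\|T^{n}x\|<\infty$. Inspection of that lemma's proof shows, however, that power-boundedness is invoked only to control iterates along the orbit of the single vector under consideration and to take norm-closures inside the set $F=\{w\in X:\|T^{n+1}w-T^{n}w\|\to 0\}$; both steps go through under the weaker hypothesis $\sup_{n}\|T^{n}z\|<\infty$ by restricting the argument to the closed $T$-invariant subspace generated by the orbit of $z$, and supplying this bridging is the only subtlety in the proof.
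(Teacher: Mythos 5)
Your route is exactly the paper's: the author's entire proof of this theorem is the one\mbox{-}line remark that Theorem 5.2 combined with Lemma 4.3 yields it, and your first two paragraphs carry this out correctly. Theorem 5.2 gives $\Vert T^{n+1}x-T^{n}x\Vert \rightarrow 0$; the identity $Ta_{n}=a_{n}+\frac{1}{n}\left( T^{n}x-x\right) $ for the Ces\`{a}ro means $a_{n}$, together with the bounded orbit, gives $Ty=y$; and the reduction to $z=x-y$ with $T^{n}z=T^{n}x-y$ is clean. You have also put your finger on the one genuine subtlety, which the paper passes over in silence: Lemma 4.3 is stated for a \emph{power bounded} operator, whereas Theorem 5.3 assumes only $\sup_{n}\Vert T^{n}x\Vert <\infty $.

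The bridge you propose to close that mismatch does not hold up, and this is where your argument has a real gap. Restricting to $Y:=\overline{\text{span}}\left\{ T^{n}z:n\geq 0\right\} $ does not make $T\mid _{Y}$ power bounded: one knows $\sup_{n}\Vert T^{n}w\Vert <\infty $ only for $w$ in the (generally non\mbox{-}closed) linear span of the orbit, and the uniform boundedness principle cannot be invoked on a dense, non\mbox{-}complete subspace, so there is no control on $\sup_{n}\Vert T^{n}\mid _{Y}\Vert $. Moreover, power boundedness enters the proof of Lemma 4.3(a) at precisely the step you need: one has $\Vert T^{n}w\Vert \rightarrow 0$ directly for $w\in \left( T-I\right) F$, but passing to $w$ in the norm closure $\overline{\left( T-I\right) F}$ uses the estimate $\Vert T^{n}w\Vert \leq \Vert T^{n}w_{k}\Vert +\Vert T^{n}\Vert \,\Vert w-w_{k}\Vert $, which requires a uniform bound on the powers. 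In your situation $z$ lies only in such a closure (via $z=\lim_{n}\left( z-\frac{1}{n}\sum_{i=0}^{n-1}T^{i}z\right) $), and the approximants do not come with uniformly bounded orbits, so the argument stalls exactly there; note also that no purely Tauberian escape is available, since for a general bounded sequence $s_{n+1}-s_{n}\rightarrow 0$ plus Ces\`{a}ro convergence does not imply convergence. To be fair, the paper supplies nothing more here either, so you have not missed an idea the author provides; but the sentence claiming that ``both steps go through'' is the entire missing content, and as stated it is not correct.
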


\begin{corollary}
Let $T\in B\left( X\right) $ and let $x\in X$ be such that $\sup_{n\geq
0}\left\Vert T^{n}x\right\Vert <\infty .$ Let
\begin{equation*}
S:=\frac{I+T+...+T^{k-1}}{k}\text{ }(k>1\text{ is a fixed integer})
\end{equation*}
and assume that the sequence $\left\{ \frac{1}{n}\sum_{i=0}^{n-1}S^{i}x%
\right\} $ converges strongly to $y\in X.$ Then, $S^{n}x\rightarrow y$
strongly.
\end{corollary}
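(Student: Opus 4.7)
The plan is to apply Theorem 5.3 to the operator $S$ itself and the vector $x$, so I need to verify the two hypotheses $\sup_{n\ge 0}\|S^nx\|<\infty$ and $\sigma_S(x)\cap\mathbb{T}\subseteq\{1\}$. The first is routine: since $S^n=k^{-n}(I+T+\cdots+T^{k-1})^n$, the vector $S^nx$ is a convex combination of the $T^jx$ for $0\le j\le n(k-1)$, so $\|S^nx\|\le\sup_{j\ge 0}\|T^jx\|<\infty$.

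The substantive step is the local-spectrum inclusion. Set $p(w):=(1+w+\cdots+w^{k-1})/k$, so $S=p(T)$. By Lemma 5.1 applied to $T$, $\sigma_T(x)\subseteq\overline{\mathbb{D}}$. I would then invoke the standard polynomial inclusion $\sigma_{p(T)}(x)\subseteq p(\sigma_T(x))$ for local spectra, together with the elementary fact that $p(\overline{\mathbb{D}})\cap\mathbb{T}=\{1\}$. The latter follows from the triangle inequality: for $|w|\le 1$ one has $|p(w)|\le(1+|w|+\cdots+|w|^{k-1})/k\le 1$, and equality in both places forces $1,w,\ldots,w^{k-1}$ to share a common argument and have unit modulus, hence $w=1$.

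If one prefers to avoid citing the spectral mapping inclusion, it can be established directly at each $\lambda\in\mathbb{T}\setminus\{1\}$ by constructing a local analytic solution $u(z)$ of $(zI-S)u(z)=x$ near $\lambda$. The $k-1$ roots $w_1(\lambda),\ldots,w_{k-1}(\lambda)$ of $p(w)-\lambda=0$ all lie outside $\overline{\mathbb{D}}$ (by the $p(\overline{\mathbb{D}})$ computation) and are simple, since by Gauss--Lucas the critical points of $p$ lie strictly inside $\mathbb{D}$ (the $k-1$ zeros of $p$ are the distinct non-$1$ $k$-th roots of unity). The $X$-valued series $u_x(w):=\sum_{n\ge 0}T^nx/w^{n+1}$ converges on $\mathbb{C}\setminus\overline{\mathbb{D}}$, is analytic there, and satisfies $(wI-T)u_x(w)=x$. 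Writing $q_z(w):=\prod_{j}(w-w_j(z))$, so that $zI-S=-k^{-1}q_z(T)$, the partial fraction identity for $1/q_z(w)$ suggests the candidate
\[
u(z):=k\sum_{j=1}^{k-1}\frac{u_x(w_j(z))}{q_z'(w_j(z))}.
\]
A short residue-theorem computation, which boils down to the identity $\sum_j P(w_j(z))/q_z'(w_j(z))=0$ for polynomials $P$ of degree at most $k-3$ and $\sum_j(1+w_j+\cdots+w_j^{k-2})/q_z'(w_j(z))=1$, verifies $(zI-S)u(z)=x$; analyticity of $u$ in $z$ then follows from the implicit function theorem applied to the simple roots $w_j(z)$. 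This places $\lambda$ in $\rho_S(x)$.

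With both hypotheses in hand, Theorem 5.3 applied to $S$ and $x$ yields $S^nx\to y$ strongly. The main technical obstacle is the local spectral mapping inclusion---equivalently, the residue-based verification of the displayed identity $(zI-S)u(z)=x$; once this algebraic fact is secured, every other ingredient (triangle inequality, Gauss--Lucas, implicit function theorem) is elementary.
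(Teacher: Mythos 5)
Your proposal is correct and follows essentially the same route as the paper: bound $\sup_n\Vert S^nx\Vert$ by $\sup_n\Vert T^nx\Vert$, combine Lemma 5.1 with the local spectral mapping for the polynomial $p$ and the elementary fact that $\vert p(w)\vert<1$ on $\overline{\mathbb{D}}\setminus\{1\}$, and then apply Theorem 5.3 to $S$. The paper simply cites the local spectral mapping theorem (Laursen--Neumann, Theorem 3.3.8) where you offer the inclusion $\sigma_{p(T)}(x)\subseteq p(\sigma_T(x))$ together with an optional direct residue construction, but this is the same argument in substance.
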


\begin{proof}
It is easy to check that
\begin{equation*}
\sup_{n\geq 0}\left\Vert S^{n}x\right\Vert \leq \sup_{n\geq 0}\left\Vert
T^{n}x\right\Vert <\infty .
\end{equation*}%
Notice also that if
\begin{equation*}
f\left( z\right) :=\frac{1+z+...+z^{k-1}}{k}\text{ }\left( z\in
\mathbb{C}
\right) ,
\end{equation*}%
then $f\left( 1\right) =1$ and $\left\vert f\left( z\right) \right\vert <1$
for all $z\in \overline{\mathbb{D}}\diagdown \left\{ 1\right\} .$ On the
other hand, by \cite[Theorem 3.3.8]{13},
\begin{equation*}
\sigma _{S}\left( x\right) =\sigma _{f\left( T\right) }\left( x\right)
=f\left( \sigma _{T}\left( x\right) \right) .
\end{equation*}%
Since $\sigma _{T}\left( x\right) \subseteq \overline{\mathbb{D}}$ (Lemma
5.1), we have $\sigma _{S}\left( x\right) \cap \mathbb{T}\subseteq \left\{
1\right\} .$ By Theorem 5.3, $S^{n}x\rightarrow y$ strongly.
\end{proof}

We put%
\begin{equation*}
D_{+}=\left\{ z\in
\mathbb{C}
:\text{Re}z\geq 1,\text{ Im}z\geq 0\right\} \text{ and }D_{-}=\left\{ z\in
\mathbb{C}
:\text{Re}z\geq 1,\text{ Im}z\leq 0\right\} .
\end{equation*}

As another application of Theorem 5.3, we have the following:

\begin{theorem}
Assume that the operators $A,T,B\in B\left( X\right) $ satisfy the following
conditions:

$\left( i\right) \sup_{n\geq 0}\left\Vert A^{n}TB^{n}\right\Vert <\infty ;$

$\left( ii\right) $ either $\sigma \left( A\right) \subset D_{+}$ and $%
\sigma \left( B\right) \subset D_{-}$ or $\sigma \left( A\right) \subset
D_{-}$ and $\sigma \left( B\right) \subset D_{+}.$

If the sequence $\left\{ \frac{1}{n}\sum_{i=0}^{n-1}A^{i}TB^{i}\right\} $
converges in operator norm to $Q\in B\left( X\right) ,$ then $%
A^{n}TB^{n}\rightarrow Q$ in operator norm.
\end{theorem}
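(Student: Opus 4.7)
The plan is to reduce this directly to Theorem 5.3 applied to the left--right multiplication operator $\mathcal{L}:=L_{A}R_{B}$ on the Banach space $B(X)$ with the element $T\in B(X)$. Since $\mathcal{L}^{n}T=A^{n}TB^{n}$, hypothesis (i) gives $\sup_{n\geq 0}\|\mathcal{L}^{n}T\|<\infty$, and the hypothesis of the theorem reads $\frac{1}{n}\sum_{i=0}^{n-1}\mathcal{L}^{i}T\to Q$ in the norm of $B(X)$. Thus, once I verify that $\sigma_{\mathcal{L}}(T)\cap \mathbb{T}\subseteq \{1\}$, Theorem 5.3 immediately yields $\mathcal{L}^{n}T\to Q$ in norm, which is precisely $A^{n}TB^{n}\to Q$ in operator norm.

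The main step is therefore the spectral one. First I would note that $L_{A}$ and $R_{B}$ are commuting elements of $B(B(X))$ with $\sigma(L_{A})=\sigma(A)$ and $\sigma(R_{B})=\sigma(B)$. Embedding them into a maximal commutative subalgebra of $B(B(X))$ and passing to the Gelfand representation gives the standard inclusion
\begin{equation*}
\sigma(\mathcal{L})=\sigma(L_{A}R_{B})\subseteq \sigma(L_{A})\cdot \sigma(R_{B})=\sigma(A)\cdot \sigma(B).
\end{equation*}
It then suffices to show that $\sigma(A)\cdot \sigma(B)\cap \mathbb{T}\subseteq \{1\}$. For any $z\in D_{+}\cup D_{-}$ we have $|z|^{2}\geq (\mathrm{Re}\,z)^{2}\geq 1$, with $|z|=1$ forcing $\mathrm{Re}\,z=1$ and $\mathrm{Im}\,z=0$, i.e., $z=1$. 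So if $a\in \sigma(A)$ and $b\in \sigma(B)$ satisfy $|ab|=1$, then $|a|=|b|=1$ and hence $a=b=1$; thus $ab=1$. This gives $\sigma(\mathcal{L})\cap \mathbb{T}\subseteq \{1\}$, and since local spectra are contained in the full spectrum, $\sigma_{\mathcal{L}}(T)\cap \mathbb{T}\subseteq \{1\}$ as well.

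With these two ingredients in hand, Theorem 5.3 applied to $\mathcal{L}$ on the Banach space $B(X)$ with the vector $T$ yields the desired conclusion: the Cesàro averages converging in norm to $Q$ force the sequence $\{\mathcal{L}^{n}T\}=\{A^{n}TB^{n}\}$ itself to converge in norm to $Q$.

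I expect the genuinely non-routine step to be the spectral containment $\sigma(L_{A}R_{B})\subseteq \sigma(A)\cdot \sigma(B)$, because it requires the commutativity of $L_{A}$ and $R_{B}$ together with the identification of their spectra on $B(X)$; once this is available, the geometric observation that $D_{\pm}\cap \mathbb{T}=\{1\}$ and the verification of the hypotheses of Theorem 5.3 are essentially bookkeeping.
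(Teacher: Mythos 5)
Your proof is correct and shares its skeleton with the paper's: both reduce the statement to Theorem 5.3 applied to $L_{A}R_{B}$ on $B(X)$ with the vector $T$, so everything rests on verifying $\sigma_{L_{A}R_{B}}(T)\cap \mathbb{T}\subseteq \{1\}$. You verify this differently. The paper invokes Lemma 5.1 (power-boundedness of the orbit gives $\sigma_{L_{A}R_{B}}(T)\subseteq \overline{\mathbb{D}}$) together with the Lumer--Rosenblum identity $\sigma(L_{A}R_{B})=\sigma(A)\sigma(B)$ and the observation that the opposite signs of the imaginary parts in hypothesis (ii) force $\mathrm{Re}(\lambda\mu)=ac-bd\geq ac\geq 1$, i.e.\ $\sigma(L_{A}R_{B})\subseteq \{z:\mathrm{Re}\,z\geq 1\}$; intersecting the two sets gives $\{1\}$. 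You instead use only the inclusion $\sigma(L_{A}R_{B})\subseteq \sigma(A)\sigma(B)$ (which follows from the commutativity of $L_{A}$ and $R_{B}$ and the inverse-closedness of a maximal commutative subalgebra, and needs only the easy inclusions $\sigma(L_{A})\subseteq \sigma(A)$ and $\sigma(R_{B})\subseteq \sigma(B)$) and then compute $\sigma(A)\sigma(B)\cap \mathbb{T}$ directly: $\mathrm{Re}\,z\geq 1$ gives $|z|\geq 1$ with equality only at $z=1$, so a unimodular product forces $a=b=1$. This bypasses Lemma 5.1 and the full strength of the Lumer--Rosenblum theorem, and it incidentally shows that the sign conditions on the imaginary parts in (ii) are not needed for your route: $\sigma(A)\cup\sigma(B)\subseteq \{z:\mathrm{Re}\,z\geq 1\}$ already suffices. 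Both arguments are complete; yours is marginally more elementary and slightly more general.
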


\begin{proof}
Since
\begin{equation*}
\sup_{n\geq 0}\left\Vert \left( L_{A}R_{B}\right) ^{n}T\right\Vert
=\sup_{n\geq 0}\left\Vert A^{n}TB^{n}\right\Vert <\infty ,
\end{equation*}%
by Lemma 5.1,%
\begin{equation*}
\sigma _{L_{A}R_{B}}\left( T\right) \subseteq \overline{\mathbb{D}}.
\end{equation*}%
On the other hand, by the Lumer-Rosenblum theorem \cite[Theorem 10]{14},
\begin{equation*}
\sigma \left( L_{A}R_{B}\right) =\left\{ \lambda \mu :\lambda \in \sigma
\left( A\right) ,\text{ }\mu \in \sigma \left( B\right) \right\}
\end{equation*}%
which implies
\begin{equation*}
\sigma _{L_{A}R_{B}}\left( T\right) \subseteq \sigma \left(
L_{A}R_{B}\right) \subset \left\{ z\in
\mathbb{C}
:\text{Re}z\geq 1\right\} .
\end{equation*}%
Thus we have%
\begin{equation*}
\sigma _{L_{A}R_{B}}\left( T\right) \subseteq \overline{\mathbb{D}}\cap
\left\{ z\in
\mathbb{C}
:\text{Re}z\geq 1\right\} =\left\{ 1\right\} .
\end{equation*}%
Applying Theorem 5.3 to the operator $L_{A}R_{B}$ on the space $B\left(
X\right) $, we obtain that%
\begin{equation*}
A^{n}TB^{n}=\left( L_{A}R_{B}\right) ^{n}T\rightarrow Q\text{ in operator
norm.}
\end{equation*}
\end{proof}

Next, we will show that the hypothesis $\sigma _{T}\left( x\right) \cap
\mathbb{T}\subseteq \left\{ 1\right\} $ in Theorem 5.3 is the best possible,
in general.

Let $N$ be a normal operator on a Hilbert space $H$ with the spectral
measure $P$ and $x\in H.$ Define a measure $\mu _{x}$ on $\sigma \left(
N\right) $ by
\begin{equation}
\mu _{x}\left( \Delta \right) =\langle P\left( \Delta \right) x,x\rangle
=\left\Vert P\left( \Delta \right) x\right\Vert ^{2}\text{.}  \label{5.1}
\end{equation}%
It follows from the Spectral Theorem that $\sigma \left( N\right) =$supp$P$
and $\sigma _{N}\left( x\right) =$supp$\mu _{x}$. It is easy to check that
if $N$ is a contraction (a normal operator is power bounded if and only if
it is a contraction) then,
\begin{equation}
\frac{1}{n}\sum_{i=0}^{n-1}N^{i}x\rightarrow P\left( \left\{ 1\right\}
\right) x\text{ in norm for all }x\in H.  \label{5.2}
\end{equation}

\begin{proposition}
Let $N$ be a normal contraction operator on $H$ with the spectral measure $P$
and $x\in H.$ The sequence $\left\{ N^{n}x\right\} $ converges strongly if
and only if
\begin{equation*}
P\left( \sigma _{N}\left( x\right) \cap \mathbb{T}\diagdown \left\{
1\right\} \right) x=0.
\end{equation*}%
In this case, $N^{n}x\rightarrow P\left( \left\{ 1\right\} \right) x$\
strongly.
\end{proposition}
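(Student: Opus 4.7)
The plan is to pass through the spectral representation, reduce strong convergence of $N^n x$ to pointwise-a.e.\ convergence of $z^n$ against the scalar measure $\mu_x$ on $\sigma(N)\subseteq\overline{\mathbb{D}}$, and then apply the dominated convergence theorem.

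First I would split $x = x_1 + x_2$ where $x_1 := P(\{1\})x$ and $x_2 := P(\sigma(N)\setminus\{1\})x$. Since $Nx_1 = x_1$, we have $N^n x_1 = x_1$, so $N^n x - x_1 = N^n x_2 = \int_{\sigma(N)\setminus\{1\}} z^n\,dP(z)\,x$. Squaring the norm and using (5.1), this gives
\begin{equation*}
\left\Vert N^n x - x_1 \right\Vert^2 = \int_{\sigma(N)\setminus\{1\}} |z|^{2n}\,d\mu_x(z).
\end{equation*}
Because $N$ is a contraction, $|z|\le 1$ on $\sigma(N)$, so the integrand is bounded by $1$ and converges pointwise as $n\to\infty$ to $\chi_{\mathbb{T}\setminus\{1\}}(z)$. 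Dominated convergence then yields
\begin{equation*}
\lim_{n\to\infty}\left\Vert N^n x - x_1\right\Vert^2 = \mu_x(\mathbb{T}\setminus\{1\}) = \left\Vert P(\mathbb{T}\setminus\{1\})x\right\Vert^2.
\end{equation*}
Since $\mu_x$ is supported on $\sigma_N(x)$ (the identity $\sigma_N(x) = \operatorname{supp}\mu_x$ was recorded just before the proposition), the right-hand side equals $\|P(\sigma_N(x)\cap\mathbb{T}\setminus\{1\})x\|^2$.

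From this identity the sufficiency is immediate: if $P(\sigma_N(x)\cap\mathbb{T}\setminus\{1\})x = 0$, then $N^n x\to x_1 = P(\{1\})x$ in norm. For the converse, suppose $N^n x\to y$ strongly. Then the Cesàro averages $\frac{1}{n}\sum_{i=0}^{n-1} N^i x$ also converge strongly to $y$. But by (5.2) these averages converge to $P(\{1\})x = x_1$, hence $y = x_1$, and consequently $\|N^n x - x_1\|\to 0$. Feeding this back into the dominated-convergence identity forces $\|P(\sigma_N(x)\cap\mathbb{T}\setminus\{1\})x\|^2 = 0$, i.e.\ $P(\sigma_N(x)\cap\mathbb{T}\setminus\{1\})x = 0$.

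There is no real obstacle here; the only delicate point is the passage from the full spectral support to the local spectral support $\sigma_N(x)$, which is handled by the already-stated fact $\operatorname{supp}\mu_x = \sigma_N(x)$. Everything else is a straightforward application of the Spectral Theorem together with the previously established Cesàro limit (5.2).
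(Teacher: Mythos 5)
Your proof is correct, but it is organized differently from the paper's. The paper computes
\begin{equation*}
\lim_{n\rightarrow \infty }\left\Vert N^{n+1}x-N^{n}x\right\Vert ^{2}=\int_{\sigma _{N}\left( x\right) \cap \mathbb{T}\setminus \left\{ 1\right\} }\left\vert z-1\right\vert ^{2}d\mu _{x}\left( z\right) ,
\end{equation*}
concludes that the successive differences tend to zero exactly when $\mu _{x}\left( \sigma _{N}\left( x\right) \cap \mathbb{T}\setminus \left\{ 1\right\} \right) =0$, and then invokes Lemma 4.3 together with (5.2) to pass from vanishing differences plus Ces\`{a}ro convergence to convergence of the powers. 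You instead evaluate $\lim_{n}\left\Vert N^{n}x-P\left( \left\{ 1\right\} \right) x\right\Vert ^{2}=\mu _{x}\left( \sigma _{N}\left( x\right) \cap \mathbb{T}\setminus \left\{ 1\right\} \right) $ directly by dominated convergence, which gives sufficiency and the identification of the limit in one stroke; for necessity you still need (5.2) (or some equivalent device) to pin the limit down as $P\left( \left\{ 1\right\} \right) x$ before feeding it back into the identity, and you do this correctly. Your route bypasses the Banach-space Lemma 4.3 entirely and even yields the quantitative formula for $\lim_{n}\left\Vert N^{n}x-P\left( \left\{ 1\right\} \right) x\right\Vert $; the paper's route is less self-contained but deliberately exhibits the proposition as an instance of the Lemma 4.3 / Theorem 5.3 mechanism whose sharpness it is meant to illustrate. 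The one step worth stating explicitly in your write-up is the passage from $\mu _{x_{2}}$ to $\mu _{x}$ restricted to $\sigma \left( N\right) \setminus \left\{ 1\right\} $, i.e. $\mu _{x_{2}}\left( \Delta \right) =\left\Vert P\left( \Delta \right) P\left( \sigma \left( N\right) \setminus \left\{ 1\right\} \right) x\right\Vert ^{2}=\mu _{x}\left( \Delta \setminus \left\{ 1\right\} \right) $, but this is immediate from (5.1).
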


\begin{proof}
Let $\mu _{x}$ be the measure on $\sigma \left( N\right) $ defined by (5.1).
We can write
\begin{eqnarray*}
&&\lim_{n\rightarrow \infty }\left\Vert N^{n+1}x-N^{n}x\right\Vert
^{2}=\lim_{n\rightarrow \infty }\int_{\sigma _{N}\left( x\right) }\left\vert
z^{n+1}-z^{n}\right\vert ^{2}d\mu _{x}\left( z\right) \\
&=&\lim_{n\rightarrow \infty }\int_{\sigma _{N}\left( x\right) \diagdown
\left( \sigma _{N}\left( x\right) \cap \mathbb{T}\right) }\left\vert
z\right\vert ^{2n}\left\vert z-1\right\vert ^{2}d\mu _{x}\left( z\right) \\
&&+\lim_{n\rightarrow \infty }\int_{\sigma _{N}\left( x\right) \cap \mathbb{T%
}}\left\vert z\right\vert ^{2n}\left\vert z-1\right\vert ^{2}d\mu _{x}\left(
z\right) \\
&=&\int_{\sigma _{N}\left( x\right) \cap \mathbb{T}}\left\vert
z-1\right\vert ^{2}d\mu _{x}\left( z\right) =\int_{\sigma _{N}\left(
x\right) \cap \mathbb{T}\diagdown \left\{ 1\right\} }\left\vert
z-1\right\vert ^{2}d\mu _{x}\left( z\right) .
\end{eqnarray*}%
It follows that $\left\Vert N^{n+1}x-N^{n}x\right\Vert \rightarrow 0$ if and
only if
\begin{equation*}
\mu _{x}\left( \sigma _{N}\left( x\right) \cap \mathbb{T}\diagdown \left\{
1\right\} \right) =0.
\end{equation*}%
By Lemma 4.3 the sequence $\left\{ N^{n}x\right\} $ converges strongly if
and only if
\begin{equation*}
P\left( \sigma _{N}\left( x\right) \cap \mathbb{T}\diagdown \left\{
1\right\} \right) x=0.
\end{equation*}%
By (5.2),
\begin{equation*}
\lim_{n\rightarrow \infty }N^{n}x=\lim_{n\rightarrow \infty }\frac{1}{n}%
\sum_{i=0}^{n-1}N^{i}x=P\left( \left\{ 1\right\} \right) x.
\end{equation*}
\end{proof}

Let $W^{\ast }\left( N\right) $ be the von Neumann algebra generated by $N.$
Recall that $x\in H$ is a \textit{separating vector} for $N$ if the only
operator $A$ in $W^{\ast }\left( N\right) $ such that $Ax=0$ is $A=0$. As is
known \cite[Ch.IX, Section 8.1]{4}, each normal operator has a separating
vector. If $x\in H$ is a separating vector for $N,$ then the spectral
measure of $N$ and the measure $\mu _{x}$ are mutually absolutely continuous
\cite[Ch.IX, Proposition 8.3]{4}, where $\mu _{x}$ is defined by (5.1).

\begin{corollary}
If $x$ is a separating vector for $N,$ then the sequence $\left\{
N^{n}x\right\} $ converges strongly if and only if
\begin{equation}
P\left( \sigma _{N}\left( x\right) \cap \mathbb{T}\diagdown \left\{
1\right\} \right) =0.  \label{5.3}
\end{equation}
\end{corollary}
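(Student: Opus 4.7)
The plan is to deduce this corollary directly from Proposition 5.6 by converting the pointwise condition $P(\Delta)x = 0$ (with $\Delta := \sigma_N(x) \cap \mathbb{T} \diagdown \{1\}$) into the operator-level condition $P(\Delta) = 0$ via the separating-vector hypothesis. Proposition 5.6 already does all of the spectral-theoretic work, so the corollary is really just its repackaging under a stronger assumption on $x$.

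Concretely, I would proceed as follows. First invoke Proposition 5.6 to reduce the problem to the equivalence $P(\Delta)x = 0 \Longleftrightarrow P(\Delta) = 0$. The implication from right to left is trivial. For the other direction, I would use that $P(\Delta)$ is a spectral projection of $N$ and therefore belongs to $W^{\ast}(N)$; by the very definition of a separating vector, any operator $A \in W^{\ast}(N)$ with $Ax = 0$ must itself vanish, so $P(\Delta)x = 0$ forces $P(\Delta) = 0$.

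An equivalent route uses the mutual absolute continuity of $P$ and $\mu_x$ recorded just before the statement: by (5.1), $\|P(\Delta)x\|^2 = \mu_x(\Delta)$, so $P(\Delta)x = 0$ is equivalent to $\mu_x(\Delta) = 0$, which by mutual absolute continuity is equivalent to $P(\Delta) = 0$. Either route gives a two-line argument, and there is no real obstacle to overcome; the entire content of the corollary is the observation that, for a separating vector, the pointwise annihilation condition in Proposition 5.6 automatically upgrades to the vanishing of the spectral projection itself.
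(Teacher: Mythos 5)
Your proposal is correct and matches the paper's (implicit) argument: the paper states the corollary immediately after recalling that for a separating vector the spectral measure $P$ and $\mu_x$ are mutually absolutely continuous, which is exactly your second route, and your first route via $P(\Delta)\in W^{\ast}(N)$ is an equivalent rephrasing. Nothing is missing.
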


Now, let $K$ be a compact subset of $\overline{\mathbb{D}}$ such that $1\in
K $ and let $\nu $ be a regular positive Borel measure in $%
\mathbb{C}
$ with support $K.$ Define the operator $N$ on $L^{2}\left( K,\nu \right) $
by $Nf=zf.$ Then, $N$ is a normal contraction on $L^{2}\left( K,\nu \right) $
and $\sigma \left( N\right) =K.$ Moreover,
\begin{equation*}
P\left( \Delta \right) f=\chi _{\Delta }f\text{, \ }\forall f\in L^{2}\left(
K,\nu \right) ,
\end{equation*}%
where $\chi _{\Delta }$ is the characteristic function of $\Delta .$ It can
be seen that the identity one function $\mathbf{1}$ on $K$ is a separating
vector for $N$ and $\sigma \left( N\right) =\sigma _{N}\left( \mathbf{1}%
\right) .$ By (5.3), the sequence $\left\{ N^{n}\mathbf{1}\right\} $
converges strongly if and only if $\chi _{\sigma _{N}\left( \mathbf{1}%
\right) \cap \mathbb{T}}=\chi _{\left\{ 1\right\} }$ or $\sigma _{N}\left(
\mathbf{1}\right) \cap \mathbb{T}=\left\{ 1\right\} .$

\end{document}